\DeclareMathOperator*{\argmin}{argmin}
\title{\LARGE {\bf{Output Observability of Systems Over Finite Alphabets with Linear Internal Dynamics}}}
\author{Donglei Fan and Danielle C. Tarraf
\thanks{The authors were with the Department of Electrical and Computer Engineering at Johns Hopkins University, Baltimore, MD 21218, USA when this research was conducted ({\tt\small dongleifan4@gmail.com, dtarraf@alum.mit.edu}). D.C. Tarraf is a Visiting Scholar at the MIT Institute for Data, Systems and Society.} %
\thanks{This research was supported by AFOSR grant FA9550-16-1-0132, NSF CAREER grant 0954601 and AFOSR Young Investigator grant FA9550-11-1-0118 while the authors were at Johns Hopkins University.}
}
\date{July 10, 2016}
\theoremstyle{remark}     
\theoremstyle{remark}     \newtheorem{lemma}{Lemma}
\theoremstyle{remark}     \newtheorem{mydef}{Definition}
\theoremstyle{remark}     
\theoremstyle{remark}     \newtheorem{theorem}{Theorem}
\theoremstyle{remark}     
\theoremstyle{remark}     \newtheorem*{rmk}{Remark}
\theoremstyle{remark}     \newtheorem{eg}{Example}
\theoremstyle{remark}     
\theoremstyle{remark}     \newtheorem{cor}{Corollary}
\renewcommand{\qedsymbol}{$\blacksquare$}
\begin{document}

\maketitle
\thispagestyle{empty}

\begin{abstract}
We consider a class of systems over finite alphabets with linear internal dynamics, finite-valued control inputs and finitely quantized outputs. 
We motivate the need for a new notion of observability
and propose three new notions of {\it output} observability, thereby shifting our attention to the problem of state estimation for output prediction.
We derive necessary and sufficient conditions for a system to be output observable,
algorithmic procedures to verify these conditions, 
and a construction of {\it finite memory} output observers when certain conditions are met.
We conclude with simple illustrative examples.  

\emph{Index Terms}---Output observability, systems over finite alphabets, quantized outputs, finite memory observers. 
\end{abstract}

\section{Introduction}

We study observability for a class of systems over finite alphabets \cite{DT}, namely systems with linear 
internal dynamics and finitely quantized outputs. 
The plant thus consists of a discrete-time linear time-invariant (LTI) system with  
a finite control input set and a saturating output quantizer that restricts the values of the sensor output signals to fixed, finite sets. 
In our past research on systems over finite alphabets, we introduced a notion of finite state `$\rho/\mu$ approximation' \cite{DT12}.
The idea there is to construct a sequence of deterministic finite state machines that satisfy a set of properties,
thereby constituting approximate models that can be used as the basis for certified-by-design control synthesis \cite{DT11}:
Specifically, a full state feedback control law is first designed for the approximate model to achieve a suitably defined auxiliary performance objective.
This control law is then used, 
together with a copy of the approximate model serving as a {\it finite memory observer} of the plant,
to certifiably close the loop around the system.
This sequence of developments brings to the forefront the 
problem of state estimation for systems over finite alphabets.
The results reported in this manuscript constitute a step towards addressing that problem 
for the specific class of systems considered, namely those with LTI internal dynamics.

Recall that observability generally refers to the ability to determine the initial state of a system from a single observation of its input and output over some finite time interval. In particular, an LTI system is observable if and only if different initial states produce different outputs under zero input \cite{Hespanha}.
Similarly, a nonlinear system is locally observable at $x_o$ if there exists some neighborhood of $x_o$ in which different initial states produce different outputs from that of $x_o$ under every admissible input \cite{hermann1977nonlinear}.

The problem of observability of hybrid systems, 
including switched linear systems \cite{Balluchi, DL, Xie} and quantized-output systems \cite{DD, JS, Raisch}, has been studied in recent years. 
The results in \cite{yuksel2007communication} and \cite{yuksel2006minimum} are also closely related to the problem of state estimation based 
on quantized sensor output information.
However at this time, we are not aware of work on observability of discrete-time systems that involve both switching and output quantization, apart from our work in \cite{DF14} in which we presented a subset of the results in the present manuscript.

The problem of observer design, particularly in a discrete-state setting, has also been studied recently.
For instance, \cite{Murray06} proposed discrete state estimators to estimate the discrete variables in hybrid systems where the continuous variables are available for measurement, while \cite{Topcu15} and \cite{Ozay14} proposed finite-state and locally affine estimators, respectively, for systems whose control specifications are expressed in temporal logic.
 
As we shall see in what follows, the traditional concept of observability does not generalize well to the class of systems of interest. 
Therefore, inspired by our work on $\rho/\mu$ approximations \cite{DT} \cite{DT13}, we propose to shift our attention from state estimation to state estimation for output prediction, emphasizing in particular deterministic finite state machine (DFM) observers. 
The main contributions of this manuscript are as follows:
\begin{enumerate}
\item We motivate the need for a new notion of observability for systems over finite alphabets.
\item Shifting our emphasis from state estimation to state estimation for the purpose of output prediction, we propose three new associated notions:
Finite memory output observability, weak output observability  and asymptotic output observability.
\item We characterize necessary and sufficient conditions for output observability in terms of the parameters of the system for a class of systems over finite alphabets with linear internal dynamics.
\item We propose an algorithm for verifying some of the the sufficient conditions.
\item We propose a constructive procedure for generating finite memory output observers when certain sufficient conditions are met.
\end{enumerate}

\emph{Organization:}
We introduce the class of systems of interest in Section \ref{Sec:Systems}.
We motivate the need for a new notion of observability and propose three new notions of output observability in Section \ref{sec:defn-obs}. 
We investigate these three notions, derive a set of necessary and sufficient conditions, an algorithmic procedure for verifying some of these conditions, and a finite memory observer construction in Sections \ref{sec:FMO} and \ref{sec:WO}.
We present illustrative examples in Section \ref{sec:Eg}
and conclude with directions for future work in Section \ref{Sec:Conclusions}.

\emph{Notation:}
We use $\mathbb{N}$ to denote the non-negative integers, $\mathbb{Z}_+$ to denote the positive integers, $\mathbb{R}_{\ge 0}$ to denote the non-negative reals, and $\mathbb{R}_+$ to denote the positive reals.
We use $ \mathcal{A}^{\mathbb{N}}$ to denote the collection of infinite sequences over set $\mathcal{A}$, that is  $ \mathcal{A}^{\mathbb{N}} = \{ f : \mathbb{N} \to \mathcal{A} \}$.
For $\bf{a} \in \mathcal{A}^{\mathbb{N}}$, we use $a_t$ to denote its $t^{th}$ component. We use $\{a_t\}_{t \in \mathcal{I}}$ to denote the subsequence over index set $\mathcal{I} \subset \mathbb{N}$.
Given a function $f: \mathcal{X} \to \mathcal{Y}$, we use $f^{-1}(y)$ to denote the inverse image of $y \in \mathcal{Y}$ under $f$. For two positive integers $a$ and $b$, we use $a \hspace{-4pt}\mod b$ to denote the remainder of the division of $a$ by $b$.

For $v \in \mathbb{R}^n$, we use $\|v\|$ to denote the Euclidean norm, $\|v\|_1$ to denote the $1$-norm, and $\|v\|_{\infty}$ to denote the infinity norm. 
We say $v$ is bounded if there exists a $b \in \mathbb{R}_{\ge 0}$ such that $\|v\| \le b$.
For a square matrix $A$, we use $\|A\|$ to denote the induced $2$-norm, $\|A\|_1$ to denote the induced $1$-norm, and $\|A\|_{\infty}$ to denote the induced infinity norm. 
We use $\rho(A)$ to denote the spectral radius of $A$, and we say that $A$ is Schur-stable if $\rho(A) < 1$. We say $v$ is a generalized eigenvector of matrix $A$  with corresponding eigenvalue $\lambda$ if $(A - \lambda I)^l v = 0$ but  $(A - \lambda I)^{l-1} v \neq 0$ for some integer $l \ge 1$. 
We use ${\bf 0}$ to represent the zero matrix of appropriate dimensions.
For  $w \in \mathbb{C}^p$, we use $[w]_i$ to denote its $i^{th}$ component and $Re(w)$ to denote the (vector) real part of $w$.

\color{black}
We use $B_r(v)$ and $\overline{B_r(v)}$ to denote the open and closed balls, respectively, centered at $v$ with radius $r$.
For sets $\mathcal{A}$, $\mathcal{B}$ in $\mathbb{R}^n$, we use $|\mathcal{A}|$ denote the cardinality of $\mathcal{A}$ and $d(\mathcal{A}, \mathcal{B}) = \inf \{\|\alpha - \beta\|: \alpha \in \mathcal{A}, \beta \in \mathcal{B}\}$ to denote the distance between sets $\mathcal{A}$ and $\mathcal{B}$. 
Given a finite ordered set $\mathcal{S} = \{s_1, s_2, \dots, s_l\}$ where $l \in \mathbb{Z}_+$, we use $\{s_j\}_{j=1}^l$ to denote  $\mathcal{S}$.

\section{Systems of Interest}
\label{Sec:Systems}

A \emph{system over finite alphabets} is understood to be a set of pairs of signals,
\begin{equation}
\label{eq:Sofa-IO}
P\subset \mathcal{U}^{\mathbb{N}} \times \mathcal{Y}^{\mathbb{N}}, 
\end{equation}
with $|\mathcal{U}| < \infty$ and 
$|\mathcal{Y}| < \infty$.
Essentially, $P$ is a discrete-time system whose input sequences and corresponding feasible output sequences are defined over {\it finite} input and output sets, $\mathcal{U}$ and $\mathcal{Y}$, respectively.
While this definition is quite broad, and we indeed studied these systems in a general setting in \cite{DT},
in this manuscript we are interested in instances where $P$ has underlying dynamics evolving in a continuous state-space described by
\begin{subequations}
\label{eq:Sofa}
\begin{align}
x_{t+1}  &=  f_P(x_t, u_t), \\
y_t  &=  g_P(x_t,   u_t), 
\end{align}
\end{subequations}
where $t \in \mathbb{N}$ is the time index, $x_t \in \mathbb{R}^{n}$ is the state, 
$u_t \in \mathcal{U} \subset \mathbb{R}^m$ is the input ($|\mathcal{U}| < \infty$), $y_t\in \mathcal{Y}  \subset \mathbb{R}^p$ is the output ($|\mathcal{Y}| < \infty$), $f_P: \mathbb{R}^{n} \times \mathcal{U} \to \mathbb{R}^{n} $ is the state transition function and $g_P: \mathbb{R}^{n} \times \mathcal{U} \to \mathcal{Y}$ is the output function. 

\begin{figure}[H]
\centering
\includegraphics[scale = 0.35]{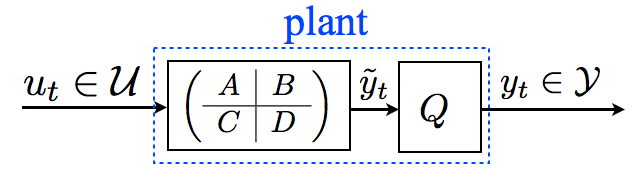} 
\caption{System over finite alphabets with linear internal dynamics \label{Fig:Sofa-LTI}}
\end{figure}

Following a motivating discussion and a set of proposed new definitions for system (\ref{eq:Sofa}), we focus our study in the remainder of this manuscript on special cases where the continuous internal dynamics have the linear structure shown in Figure \ref{Fig:Sofa-LTI} and described by
\begin{subequations}
\label{eq:Sofa-LTI}
\begin{align}
\label{eq:LTI-state}
x_{t+1}  &=  A x_t + B u_t, \\
\label{eq:LTI-out}
\tilde{y}_t  &=  C x_t +  D u_t, \\
\label{eq:Sofa-LTI-qo}
y_t & =  Q(\tilde{y}_t). 
\end{align}
\end{subequations}
As before, $t \in \mathbb{N}$ is the time index, $x_t \in \mathbb{R}^{n}$ is the state, 
$u_t \in \mathcal{U}\subset \mathbb{R}^m$, $|\mathcal{U}|<\infty$, is the input and 
$y_t\in \mathcal{Y} \subset \mathbb{R}^p$, $|\mathcal{Y}|< \infty$, is the quantized output.
$\tilde{y}_t \in \mathbb{R}^p$ is the output of the underlying physical system and matrices $A \in \mathbb{R}^{n \times n}, B \in \mathbb{R}^{n \times m}, C \in \mathbb{R}^{p \times n}$, $ D \in \mathbb{R}^{p \times m}$ are given.
The saturating quantizer $Q: \mathbb{R}^p \to \mathcal{Y} $ is a piecewise-constant function: That is, for any $\tilde{y} \in \mathbb{R}^p$, if $Q$ is continuous at $\tilde{y}$, then there is $\delta > 0$ such that $Q(z) = Q(\tilde{y})$ for all $\|z-\tilde{y}\| < \delta$, $z \in \mathbb{R}^p$ \cite{Caro}.

In particular, when $p=1$ and $Q$ is additionally assumed to be right-continuous, it can be described by:
\begin{equation}
\label{eq:quat1d}
Q((-\infty, \beta_{1})) = y_1, \  Q([\beta_i, \beta_{i+1})) = y_{i+1} \textrm{  for  } i\in \{1,\hdots,|\mathcal{Y}|-1\}, 
\end{equation}
where $\{\beta_i\}_{i=1}^{|\mathcal{Y}|-1}$ are the discontinuous points of $Q$, $\beta_{|\mathcal{Y}|} = \infty$ and $\mathcal{Y}=\{y_1,\hdots,y_{|\mathcal{Y}|}\}$.

\section{Output Observability}
\label{sec:defn-obs}

\subsection{Motivation for a New Notion of Observability}
\label{Sec:Motivation}

A natural starting point in our study is to attempt to apply the definition of LTI system observability 
to systems described by \eqref{eq:Sofa-LTI}. 
Unsurprisingly, we quickly discover that no system in the class  under consideration, or even in the more general class of systems described by \eqref{eq:Sofa},  
is observable in the sense of being amenable to reconstructing its state from an observation of its input and output over some finite time horizon.

\begin{lemma}
\label{prop1}
Consider a system $P$ as in \eqref{eq:Sofa}. 
The initial state of $P$  cannot be uniquely determined by 
knowledge of $(\{u_t\}_{t \in \mathcal{I}}, \{{y}_t\}_{t \in \mathcal{I}})$ over any finite time interval $\mathcal{I} \subset \mathbb{N}$. 
\end{lemma}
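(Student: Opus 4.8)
The plan is to exhibit a one-parameter family of distinct initial states that are indistinguishable under an arbitrary fixed input sequence. The crucial observation is that the output map $g_P$ in \eqref{eq:Sofa} takes values in the \emph{finite} set $\mathcal{Y}$, so each output symbol $y_t$ corresponds to a nonempty preimage $g_P^{-1}(y_t) \cap (\cdot, u_t)$ which is (generically) a set with nonempty interior in $\mathbb{R}^n$, not a single point. Since $n \ge 1$ and the state space is the continuum $\mathbb{R}^n$, no finite amount of finite-alphabet output data can pin down a point.

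Concretely, I would fix an arbitrary finite interval $\mathcal{I} = \{t_0, t_0+1, \dots, t_0 + N\}$ and an arbitrary admissible input sequence $\{u_t\}_{t \in \mathcal{I}}$. Pick any initial state $x_{t_0} = x^{\star}$ and propagate it through \eqref{eq:Sofa} to get a state trajectory $\{x_t^{\star}\}$ and output trajectory $\{y_t^{\star}\}$. Now I want a second initial state $\hat{x}_{t_0} \neq x^{\star}$ producing the identical output string on $\mathcal{I}$. The key step: because $Q$ (or more generally $g_P$) is piecewise constant, for the first time step the set of states mapping to $y_{t_0}^{\star}$ under input $u_{t_0}$ — i.e. the ``cell'' containing $x^{\star}$ — either has nonempty interior or, in degenerate cases, lies in a lower-dimensional set; I would invoke the stated property of $Q$ (continuity at a point $\tilde y$ forces a whole ball around $\tilde y$ to map to the same symbol) to argue that at least along \emph{some} initial state there is a nondegenerate continuum of indistinguishable neighbors, and then carry that continuum forward. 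The cleanest route is to argue by contradiction: suppose some finite $\mathcal{I}$ sufficed for \emph{every} initial state; then in particular the map from $x_{t_0}$ to the output string $\{y_t\}_{t \in \mathcal{I}}$ would be injective on $\mathbb{R}^n$, hence a continuous injection from $\mathbb{R}^n$ into a finite set (with discrete topology) — impossible, since the range is finite while the domain is infinite (indeed uncountable). Even more simply: a function from an infinite set $\mathbb{R}^n$ to the finite set $\mathcal{Y}^{\mathcal{I}}$ cannot be injective by pigeonhole, so two distinct initial states must share an output string under the given input.

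So the heart of the proof is almost purely a cardinality/pigeonhole argument: for any fixed input and any finite $\mathcal{I}$, the map $x_{t_0} \mapsto \{y_t\}_{t\in\mathcal{I}}$ has infinite domain and finite codomain $\mathcal{Y}^{|\mathcal{I}|}$, so it is not injective, and distinct preimages of a common output string are initial states that cannot be told apart. I would phrase it so the conclusion holds for \emph{any} choice of input sequence over $\mathcal{I}$ (not merely existence of a bad input), which is what the lemma asserts, and this is immediate since the pigeonhole step applies to each input separately.

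The main obstacle — really more a matter of care than difficulty — is deciding how strong a statement to prove and keeping the quantifiers straight: the lemma says the initial state cannot be uniquely determined, which I read as ``for every finite $\mathcal{I}$ there exist two distinct initial states and an input over $\mathcal{I}$ (in fact, every input) yielding the same I/O pair.'' The pigeonhole argument delivers exactly this with no need for the piecewise-constant structure of $Q$ beyond knowing $\mathcal{Y}$ is finite; the $Q$-regularity hypothesis would only be needed if one wanted the sharper geometric statement that the indistinguishable set around a typical state is a full-dimensional continuum, which is not required here. I would therefore keep the proof short: fix $\mathcal{I}$ and an input, note $|\mathcal{Y}^{|\mathcal{I}|}| < \infty = |\mathbb{R}^n|$, apply pigeonhole, done.
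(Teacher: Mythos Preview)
Your proposal is correct and lands on essentially the same cardinality argument as the paper: the paper assumes unique determination and derives a surjection from the finite set $\mathcal{U}^T \times \mathcal{Y}^T$ onto the uncountable set $\mathbb{R}^n$, while you fix an input and observe that the map $x_0 \mapsto \{y_t\}_{t\in\mathcal{I}}$ from $\mathbb{R}^n$ into the finite set $\mathcal{Y}^{|\mathcal{I}|}$ cannot be injective---dual formulations of the same pigeonhole obstruction. Your early detour through the piecewise-constant structure of $Q$ is unnecessary (as you yourself note at the end), so the final one-line version you propose matches the paper's proof in spirit and length.
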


\begin{proof}
Let $\mathcal{X}_0$ be the set of all  possible initial states of system (\ref{eq:Sofa}). We have $\mathcal{X}_0 = \mathbb{R}^n $, and hence $\mathcal{X}_0$ is uncountable. 
Now assume that we can uniquely determine any initial condition from the input $u_t$ and output ${y}_t$ over some time interval $\mathcal{I} =  \{0 , \hdots, T\}$ for some $T \in \mathbb{Z}_+$. 
Let $\mathcal{O}$ be the set of all such possible sequences, we have 
$\mathcal{O} \subseteq \mathcal{U}^T \times \mathcal{Y}^T$. 
Since $|\mathcal{U}| < \infty$ and $|\mathcal{Y}| < \infty$,  $\mathcal{U}^T \times \mathcal{Y}^T$ is countable and so is $\mathcal{O}$.
Now by assumption, any initial condition in $ \mathcal{X}_0$ can be uniquely determined by an element in $\mathcal{O}$. 
Equivalently,  there exists a map $\phi: \mathcal{O} \to \mathcal{X}_0$ that is onto. 
This indicates that $\mathcal{X}_0$ is countable (pp. 20, \cite{Caro}),
leading to a contradiction.   
\end{proof}

\begin{rmk} Clearly, Lemma \ref{prop1} holds for system \eqref{eq:Sofa-LTI} which is a special case of system \eqref{eq:Sofa}. Moreover,  
it still holds when the initial state of the system is bounded.  
Specifically, if $ \mathcal{X}_0 = \{x \in \mathbb{R}^n : \|x\| \le b \}$ for some $b \in \mathbb{R}_+$, 
then $\mathcal{X}_0$ is still uncountable and the proof follows unchanged.
\end{rmk}

Lemma \ref{prop1} motivates the  need to think of observability differently for the classes of systems under consideration. 
We propose to shift our focus from the question of ``Can we estimate the state of the system?" (whose answer is clearly no!) to the question of 
``How well can we estimate the output of the system based on our best estimate of the state?". 
Towards that end, we propose in what follows three new notions of \emph{output observability}.

\subsection{Proposed New Notions of Output Observability} 

\begin{figure}[H]
\centering
\includegraphics[scale = 0.3]{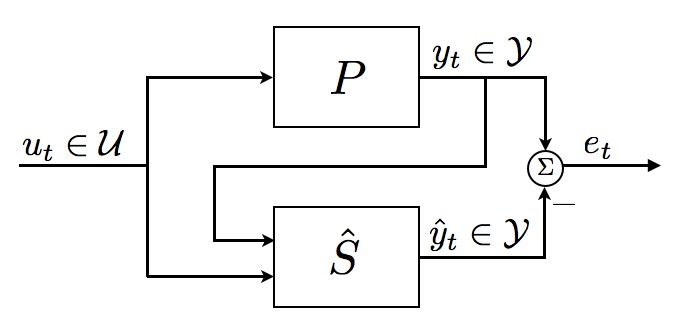} 
\caption{Interconnection of plant and candidate observer. 
\label{Fig:obs-setup}}
\end{figure}

Consider a system over finite alphabet $P$ as defined in \eqref{eq:Sofa} and a discrete-time system $\hat{S}$ as shown in Figure \ref{Fig:obs-setup} and described by: 
\begin{subequations}
\label{eq:observer}
\begin{align}
q_{t+1} &= f(q_t, u_t, {y}_t),\\
\label{eq:obsr_yhat}
\hat{y}_t &= g(q_t, u_t),
\end{align}
\end{subequations}
where $t \in \mathbb{N}$, $q_t \in \mathcal{Q}$ for some state set $\mathcal{Q}$,
$u_t \in \mathcal{U}$, ${y}_t \in \mathcal{Y}$ and $\hat{y}_t \in \mathcal{Y}$. 
 Functions $f: \mathcal{Q} \times \mathcal{U} \times \mathcal{Y} \to \mathcal{Q}$ and $g: \mathcal{Q} \times \mathcal{U} \to \mathcal{Y}$ are given. 
 We say that $\hat{S}$ is a \emph{candidate observer} for $P$.
 Indeed, the setup shown in Figure \ref{Fig:obs-setup} is reminiscent of the classical observer setup. The current state $q_t$ of $\hat{S}$ represents an estimate\footnote{Note that this estimate does not need to live in the state-space of $P$: That is, $\mathcal{Q} \neq \mathbb{R}^n$ in general. For instance, one could be interested in a set-valued estimate of the state as is the case in some of our complementary work \cite{DT11, DT13}.} of the state of $P$ based on observations of its past input and output signals over a (finite) time horizon. 
 The output $\hat{y}_t$ is an estimate of the output of $P$, generated by $\hat{S}$ based on its current state estimate and knowledge of the input: Note in this case that, as is typical in an observer setup, no direct feedthrough from $y_t$ is allowed in (\ref{eq:obsr_yhat}). 
 The error term $e_t$ thus measures the difference between the real output $y_t$ and its estimate $\hat{y}_t$. 
 
 We are now ready to introduce a quantity $\gamma$ that characterizes the quality of a candidate observer $\hat{S}$ as judged by the quality of its estimates of the output, and to propose three associated new notions of output observability of systems over finite alphabets.

\begin{mydef}
\label{def:obsr-gain}
Consider a system $P$ as in \eqref{eq:Sofa} and a candidate observer $\hat{S}$ as in \eqref{eq:observer}, interconnected as shown in Figure \ref{Fig:obs-setup}.  We say $\gamma \in \mathbb{R}_{\ge 0}$ is an \emph{observation gain bound} of the pair $(P, \hat{S})$ if for any $({\bf u}, {\bf y}) \in P$,
\begin{equation}
\label{eq:obsr-gain}
\sup_{T \ge 0} \sum_{t = 0}^T  \| y_t - \hat{y}_t \| - \gamma \|u_t\| < \infty.
\end{equation}
\end{mydef}

Note that $\gamma$ defined in \eqref{eq:obsr-gain} is in accordance with the concept of finite-gain $\mathcal{L}_2$ stability 
\cite{Khalil} of the interconnected system with input $u_t$ and output $e_t$ shown in Figure \ref{Fig:obs-setup}.

\begin{mydef}
Consider a system $P$ as in \eqref{eq:Sofa}. The \emph{$\mathcal{O}$-gain}, $\gamma^*$, of $P$ is defined as:
\begin{equation}
\label{eq:o-gain}
\gamma^* = \inf_{\hat{S} \ \textrm{as in \eqref{eq:observer}} }\Big\{ \gamma \ \textrm{is an observation gain bound of $(P, \hat{S})$}\Big\}.
\end{equation}
\end{mydef}

\begin{mydef} 
\label{def:class-C}
Consider a system $P$ as in \eqref{eq:Sofa}: 
\begin{enumerate}[label=(\alph*)]
\item $P$ is \emph{finite memory output observable} if there exists a candidate observer $\hat{S}$ as in \eqref{eq:observer} and a $T \in \mathbb{Z}_+$ such that for any $({\bf u}, {\bf y}) \in P$, $e_t = 0$ for all $t \ge T$ when $P$ and $\hat{S}$ are interconnected as shown in Figure \ref{Fig:obs-setup}.
\item $P$ is \emph{weakly output observable} if there exists a candidate observer $\hat{S}$ as in \eqref{eq:observer} such that  $\gamma = 0$ is an observation gain bound of $(P, \hat{S})$. 
\item $P$ is \emph{asymptotically output observable} if the $\mathcal{O}$-gain  of $P$ is $\gamma^* = 0$.
\end{enumerate}
\end{mydef}

The three proposed notions satisfy a hierarchy:

\begin{lemma}
\label{lem:ClassCimp}
Consider a system $P$ as in \eqref{eq:Sofa}. We have : $P$ is finite memory output observable $\Rightarrow$ $P$ is weakly output observable $\Rightarrow$ $P$ is asymptotically output observable.
\end{lemma}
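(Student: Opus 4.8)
The plan is to unwind the three definitions and check the two implications separately; both are short, and neither uses any structure beyond that of \eqref{eq:Sofa}.

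\emph{Finite memory output observable $\Rightarrow$ weakly output observable.} Suppose $P$ is finite memory output observable, witnessed by a candidate observer $\hat S$ as in \eqref{eq:observer} and an integer $T \in \mathbb{Z}_+$. I would show that this \emph{same} $\hat S$ witnesses weak output observability, i.e.\ that $\gamma = 0$ is an observation gain bound of $(P,\hat S)$ in the sense of Definition \ref{def:obsr-gain}. Fix any $({\bf u},{\bf y}) \in P$. By hypothesis $e_t = y_t - \hat y_t = 0$ for all $t \ge T$, so for every $T' \ge 0$ one has $\sum_{t=0}^{T'} \big(\|y_t - \hat y_t\| - 0\cdot\|u_t\|\big) = \sum_{t=0}^{\min(T',\,T-1)} \|y_t - \hat y_t\| \le \sum_{t=0}^{T-1}\|y_t - \hat y_t\|$, and the right-hand side is a finite sum of real numbers, hence finite and independent of $T'$. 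Taking the supremum over $T' \ge 0$ yields \eqref{eq:obsr-gain} with $\gamma = 0$, which is exactly Definition \ref{def:class-C}(b).

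\emph{Weakly output observable $\Rightarrow$ asymptotically output observable.} Suppose $P$ is weakly output observable, so there is a candidate observer $\hat S$ for which $0$ is an observation gain bound of $(P,\hat S)$. Then $0$ is one of the values over which the infimum in \eqref{eq:o-gain} is taken, so $\gamma^* \le 0$; and since, by Definition \ref{def:obsr-gain}, every observation gain bound lies in $\mathbb{R}_{\ge 0}$, the set in \eqref{eq:o-gain} contains no negative numbers, whence $\gamma^* \ge 0$. Therefore $\gamma^* = 0$, which is Definition \ref{def:class-C}(c).

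I do not expect a real obstacle here; the only point that warrants a moment's attention is the finiteness claim in the first implication, where one observes that a finite sum of norms of points in the finite sets $\mathcal{Y}$ (for both $y_t$ and $\hat y_t$) is finite, and that subtracting the non-positive terms $-0\cdot\|u_t\|$ cannot spoil this. In particular the argument invokes nothing about the LTI data $(A,B,C,D)$ or the quantizer $Q$, so the lemma is genuinely a statement at the level of generality of \eqref{eq:Sofa}.
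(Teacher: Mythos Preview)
Your proof is correct and follows essentially the same approach as the paper's: use the finite-memory observer itself to witness $\gamma=0$ via boundedness of the tail-truncated sum, and then observe that $\gamma^*$ is the infimum of a set of nonnegative numbers containing $0$. Your write-up is slightly more detailed but the logic is identical.
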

\begin{proof}
If $P$ is finite memory output observable, there is a candidate observer $\hat{S}$ such that $y_t - \hat{y}_t = 0$ for all $t \ge T$, and therefore $\sup_{T \ge 0} \sum_{t = 0}^T  \| y_t - \hat{y}_t \| < \infty$. Thus $\gamma = 0$ is an observation gain bound of $(P, \hat{S})$ and $P$ is  weakly output observable. If $P$ is  weakly output observable, then $\gamma^*$ is the infimum of a set of non-negative numbers containing 0, and hence $\gamma^*=0$.
\end{proof}

\begin{rmk}
The converse statements in Lemma \ref{lem:ClassCimp} 
are not necessarily true. Indeed, when $\mathcal{Y}$ is finite,
we have $\gamma = 0$ if and only if for any $({\bf u}, {\bf y}) \in P$, $\hat{y}_t \ne y_t$ at most finitely many times. 
In this case, for every choice of $({\bf u}, {\bf y}) \in P$, $\hat{y}_t=y_t$ for all $t \ge T_{({\bf u}, {\bf y})}$ for some $T_{({\bf u}, {\bf y})} \in \mathbb{Z}_+$, but there may not be a uniform bound on $T_{({\bf u}, {\bf y})}$. Hence $P$ may be weakly output observable but not finite memory output observable. Likewise, we may be able to find candidate observers $\hat{S}$ such that $(P,\hat{S})$ has observation gain bounds arbitrarily close to 0 without being equal to it. Hence $P$ may be asymptotically output observable but not weakly output observable.
\end{rmk}

\color{black}
\section{Finite Memory Output Observability}
\label{sec:FMO}

In this section, we propose (Section \ref{SSec:MainFMO}) and derive (Section \ref{sec:Deri-C1}) a set of conditions characterizing finite memory output observability, and we propose an algorithmic procedure (Section \ref{sec:Algo-C1}) for verifying some of these conditions.

We begin with some relevant definitions and notation. 
Given a system over finite alphabets as in \eqref{eq:Sofa-LTI}, we will use $F({\bf u},t)$ to denote the forced response of the underlying LTI system at time $t$ under input $\bf{u}$, 
$\mathcal{A}$ to denote the set of all the possible values of this forced response,
and $\mathcal{B}$ to denote the set of all the discontinuous points of the quantizer.
That is:
$$F({\bf u},t) = \sum_{\tau = 0}^{t-1} CA^{t-1-\tau}Bu_\tau + Du_t,$$
\begin{equation}
\label{eq:set-A}
\mathcal{A} = \{\alpha \in \mathbb{R}^p : \alpha = F({\bf u},t),  {\bf u} \in \mathcal{U}^{\mathbb{N}}, t \in \mathbb{N} \},
\end{equation}
\begin{equation}
\label{eq:set-B}
\mathcal{B} = \{ \beta \in \mathbb{R}^p: Q(\tilde{y})~ \textrm{is discontinuous at} ~ \tilde{y} = \beta \}.
\end{equation}

\subsection{Main Results}
\label{SSec:MainFMO}
 
\color{black}
We are now ready to propose both necessary conditions and  sufficient conditions for finite memory output observability of system \eqref{eq:Sofa-LTI}. We begin with sufficient conditions.

\begin{theorem}
\label{prop:SC-2-C1}
Consider a system $P$  as in \eqref{eq:Sofa-LTI}.  If $CA^l = {\bf 0}$ for some $l \in \mathbb{Z}_{+}$, then $P$ is finite memory output observable.
\end{theorem}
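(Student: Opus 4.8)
The plan is to exhibit an explicit finite memory observer. First I would unroll the state recursion \eqref{eq:LTI-state} to get $x_t = A^t x_0 + \sum_{\tau=0}^{t-1} A^{t-1-\tau} B u_\tau$, so that \eqref{eq:LTI-out} gives $\tilde{y}_t = CA^t x_0 + F({\bf u},t)$. The crux is that $CA^l = {\bf 0}$ forces $CA^t = CA^l A^{t-l} = {\bf 0}$ for every $t \ge l$; hence for all $t \ge l$ the dependence on $x_0$ vanishes and $\tilde{y}_t = F({\bf u},t)$, so $y_t = Q(F({\bf u},t))$ is a function of the input alone. Moreover, since $CA^{t-1-\tau}B = {\bf 0}$ whenever $t-1-\tau \ge l$, for $t \ge l$ this simplifies further to $F({\bf u},t) = \sum_{j=1}^{l} CA^{j-1}B\, u_{t-j} + D u_t$, i.e. it depends only on the $l+1$ most recent inputs.

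Next I would construct the candidate observer \eqref{eq:observer} as a shift register storing the last $l$ inputs. Take $\mathcal{Q} = \mathcal{U}^l$, which is finite because $|\mathcal{U}| < \infty$ (so $\hat{S}$ is genuinely a finite memory / DFM observer). Let $f$ ignore $y_t$ and perform the shift, $f\bigl((v_1,\dots,v_l), u, y\bigr) = (v_2,\dots,v_l,u)$, and let $g\bigl((v_1,\dots,v_l), u\bigr) = Q\bigl(\sum_{j=1}^{l} CA^{j-1}B\, v_{l-j+1} + D u\bigr)$, noting this respects the no-feedthrough requirement in \eqref{eq:obsr_yhat}. Initialize $q_0$ to any element of $\mathcal{Q}$.

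Finally I would verify correctness by a short induction: for $t \ge l$ the register contents are exactly $q_t = (u_{t-l}, u_{t-l+1}, \dots, u_{t-1})$, regardless of the arbitrary initialization, so $\hat{y}_t = Q(F({\bf u},t)) = Q(\tilde{y}_t) = y_t$, hence $e_t = 0$ for all $t \ge l$; taking $T = l \in \mathbb{Z}_+$ establishes finite memory output observability. There is no deep obstacle here — the only care needed is the index bookkeeping in defining $g$ (matching register positions to the coefficients $CA^{j-1}B$), checking that $\mathcal{Q}$ is finite, and confirming that the harmless startup transient for $t < l$ does not matter since the definition only requires $e_t = 0$ for $t \ge T$.
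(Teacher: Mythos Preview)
Your proposal is correct and follows essentially the same approach as the paper: both unroll the state recursion, use $CA^l = {\bf 0}$ to kill the $x_0$-dependence and truncate the forced response to the last $l$ inputs, and then build a shift-register observer storing those inputs. The paper packages the shift register as its ``Finite Input Observer Construction'' (Definition \ref{def:FIIC}) with parameter $T = l$, whereas you write it out directly on $\mathcal{Q} = \mathcal{U}^l$; your index bookkeeping in $g$ checks out.
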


\begin{theorem}
\label{prop:SC-C1-G}
Consider a system $P$ as in \eqref{eq:Sofa-LTI}, assume that  the initial state $x_0$ is bounded. Let  $\mathcal{E}$ be the collection of generalized eigenvectors of $A$ whose corresponding  eigenvalues have magnitudes   greater than or equal to 1. If $d(\mathcal{A}, \mathcal{B}) \neq 0$, and $\mathcal{E}$ is in the kernel of $C$, then  $P$ is finite memory output observable. 
\end{theorem}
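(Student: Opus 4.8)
The plan is to decompose the state into a "stable part" and an "unstable/marginal part," show that the output depends only on the stable part after a transient, and then exhibit a finite memory observer that exactly tracks the relevant information. Write $\mathbb{R}^n = \mathcal{V}_s \oplus \mathcal{V}_u$ where $\mathcal{V}_u$ is the span of the generalized eigenvectors of $A$ with eigenvalue magnitude $\ge 1$ (this is exactly the set $\mathcal{E}$ spans) and $\mathcal{V}_s$ is the complementary $A$-invariant subspace associated with eigenvalues of magnitude $< 1$. In coordinates adapted to this splitting, the state transition $x_{t+1} = Ax_t + Bu_t$ block-decomposes, and by hypothesis $\mathcal{V}_u \subseteq \ker C$, so the quantizer input is $\tilde{y}_t = Cx_t + Du_t = C x_t^{(s)} + Du_t$, depending only on the stable component $x_t^{(s)}$ and the current input. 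The stable component evolves as $x_{t+1}^{(s)} = A_s x_t^{(s)} + B_s u_t$ with $\rho(A_s) < 1$.

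Next I would exploit Schur-stability of $A_s$ together with the boundedness of $x_0$ and the finiteness of $\mathcal{U}$: the homogeneous part $A_s^t x_0^{(s)}$ decays geometrically, so $x_t^{(s)} = A_s^t x_0^{(s)} + F_s({\bf u},t)$ where $F_s({\bf u},t)$ is the (bounded) forced response in the stable coordinates. Correspondingly $\tilde{y}_t = C\big(A_s^t x_0^{(s)}\big) + F({\bf u},t)$, where $F({\bf u},t)$ is precisely the forced response defined before the statement (note $C A^t = C$ restricted to stable part since the unstable part is killed). Here is where $d(\mathcal{A},\mathcal{B}) \neq 0$ enters: let $\epsilon = d(\mathcal{A},\mathcal{B}) > 0$. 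Since $F({\bf u},t) \in \mathcal{A}$, every point $F({\bf u},t)$ sits at distance at least $\epsilon$ from every discontinuity point of $Q$. Because $x_0^{(s)}$ lies in a bounded set and $\|CA_s^t\| \to 0$, there is a single $T \in \mathbb{Z}_+$ — depending only on the bound on $x_0$, on $\|C\|$, and on the decay rate of $A_s$, not on the particular trajectory — such that $\|C A_s^t x_0^{(s)}\| < \epsilon$ for all $t \ge T$. Thus for $t \ge T$, $\tilde{y}_t$ lies in the open ball $B_\epsilon(F({\bf u},t))$, and since $Q$ is constant on that ball (the ball avoids $\mathcal{B}$), we get $y_t = Q(\tilde{y}_t) = Q(F({\bf u},t))$ for all $t \ge T$.

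Finally I would construct the observer $\hat{S}$: it simulates the forced response $F({\bf u},t)$ by running the recursion $\xi_{t+1} = A_s \xi_t + B_s u_t$ with $\xi_0 = 0$ (a finite-dimensional deterministic update driven by $u_t \in \mathcal{U}$), and outputs $\hat{y}_t = Q(C\xi_t + Du_t) = Q(F({\bf u},t))$. By the previous paragraph, $\hat{y}_t = y_t$ for all $t \ge T$, with $T$ uniform over all $({\bf u},{\bf y}) \in P$, which is exactly the definition of finite memory output observability in Definition \ref{def:class-C}(a). One technical point to handle carefully: strictly speaking the observer state set $\mathcal{Q}$ in \eqref{eq:observer} is abstract, so using $\xi_t \in \mathbb{R}^{\dim \mathcal{V}_s}$ is allowed; if one wants a genuinely finite state set the forced responses $F({\bf u},t)$ over all inputs need not take finitely many values in general, but the theorem only asserts finite memory output observability in the sense of the definition, which the construction meets. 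The main obstacle is making the uniformity of $T$ rigorous — i.e., pinning down an explicit bound like $\|CA_s^t\| \le M \lambda^t$ (via a Jordan/Schur-form estimate or submultiplicativity after passing to an equivalent norm in which $\|A_s\| < 1$) and combining it with the uniform bound on $\|x_0^{(s)}\|$ to produce a trajectory-independent threshold; everything else is bookkeeping with the invariant splitting and the definition of $d(\mathcal{A},\mathcal{B})$.
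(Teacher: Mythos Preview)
Your argument is correct and follows essentially the same skeleton as the paper --- split off the unstable generalized eigenspace (which $C$ annihilates), use Schur-stability of the remaining block to drive the free response below $d(\mathcal{A},\mathcal{B})$, and conclude that $Q(\tilde y_t)=Q(F({\bf u},t))$ for $t\ge T$ --- but the execution differs in two respects worth noting. First, the paper carries out the decomposition via explicit Jordan-form bookkeeping (Lemmas \ref{lem:Jordisss} and \ref{lem:tytindab2bunst}), bounding $\|CA^T x_t\|$ by estimating powers of Jordan blocks term by term; your invariant-subspace formulation is cleaner and sidesteps those computations, though you should still justify that $Q$ is constant on the ball $B_\epsilon(\alpha)$ (this is the content of Lemma \ref{lem:prop-of-Q} and needs a short connectedness/bisection argument, not just ``the ball avoids $\mathcal{B}$''). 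Second, and more substantively, the paper's observer (Definition \ref{def:FIIC}) stores only the last $T$ inputs and outputs $Q$ of the \emph{truncated} forced response $\sum_{\tau=t-T}^{t-1}CA^{t-1-\tau}Bu_\tau+Du_t$, which is itself an element of $\mathcal{A}$; this yields a genuinely finite-state observer with $|\mathcal{Q}|\le 1+\sum_{i=1}^T|\mathcal{U}|^i$, in line with the paper's emphasis on finite-memory/DFM observers. Your observer simulates the full forced response via $\xi_{t+1}=A_s\xi_t+B_s u_t$, which is a valid candidate observer under Definition \ref{def:class-C}(a) (since $\mathcal{Q}$ is unrestricted there) but has an infinite state set. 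Both constructions prove the theorem as stated; the paper's buys the finite-state conclusion highlighted in the Remark following Theorem \ref{prop:SC-C1-G}.
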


{Intuitively, if the hypothesis in Theorem \ref{prop:SC-2-C1} is satisfied, then the initial state has no impact on the quantized output for large enough time. 
We can therefore determine the output based on past input information, and the system is finite memory output observable.}
Theorem \ref{prop:SC-C1-G} states that if any forced response is at some distance away from the discontinuous points of the quantizer, then the influence of the initial state in the quantized output will eventually disappear, and the knowledge of the past input suffices to predict the output. 
The assumption that $\mathcal{E}$ is in the kernel of $C$ simply means that the (possible) unstable modes of the underlying LTI system do not influence the quantized output.

\begin{rmk}
If the sufficient conditions in Theorem \ref{prop:SC-2-C1} or Theorem \ref{prop:SC-C1-G} are satisfied, then a finite state (or equivalently, finite memory) observer can be constructed to achieve error-free output prediction for large enough time large. We present such a construction (Finite Input Observer Construction) in Section \ref{sec:Deri-C1} . 
\end{rmk}

We next propose  necessary conditions for finite memory output observability. We begin with the case of stable internal dynamics. 

\begin{theorem}
\label{prop:NC-C1}
Consider a system $P$   as in \eqref{eq:Sofa-LTI},   assume that $\rho(A)<1$, $0 \in \mathcal{U}$, and $0 \notin \mathcal{B}$. If $rank(CA^l) = p$ for all $l \in \mathbb{Z}_{+}$, and $\mathcal{A} \cap \mathcal{B} \neq \varnothing$, then $P$ is not finite memory output observable.
\end{theorem}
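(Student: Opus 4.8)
The plan is to prove the contrapositive by a counting argument, constructing a family of input-output pairs that cannot be distinguished at a fixed horizon by any finite memory observer. Fix $\beta \in \mathcal{A} \cap \mathcal{B}$, so there is some input $\bf{u}^\star$ and some time $\tau$ with $F(\bf{u}^\star, \tau) = \beta$. Since $\beta \in \mathcal{B}$, the quantizer $Q$ is discontinuous at $\beta$: there are points arbitrarily close to $\beta$ mapping to (at least) two distinct output values $y', y''$. Because $0 \in \mathcal{U}$, we may pad $\bf{u}^\star$ with zeros after time $\tau$; then for $t \ge \tau$ the forced response continues to evolve under the free dynamics of $A$, and since $\rho(A) < 1$ it converges, so $F(\bf{u}^\star, t) \to$ some limit and in fact we can keep the analysis at the single critical time $\tau$ (or arrange, by the rank hypothesis, that $\tilde{y}_\tau = CA^\tau x_0 + \beta$ takes a continuum of values as $x_0$ ranges over a neighborhood).

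The key step is the following dichotomy. Suppose, for contradiction, that $P$ is finite memory output observable, witnessed by an observer $\hat{S}$ as in \eqref{eq:observer} and a horizon $T \in \mathbb{Z}_+$, so that $e_t = 0$ for all $t \ge T$ on every pair in $P$. Consider the input $\bf{u}$ obtained from $\bf{u}^\star$ by inserting enough leading zeros (and trailing zeros) so that the critical time $\tau$ exceeds $T$; concretely, use the time-shift $\tau' \ge T$ at which the forced response hits (a point near) $\beta$. Since $rank(CA^{\tau'}) = p$, the map $x_0 \mapsto CA^{\tau'} x_0$ is onto $\mathbb{R}^p$, so as $x_0$ ranges over an arbitrarily small ball (allowed, since $x_0$ is only assumed bounded here — wait, $x_0$ ranges over all of $\mathbb{R}^n$ in \eqref{eq:Sofa-LTI}, even better), $\tilde{y}_{\tau'} = CA^{\tau'}x_0 + \beta$ ranges over a neighborhood of $\beta$. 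By discontinuity of $Q$ at $\beta$, we can pick $x_0^{(1)}$ and $x_0^{(2)}$ so that $y_{\tau'}^{(1)} = Q(\tilde{y}_{\tau'}^{(1)}) \ne Q(\tilde{y}_{\tau'}^{(2)}) = y_{\tau'}^{(2)}$. Now the subtlety: the observer reads $y_t$ as an input, so the two runs need not be indistinguishable all the way up to $\tau'$. To close this, I would push the discontinuity argument to be generic — using $0 \notin \mathcal{B}$ together with $\rho(A)<1$ to ensure that when the input is zero the forced response stays away from $\mathcal{B}$, hence the output under zero input is eventually constant and independent of $x_0$, so the two runs agree on $\bf{y}$ for all $t$ in $[T, \tau')$ regardless of the initial state; at time $\tau'$ they necessarily disagree. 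Thus $\hat{S}$ receives identical information $(u_t, y_t)_{t < \tau'}$ on both runs, so produces the same $q_{\tau'}$ and hence the same $\hat{y}_{\tau'}$, which cannot equal both $y_{\tau'}^{(1)}$ and $y_{\tau'}^{(2)}$ — contradicting $e_{\tau'} = 0$.

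The main obstacle I anticipate is precisely the bookkeeping in the previous paragraph: making rigorous that the two runs are truly indistinguishable to the observer up to the critical time. This requires (i) choosing the input so that on $[0,\tau')$ the forced response avoids $\mathcal{B}$ except possibly at isolated earlier points where the initial-state contribution $CA^t x_0$ is still large — which is where $\rho(A)<1$ and the freedom to insert many leading zeros help, since $\|CA^t x_0\| \to 0$; and (ii) arranging that at time $\tau'$ the nominal point is exactly $\beta \in \mathcal{B}$ so that tiny perturbations from $CA^{\tau'} x_0$ genuinely straddle the discontinuity. I would handle (i) by first running zero input long enough that $CA^t x_0$ is negligible for all $t$ past some $T_0 \ge T$, then applying the finite input word realizing $\beta$, and noting the rank condition guarantees the perturbation at the final step is nondegenerate. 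I would also double-check the edge case where the discontinuity of $Q$ at $\beta$ is one-sided relative to the direction of the achievable perturbations, using surjectivity of $CA^{\tau'}$ to get perturbations in every direction.
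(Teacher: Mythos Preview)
Your overall strategy matches the paper's proof: assume an observer $\hat{S}$ with horizon $T$, pad the input realizing $\beta \in \mathcal{A}\cap\mathcal{B}$ with leading zeros so the critical time lands at (or after) $T$, use $\mathrm{rank}(CA^T)=p$ to produce two initial states whose quantized outputs first diverge exactly at that time, and contradict $e_t=0$. That skeleton is correct and is what the paper does.

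There is, however, a genuine gap in your handling of obstacle~(i). Two separate things must hold on $[0,\tau')$: (a) the \emph{forced response} $F(\mathbf{u},t)$ stays away from $\mathcal{B}$, and (b) the perturbation $CA^t x_0$ is small enough not to push $\tilde{y}_t$ across a discontinuity. You conflate these. Your proposed fix---``run zero input long enough that $CA^t x_0$ is negligible for $t\ge T_0$''---addresses only (b), and only at late times; it says nothing about (a) during the segment where the nonzero word $\mathbf{u}^\star$ is applied. The values $F(\mathbf{u}^\star,r)$ for $0\le r<\tau$ depend solely on the input, not on $x_0$ or on how many leading zeros you prepend, so no amount of padding keeps them out of $\mathcal{B}$. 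The paper closes (a) by choosing $t_1=\min\{t:F(\mathbf{u},t)\in\mathcal{A}\cap\mathcal{B}\}$ via well-ordering; minimality then guarantees $F(\mathbf{u}^1,r)\notin\mathcal{B}$ for all $r<t_1$, and together with $0\notin\mathcal{B}$ this yields a strictly positive distance $d_1$ along the entire prefix.

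For (b) the paper does not rely on $\|A^t\|$ being small for large $t$; it uses the uniform bound $K_A=\sup_{t\ge 0}\|A^t\|$ (finite since $\rho(A)<1$) and the discontinuity of $Q$ at $\beta$ to choose the target perturbation $z$ with $\|z\|<d_1/(2\|V^{-1}\|\|C\|K_A)$, then lifts $z$ through a right inverse $V^{-1}$ of $CA^T$ to obtain $x^*$ with $\|x^*\|\le\|V^{-1}\|\|z\|$. This gives $\|CA^t x^*\|<d_1/2$ for \emph{every} $t$, not just late ones. Your ``arbitrarily small ball'' remark is the right instinct, but be aware that if you simultaneously insist on many leading zeros then $\tau'$ grows, $\|CA^{\tau'}\|\to 0$, and the right-inverse norm blows up---the two mechanisms fight each other. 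Fixing the critical time at $T$ and making $\|x_0\|$ small, as the paper does, is the clean route.
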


\begin{theorem}
\label{prop:NC-2-C1}
Consider a system $P$ as in \eqref{eq:Sofa-LTI},  assume that $\rho(A) \ge 1$, $0 \in \mathcal{U}$, and $0 \notin \mathcal{B}$. Define $\mathcal{V} = \{v \in \mathbb{C}^n \setminus 0 : Av = \lambda v, \ \textrm{for some}\  |\lambda| > 1 \}$. If $\mathcal{V}$ is not in the kernel of $C$, and  $Q^{-1}(Q(0))$ is bounded, then system \eqref{eq:Sofa-LTI} is not finite memory output observable. 
\end{theorem}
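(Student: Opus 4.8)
The plan is to argue by contradiction: suppose $P$ is finite memory output observable, so there is a candidate observer $\hat S$ and a horizon $T \in \mathbb{Z}_+$ with $e_t = 0$ for all $t \ge T$ along every $({\bf u},{\bf y}) \in P$. The idea is to exhibit two initial states $x_0$ and $x_0'$ that (i) are driven by the \emph{same} input sequence (so the observer, which only sees $({\bf u},{\bf y})$, is forced into the same state whenever the outputs have agreed long enough), yet (ii) produce output sequences that agree for a long time but eventually differ at some time $t \ge T$. Since both runs feed $\hat S$ identical data up to that point, $\hat S$ cannot have $e_t = 0$ on both, contradicting finite memory output observability.

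First I would use the hypothesis that $\mathcal{V}$ is not in the kernel of $C$ to pick a (real or real-part of a) eigenvector direction $v$ with $|\lambda| > 1$ and $Cv \ne 0$; taking $x_0 = 0$ and $x_0' = \epsilon\, Re(v)$ for small $\epsilon$, the free responses are $CA^t x_0 = 0$ and $CA^t x_0' \approx \epsilon |\lambda|^t (\cdots)$, which grows without bound in a direction not annihilated by $C$. Next I would take the input to be ${\bf u} \equiv 0$ (legitimate since $0 \in \mathcal{U}$), so $F({\bf u},t) = 0$ for all $t$ and the true outputs are $\tilde y_t = CA^t x_0 = 0$ versus $\tilde y_t' = CA^t x_0'$. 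Since $0 \notin \mathcal{B}$, the quantizer is continuous at $0$, so there is a ball $B_\delta(0)$ on which $Q \equiv Q(0)$; for $\epsilon$ small enough the perturbed trajectory stays inside $B_\delta(0)$ for all $t$ up to some first exit time $T^*(\epsilon)$, and by the unbounded growth of $CA^t x_0'$ this exit time is finite. The key quantitative point is that $T^*(\epsilon) \to \infty$ as $\epsilon \to 0$, so I can choose $\epsilon$ with $T^*(\epsilon) > T$; for $t < T^*(\epsilon)$ both runs have $y_t = y_t' = Q(0)$, while at $t = T^*(\epsilon) \ge T$ we get $\tilde y_{t}' \notin B_\delta(0)$. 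Here the boundedness of $Q^{-1}(Q(0))$ is exactly what guarantees that leaving $B_\delta(0)$ (indeed, eventually leaving the whole bounded cell $Q^{-1}(Q(0))$) forces $y_t' \ne Q(0) = y_t$ at some $t \ge T$; without it the perturbed trajectory could conceivably wander within an unbounded level set of $Q$ forever.

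To finish: along both $({\bf u},{\bf y})$ and $({\bf u},{\bf y}')$ the observer $\hat S$ receives identical inputs ${\bf u} \equiv 0$ and identical outputs $Q(0)$ for all $t = 0, \dots, T^*(\epsilon)-1$, hence (being deterministic, by \eqref{eq:observer}) it is in the same state $q_{T^*(\epsilon)}$ and produces the same $\hat y_{T^*(\epsilon)}$ on both runs; but $y_{T^*(\epsilon)} \ne y_{T^*(\epsilon)}'$, so $e_{T^*(\epsilon)} \ne 0$ on at least one of them, and $T^*(\epsilon) \ge T$, contradicting $e_t = 0$ for all $t \ge T$. I expect the main obstacle to be making the claim ``$T^*(\epsilon) \to \infty$'' precise and honest when $\lambda$ is complex or defective (generalized eigenvector / Jordan block), where $CA^t x_0'$ may oscillate in sign or grow only polynomially-times-exponentially; one must argue that it nonetheless leaves any fixed ball eventually, and that the first such time can be pushed past $T$ by shrinking $\epsilon$. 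Restricting attention to a genuine eigenvector $v$ (as $\mathcal{V}$ is defined via $Av = \lambda v$) and, if complex, working with $Re(v)$ and $Im(v)$ together — or invoking that $\|CA^t x_0'\| \ge c\,|\lambda|^{t}$ along a suitable subsequence — should keep this step manageable, but it is the delicate part of the argument.
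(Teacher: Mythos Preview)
Your high-level strategy matches the paper's: assume a candidate observer with horizon $T$, use zero input, compare the run from $x_0 = 0$ (giving $y_t \equiv Q(0)$) against a small perturbation along an unstable eigendirection, and produce a time $\ge T$ at which the quantized outputs first disagree while all earlier outputs agreed, forcing the observer into the same prediction on both runs.

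There is one slip in your write-up. You define $T^*(\epsilon)$ as the first exit time of $\tilde y'_t$ from $B_\delta(0)$, but in the final paragraph you assert $y_{T^*(\epsilon)} \ne y'_{T^*(\epsilon)}$. Exiting $B_\delta(0)$ does not force $\tilde y'_t$ out of the (possibly much larger) level set $Q^{-1}(Q(0))$, so that inequality is not justified at $T^*(\epsilon)$. The fix is to let $T^*(\epsilon)$ be the first exit time from $Q^{-1}(Q(0))$ itself: then $T^*(\epsilon) < \infty$ by unbounded growth of $CA^t\,Re(v)$ together with boundedness of $Q^{-1}(Q(0))$; $T^*(\epsilon) \to \infty$ as $\epsilon \to 0$ since $B_\delta(0) \subset Q^{-1}(Q(0))$ and the trajectory stays in $B_\delta(0)$ for any prescribed number of steps once $\epsilon$ is small; and $y'_{T^*(\epsilon)} \ne Q(0)$ holds by definition of exit.

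The paper's execution differs in a way that sidesteps the issue you correctly flagged as delicate. Rather than scaling $\epsilon$ and proving that $\|Re(\lambda^t Cv)\|$ is unbounded (which does require an argument when $\lambda$ is non-real), the paper fixes $T$ first and \emph{reverse-engineers} the initial state. It sets $\beta = \sup\{\alpha > 0 : Re(\gamma Cv) \in Q^{-1}(Q(0)) \text{ for all } |\gamma| \le \alpha\}$, shows $0 < \beta < \infty$ (using $0 \notin \mathcal{B}$ for positivity and boundedness of $Q^{-1}(Q(0))$ for finiteness), picks $\kappa \in \mathbb{C}$ with $\beta \le |\kappa| < \beta|\lambda|$ and $Re(\kappa Cv) \notin Q^{-1}(Q(0))$, and takes $x_0' = Re(\kappa \lambda^{-T} v)$. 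Then $CA^t x_0' = Re(\kappa \lambda^{t-T} Cv)$ has $|\kappa \lambda^{t-T}| < \beta$ for $t < T$ (hence lies in $Q^{-1}(Q(0))$) and equals $Re(\kappa Cv) \notin Q^{-1}(Q(0))$ at $t = T$. This pins the first discrepancy exactly at $t = T$ with no asymptotic growth argument needed; your approach reaches the same contradiction but must pay for it with the unboundedness lemma.
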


Intuitively, for Theorem \ref{prop:NC-C1}, if some forced response is exactly at a discontinuous point of the quantizer, then a small perturbation of the initial state can cause a change in the quantized output at  certain time instances, but not at others, and thus the system is not finite memory output observable. Similarly, if the hypotheses in Theorem \ref{prop:NC-2-C1} hold, then under zero input a small perturbation of the initial state around the origin can result in a difference in the  quantized output  at an arbitrarily large time instance, while this perturbation is not reflected in all previous time instances, therefore the system is not finite memory output observable. 

\subsection{Derivation of Main Results}
\label{sec:Deri-C1}

We first derive the sufficient conditions. We begin by proposing a construction for a candidate finite memory observer that will be used in several of the constructive proofs. 

\begin{mydef} \label{def:FIIC}(Finite Input Observer Construction)
Given a system \eqref{eq:Sofa-LTI} and a design parameter $T \in \mathbb{N}$. 
Consider a candidate observer associated with design parameter $T$, $\hat{S}_{T}$, described by
\begin{equation}
\label{eq:ob4LTI}
\begin{aligned}
q_{t+1} &= \phi (q_t, u_t), \\
\hat{y}_t&= \theta (q_t, u_t), \\
q_0 &= q_o,
\end{aligned}
\end{equation}
where  $q_t \in q_o \cup (\bigcup_{i=1}^{T} \mathcal{U}^{i})$ is the state of $\hat{S}$, $u_t \in \mathcal{U}$ is the input of \eqref{eq:Sofa-LTI}.  We enforce that $\hat{S}$ \eqref{eq:ob4LTI} initialize at a \emph{fixed} state: $q_0 = q_o$. Function $\phi: (q_o \cup (\bigcup_{i=1}^{T} \mathcal{U}^{i})) \times \mathcal{U} \to \bigcup_{i=1}^{T} \mathcal{U}^{i}$ is described by: For any $q \in q_o \cup (\bigcup_{i=1}^{T} \mathcal{U}^{i})$, any $u \in \mathcal{U}$,\\
$\rhd$ If $q = q_o$, then $\phi (q,u) = u,$\\
$\rhd$ If $q \in \bigcup_{i=1}^{T-1} \mathcal{U}^{i}$, write $q = (u_1, u_2 \dots u_i)$, then $\phi (q,u) = (u, u_1, u_2 \dots u_i),$\\
$\rhd$ If $q \in \mathcal{U}^{T}$, write $q = (u_1, u_2 \dots u_T)$,  then $\phi (q,u) = (u, u_1, u_2 \dots u_{T-1}).$

The function $\theta: (q_o \cup (\bigcup_{i=1}^{T} \mathcal{U}^{i})) \times \mathcal{U} \to \mathcal{Y}$ is defined as: For any $q \in q_o \cup (\bigcup_{i=1}^{T} \mathcal{U}^{i})$, any $u \in \mathcal{U}$,\\
$\rhd$ If $q \in \mathcal{U}^{T}$, write $q = (\bar{u}_1, \bar{u}_2, \dots, \bar{u}_T)$,  then
\begin{equation}
\label{eq:calalpha}
\theta (q,u) = Q(\sum_{\tau = 1}^{T} CA^{\tau-1}B\bar{u}_\tau + Du), 
\end{equation}
$\rhd$ If $q \notin \mathcal{U}^{T}$, then let $\theta(q,u) = \tilde{y}_\varnothing$ for some $\tilde{y}_\varnothing \in \mathcal{Y}$.
\hfill \qedsymbol
\end{mydef}

We are now ready to prove our first result:

\begin{proof} (Theorem \ref{prop:SC-2-C1})
Given system \eqref{eq:Sofa-LTI}, recall the form of $\tilde{y}_t$ in terms of linear dynamics and note $CA^l = {\bf 0}$, we see that for all $t \ge l$: 
$\tilde{y}_t = \sum_{\tau = t - l}^{t-1} CA^{t-1-\tau}Bu_\tau + Du_t. $
Consider an observer $\hat{S}$ constructed according to Definition \ref{def:FIIC} with parameter $T = l$. Then 
$\hat{y}_t = Q(\sum_{\tau = t - l}^{t-1} CA^{t-1-\tau}Bu_\tau + Du_t) = Q(\tilde{y}_t) = y_t, \ \forall  \ t \ge l.$  
We conclude that system \eqref{eq:Sofa-LTI} is finite memory output observable.
\end{proof}

We next establish several observations that will be instrumental in deriving the remaining results.

\begin{lemma}
\label{lem:ISSB}
Consider system $P$ as in \eqref{eq:Sofa-LTI}, assume that $\rho(A)<1$ and that the initial state $x_0$ is bounded. There exists a $b_0 \in \mathbb{R}_{+}$ such that $\|x_t\| \le b_0$ for all $t \in \mathbb{N}$. 
\end{lemma}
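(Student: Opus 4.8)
\textbf{Proof plan for Lemma \ref{lem:ISSB}.}

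The statement is a standard bounded-input bounded-state fact for Schur-stable linear systems, so the plan is to exploit the explicit solution of the state recursion \eqref{eq:LTI-state} together with the finiteness of the input alphabet $\mathcal{U}$. First I would write the closed-form solution $x_t = A^t x_0 + \sum_{\tau=0}^{t-1} A^{t-1-\tau} B u_\tau$ and split the bound into a homogeneous part and a forced part. For the forced part, since $|\mathcal{U}| < \infty$ there is a uniform bound $\|Bu\| \le b_u := \max_{u \in \mathcal{U}} \|Bu\|$, so $\|\sum_{\tau=0}^{t-1} A^{t-1-\tau} B u_\tau\| \le b_u \sum_{k=0}^{t-1} \|A^k\|$.

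The key technical ingredient is that $\rho(A) < 1$ implies $\sum_{k=0}^{\infty} \|A^k\| < \infty$. I would justify this by recalling Gelfand's formula, or more concretely by picking any $r$ with $\rho(A) < r < 1$ and invoking the standard fact that there exists a constant $M \ge 1$ (depending only on $A$ and $r$) such that $\|A^k\| \le M r^k$ for all $k \in \mathbb{N}$; summing the geometric series gives $\sum_{k=0}^{\infty}\|A^k\| \le M/(1-r) =: \sigma < \infty$. This same estimate also handles the homogeneous term: $\|A^t x_0\| \le M r^t \|x_0\| \le M \|x_0\|$. Since the initial state is assumed bounded, there is $b_x \in \mathbb{R}_{\ge 0}$ with $\|x_0\| \le b_x$, so $\|A^t x_0\| \le M b_x$ for all $t$.

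Combining the two parts by the triangle inequality yields $\|x_t\| \le M b_x + b_u \sigma$ for every $t \in \mathbb{N}$, and taking $b_0 = M b_x + b_u \sigma + 1 \in \mathbb{R}_+$ (adding $1$ to ensure strict positivity even in degenerate cases) completes the proof. I do not anticipate a genuine obstacle here; the only point requiring a little care is citing or proving the geometric decay bound $\|A^k\| \le M r^k$ for a non-diagonalizable $A$ — the cleanest route is to cite it as a standard consequence of $\rho(A) < 1$ (e.g., via the Jordan form, where polynomial-in-$k$ factors are absorbed by replacing $\rho(A)$ with any larger $r<1$), rather than reproving it.
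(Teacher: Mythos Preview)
Your proposal is correct and follows essentially the same approach as the paper: write the variation-of-constants formula, use finiteness of $\mathcal{U}$ to bound $\|Bu_\tau\|$, and invoke Schur stability to bound $\sum_k \|A^k\|$. The only cosmetic difference is that the paper cites the convergence of $\sum_{\tau=0}^{\infty}\|A^\tau\|$ directly (Horn) and packages the bound as $b_0 = b_1 b_2$ with $b_2 = \max\{\|x_0\|, \max_{u\in\mathcal{U}}\|Bu\|\}$, whereas you split the homogeneous and forced parts and use an explicit $\|A^k\|\le M r^k$ estimate.
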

\begin{proof}
The solution associated with initial condition $x_0$ is given by 
$x_t = A^t x_0 + \sum_{\tau = 0}^{t-1} A^{t-1-\tau} B u_\tau.$ 
For any $t \in \mathbb{N}$, we have
$\|x_t\| = \|A^t x_0 + \sum_{\tau = 0}^{t-1} A^{t-1-\tau} B u_\tau\|
 \le \max \{\|x_0\|, \|B u_t\|\} \sum_{\tau = 0}^{t} \|A^{\tau}\|.$ 

Since $\sum_{\tau = 0}^{\infty} \|A^{\tau}\|$ converges (pp. 299, \cite{Horn}), we can find an upper bound $b_1 \in \mathbb{R}_+$ such that $\sum_{\tau = 0}^{\infty} \|A^{\tau}\| \le b_1$. Since $u_t \in \mathcal{U}$ and $\mathcal{U}$ is finite, $\|B u_t\|$ is also bounded. Let $b_2 = \max\{\|x_0\|, \max\{\|B u\|: u \in \mathcal{U}\}\}$, and then let $b_0 = b_1b_2$, we have $\|x_t\| \le b_0$ for all $t \in \mathbb{N}$. 
\end{proof}

\begin{lemma}
\label{lem:tytindab2b}
Consider system $P$  as in \eqref{eq:Sofa-LTI}, assume that $\rho(A)<1$, $C \neq {\bf 0}$ and that the initial state $x_0$ is bounded.  If $d(\mathcal{A}, \mathcal{B}) \neq 0$, then there exists a $T \in \mathbb{Z}_+$ such that for all $t \ge T$ :
\begin{equation}
\label{eq:ytinball}
\tilde{y}_t \in B_{\frac{d(\mathcal{A}, \mathcal{B})}{2}}(\alpha),
\end{equation} 
where $\alpha = \sum_{\tau = t - T}^{t-1} CA^{t-1-\tau}Bu_\tau + Du_t$ and $\alpha \in \mathcal{A}$. 
\end{lemma}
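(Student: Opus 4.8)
The plan is to split $\tilde{y}_t$ into the forced response from the recent $T$ inputs (which lies in $\mathcal{A}$ by definition) and a ``tail'' contribution coming from the initial state and the inputs older than $T$ steps, and then to show that this tail can be made uniformly smaller than $d(\mathcal{A},\mathcal{B})/2$ by choosing $T$ large. Write, for $t \ge T$,
\begin{equation}
\tilde{y}_t = C A^t x_0 + \sum_{\tau=0}^{t-1} C A^{t-1-\tau} B u_\tau + D u_t
= \underbrace{\sum_{\tau = t-T}^{t-1} C A^{t-1-\tau} B u_\tau + D u_t}_{=\,\alpha \in \mathcal{A}} \;+\; \underbrace{C A^t x_0 + \sum_{\tau=0}^{t-T-1} C A^{t-1-\tau} B u_\tau}_{=:\,r_t}.
\end{equation}
The reindexed sum defining $\alpha$ is exactly $F(\tilde{\bf u},T)$ for the shifted input sequence $\tilde u_j = u_{t-T+j}$, so $\alpha \in \mathcal{A}$; hence establishing $\tilde y_t \in B_{d(\mathcal A,\mathcal B)/2}(\alpha)$ reduces to showing $\|r_t\| < d(\mathcal A,\mathcal B)/2$ for all $t$ large enough.

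Next I would bound $\|r_t\|$. For the initial-state term, $\|CA^t x_0\| \le \|C\|\,\|A^t\|\,b$ where $b$ bounds $\|x_0\|$, and $\|A^t\| \to 0$ since $\rho(A) < 1$. For the old-input term, substitute $s = t-1-\tau$, so it becomes $\sum_{s=T}^{t-1} C A^{s} B u_{t-1-s}$, whose norm is at most $\bigl(\max_{u\in\mathcal U}\|Bu\|\bigr)\|C\|\sum_{s=T}^{\infty}\|A^s\|$. By convergence of $\sum_{s=0}^\infty \|A^s\|$ (pp.\ 299, \cite{Horn}, as already used in Lemma \ref{lem:ISSB}), the tail $\sum_{s=T}^\infty \|A^s\|$ tends to $0$ as $T\to\infty$. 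Combining, $\|r_t\| \le \|C\|\,b\,\|A^t\| + \bigl(\max_{u\in\mathcal U}\|Bu\|\bigr)\|C\|\sum_{s=T}^\infty\|A^s\|$, and both terms can be made $< d(\mathcal A,\mathcal B)/4$ by taking $T$ (hence $t\ge T$) large, using that $d(\mathcal A,\mathcal B) > 0$ by hypothesis.

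The only mild subtlety — not really an obstacle — is bookkeeping: the same $T$ must serve simultaneously as the truncation depth of the sum and as the threshold beyond which $\|A^t\|$ is small, and one must be careful that the ``recent'' block has exactly $T$ terms for every $t \ge T$ (which it does, since $\tau$ ranges over $t-T,\dots,t-1$). One also needs $C \neq {\bf 0}$ merely so that $d(\mathcal A,\mathcal B)$ is a meaningful positive quantity and the $\|C\|$ factors behave; the argument is otherwise uniform in $t$. So the proof is essentially: decompose, recognize the main block as an element of $\mathcal A$, and dominate the remainder by a convergent geometric-type tail plus a vanishing $\|A^t\|$ term.
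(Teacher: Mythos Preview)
Your proof is correct, but the paper takes a slightly slicker route that avoids the two-term tail estimate. The key observation you did not exploit is that the remainder $r_t = CA^t x_0 + \sum_{\tau=0}^{t-T-1} CA^{t-1-\tau}Bu_\tau$ is \emph{exactly} $CA^T x_{t-T}$, by the state-transition formula. The paper therefore invokes Lemma~\ref{lem:ISSB} once to get a uniform bound $\|x_{t-T}\|\le b_0$, then simply chooses $T$ so that $\|A^T\| < \dfrac{d(\mathcal A,\mathcal B)}{2\,b_0\,\|C\|}$ (this is where $C\neq{\bf 0}$ is actually used). That yields $\|r_t\| = \|CA^T x_{t-T}\| < d(\mathcal A,\mathcal B)/2$ in one line, with no tail sum $\sum_{s\ge T}\|A^s\|$ needed. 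Your decomposition into initial-state and old-input contributions works too and is perhaps more transparent, but it effectively re-derives the content of Lemma~\ref{lem:ISSB} inside the bound; recognizing $r_t = CA^T x_{t-T}$ packages both pieces at once and reduces the choice of $T$ to controlling a single power $\|A^T\|$ rather than a series tail.
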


\begin{proof}
By Lemma \ref{lem:ISSB}, $\|x_t\| \le b_0$ for all $t \in \mathbb{N}$ for some $b_0 \in \mathbb{R}_{+}$. Since $\lim_{\tau \to \infty} A^{\tau} = 0$ (pp. 298, \cite{Horn}), recall the assumption that $C \neq {\bf 0}$, we can choose $T \in \mathbb{Z}_+$ such that 
\begin{equation}
\label{eq:chooseT-C1}
\|A^T\| < \frac{d(\mathcal{A}, \mathcal{B})} {2  b_0 \|C\|}.
\end{equation}
Then $\|CA^T x_t\|  \le \|C\| \|A^T\| \|x_t\| < d(\mathcal{A}, \mathcal{B})/2$ for all $t \in \mathbb{N}$. Recall that $\tilde{y}_t = CA^T x_{t-T} + \sum_{\tau = t - T}^{t-1} CA^{t-1-\tau}Bu_\tau + Du_t$, we see \eqref{eq:ytinball} holds.
 \end{proof}

Next, we observe that the quantized output $y_t$ can be determined by the knowledge of the forced response.
\begin{lemma}
\label{lem:prop-of-Q}
Consider system $P$  as in \eqref{eq:Sofa-LTI}, and sets $\mathcal{A}$ and $\mathcal{B}$ as in \eqref{eq:set-A} and \eqref{eq:set-B} respectively. If $d(\mathcal{A}, \mathcal{B}) \neq 0$, then  for any $\alpha \in \mathcal{A}$, 
\begin{equation}
\label{eq:prop-of-Q}
y \in cl(B_{\frac{d(\mathcal{A}, \mathcal{B})}{2}}(\alpha)) \ \Rightarrow \ Q(y) = Q(\alpha). 
\end{equation}
\end{lemma}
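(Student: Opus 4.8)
The plan is to show that the ball $\mathrm{cl}(B_{d(\mathcal{A},\mathcal{B})/2}(\alpha))$ contains no discontinuity point of $Q$, and then argue that $Q$ is constant on any connected region free of discontinuities, so in particular $Q(y)=Q(\alpha)$ for every $y$ in that ball.

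First I would verify the ball avoids $\mathcal{B}$. Suppose for contradiction that some $\beta \in \mathcal{B}$ lies in $\mathrm{cl}(B_{d(\mathcal{A},\mathcal{B})/2}(\alpha))$, i.e. $\|\beta - \alpha\| \le d(\mathcal{A},\mathcal{B})/2$. Since $\alpha \in \mathcal{A}$ and $\beta \in \mathcal{B}$, the definition of $d(\mathcal{A},\mathcal{B})$ as an infimum gives $d(\mathcal{A},\mathcal{B}) \le \|\alpha - \beta\| \le d(\mathcal{A},\mathcal{B})/2$, forcing $d(\mathcal{A},\mathcal{B}) \le 0$, which contradicts $d(\mathcal{A},\mathcal{B}) \neq 0$ (distances are non-negative). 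Hence the open ball $B_{d(\mathcal{A},\mathcal{B})/2}(\alpha)$, and even its closure, contains no point of $\mathcal{B}$; equivalently $Q$ is continuous at every point of that closed ball.

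Next I would use the piecewise-constant structure of $Q$ recorded in Section \ref{Sec:Systems}: at every point $\tilde{y}$ where $Q$ is continuous there is $\delta>0$ with $Q(z)=Q(\tilde{y})$ for all $z$ with $\|z-\tilde{y}\|<\delta$. This says the level sets $Q^{-1}(y_i)$ partition the continuity set of $Q$ into relatively open pieces. Restricting to the convex (hence connected) set $\mathrm{cl}(B_{d(\mathcal{A},\mathcal{B})/2}(\alpha))$, on which $Q$ is everywhere locally constant, a standard connectedness argument shows $Q$ is globally constant there: fix $y$ in the ball, and the set $\{z \in \mathrm{cl}(B_{d(\mathcal{A},\mathcal{B})/2}(\alpha)) : Q(z) = Q(y)\}$ is both open and closed in the ball (open by local constancy, closed because its complement is a finite union of the other level sets, each open by the same local-constancy property), and nonempty, so it is the whole ball. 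Since $\alpha$ itself lies in the ball, $Q(y)=Q(\alpha)$, which is \eqref{eq:prop-of-Q}.

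The only subtle point — the ``main obstacle'' — is the connectedness/local-constancy step: one must be careful that $Q$ being locally constant on a connected set really does force it to be globally constant, and that the relevant set is connected. Convexity of a closed ball settles connectedness immediately, and since $\mathcal{Y}$ is finite the level sets are finite in number so the clopen argument goes through cleanly. If one prefers to avoid topology, an equivalent route is: for any $y$ in the ball, the line segment from $\alpha$ to $y$ is compact, contained in the ball, and along it $Q$ is locally constant, so a covering/chaining argument along the segment gives $Q(y)=Q(\alpha)$ directly. Either way the computation is routine once the ball is shown to miss $\mathcal{B}$.
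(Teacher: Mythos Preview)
Your proof is correct, but takes a different route from the paper. The paper also begins by showing $Q$ is continuous everywhere on the closed ball (same argument as yours), but then proceeds by an explicit bisection construction: assuming some $y$ in the ball has $Q(y)\neq Q(\alpha)$, it builds sequences $w_1=\alpha$, $v_1=y$, repeatedly halving the segment and retaining the endpoint pair on which $Q$ disagrees, so that $Q(w_n)\neq Q(v_n)$ while $\|w_n-v_n\|\to 0$; compactness yields a common limit at which $Q$ is continuous yet takes two values nearby, a contradiction.

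Your clopen/connectedness argument is the cleaner and more conceptual version of the same idea: it identifies directly that a locally constant map on a connected set is constant, and the finiteness of $\mathcal{Y}$ makes the ``complement is open'' step immediate. The paper's bisection is essentially your alternative line-segment route carried out by hand. Your approach generalizes at once to any connected subset disjoint from $\mathcal{B}$, whereas the paper's argument implicitly relies on convexity (midpoints stay in the ball) without naming it; on the other hand, the paper's version avoids invoking topological language and may read as more self-contained for an audience less comfortable with clopen arguments.
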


\begin{proof}
First observe that $Q$ is continuous at any point $y \in cl(B_{\frac{d(\mathcal{A}, \mathcal{B})}{2}}(\alpha))$, otherwise $d(\mathcal{A}, \mathcal{B}) \le d(\mathcal{A}, \mathcal{B})/2$, which contradicts with $d(\mathcal{A}, \mathcal{B})>0$. Next, assume there is a $y \in cl(B_{\frac{d(\mathcal{A}, \mathcal{B})}{2}}(\alpha))$ such that $Q(y) \neq Q(\alpha)$. Define two sequences $\{w_n\}_{n=1}^\infty$, $\{v_n\}_{n=1}^\infty$ as follows: Let $w_1 = \alpha$, $v_1 = y$. For any $n \ge 2$, let $z = (w_{n-1} + v_{n-1})/2$, if $Q(z) \neq Q(w_{n-1})$,  let $w_{n} = w_{n-1}, v_{n} = z$; otherwise, let $w_{n} = z, v_{n} = v_{n-1}$. By this definition, we see that $Q(w_n) \neq Q(v_n)$ implies $Q(w_{n+1}) \neq Q(v_{n+1})$. Since $Q(w_1) \neq Q(v_1)$, by induction, we have: $Q(w_n) \neq Q(v_n)$ for all $n \in \mathbb{Z}_+$. At the same time, it is clear that $\|w_n - v_n\| = (1/2)^n\|w_1-v_1\|$. Note that $\{w_n\}_{n=1}^\infty, \{v_n\}_{n=1}^\infty \subset cl(B_{\frac{d(\mathcal{A}, \mathcal{B})}{2}}(\alpha))$ and $cl(B_{\frac{d(\mathcal{A}, \mathcal{B})}{2}}(\alpha))$ is a compact set in $\mathbb{R}^p$, by renaming, there are subsequences $\{w'_n\} \subset \{w_n\}$ and $\{v'_n\} \subset \{v_n\}$  such that  $Q(w'_n) \neq Q(v'_n)$, $\|w'_n - v'_n\| \le (1/2)^n\|w_1-v_1\|$, for all $n \in \mathbb{Z}_+$, and $\lim_{n \to \infty} w'_n = w$, $\lim_{n \to \infty} v'_n = v$ for some $w, v$ in $cl(B_{\frac{d(\mathcal{A}, \mathcal{B})}{2}}(\alpha))$.
Since $\|w-v\| \le \|w-w'_n\| + \|w'_n-v'_n\| + \|v'_n-v\|$, we see that for any $\epsilon >0$, $\|w-v\|<\epsilon$, and consequently $w = v$.
Since $w \in cl(B_{\frac{d(\mathcal{A}, \mathcal{B})}{2}}(\alpha))$, $Q$ is continuous at $w$. Recall $Q$ is piecewise-constant, there is $\delta >0$ such that $Q(z) = Q(w)$ for all $\|z-w\| < \delta$, $z \in \mathbb{R}^p$. Since $\lim_{n \to \infty} w'_n = w$, there is $N_1$ such that $Q(w'_n) = Q(w)$, for all $n \ge N_1$. Similarly, there is $N_2$ such that $Q(v'_n) = Q(v) = Q(w)$ for all $n \ge N_2$. Let $n = \max\{N_1, N_2\}$, then $Q(w'_n) = Q(w) = Q(v'_n)$, which contradicts with $Q(w'_n) \neq Q(v'_n)$ for all $n \in \mathbb{Z}_+$. Therefore, assumption is false, and we conclude that for any $\alpha \in \mathcal{A}$ and any $y \in cl(B_{\frac{d(\mathcal{A}, \mathcal{B})}{2}}(\alpha))$, $Q(y) = Q(\alpha)$.
\end{proof}

Given a system \eqref{eq:Sofa-LTI}, we first decompose the state $x_t$ into stable modes and unstable modes, and we make an observation on the stable modes. In particular, consider the Jordan canonical form of the matrix $A$,
\begin{equation}
\label{eq:JordecompA}
A = M J M^{-1},
\end{equation}
where matrix $J$ is in partitioned diagonal form, and matrix $M$ is a generalized modal matrix for $A$ (pp. 205, \cite{Bronson}).
Write $M = [v_1 \ v_2 \ \cdots \ v_n]$, where $v_i \in \mathbb{C}^n$ for $1 \le i \le n$, then each $v_i$ is a generalized eigenvector of $A$, and $\{v_i\}_{i=1}^n$ form a basis of $\mathbb{R}^n$. For each $v_i$, use $\lambda_i$ to denote the eigenvalue of $A$ corresponding to $v_i$. 
Next,  we decompose the state vector $x_t$  using $\{v_i\}_{i=1}^n$.  For all $t \in \mathbb{N}$, write $x_t$
 as a linear combination of $\{v_i\}_{i=1}^n$,
 \begin{equation}
 \label{eq:xttoliges}
 x_t = \sum_{i=1}^n [\alpha_t]_i v_i,
 \end{equation}
 where $\alpha_t \in \mathbb{C}^n$ is the coordinates of $x_t$ corresponding to the basis $\{v_i\}_{i=1}^n$. Here $[\alpha_t]_i, 1 \le i \le n$ are the coordinates of $x_t$ with respect to the basis $\{v_i\}_{i=1}^n$, and $[\alpha_t]_i$ with $v_i \notin \mathcal{E}$ are the coordinates corresponding with the stable generalized eigenvectors. 
We make an observation on these stable modes in the following.
 
\begin{lemma} 
\label{lem:Jordisss}
Consider system \eqref{eq:Sofa-LTI}, write $x_t$ as in \eqref{eq:xttoliges}, then $\sum_{i:v_i \notin \mathcal{E}}   |[\alpha_t]_i| < b_1, \forall \ t \in \mathbb{N}$ for some $b_1 \in \mathbb{R}_+$. 
\end{lemma}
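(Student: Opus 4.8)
The plan is to exploit the block-diagonal (Jordan) structure of $A$ to write the evolution of the coordinates $\alpha_t$ explicitly, and then show that the coordinates associated with eigenvalues of magnitude strictly less than $1$ are driven by a Schur-stable subsystem with bounded (finite-alphabet) forcing, hence remain bounded uniformly in $t$. First I would pass to coordinates via $M^{-1}$: writing $\alpha_t = M^{-1} x_t \in \mathbb{C}^n$, the recursion \eqref{eq:LTI-state} becomes $\alpha_{t+1} = J \alpha_t + M^{-1} B u_t$. Since $J$ is block diagonal with one block per Jordan chain, and the generalized eigenvectors $v_i \notin \mathcal{E}$ are precisely those belonging to Jordan blocks with $|\lambda_i| < 1$, the subvector $\alpha_t^{s} := \{[\alpha_t]_i : v_i \notin \mathcal{E}\}$ satisfies its own decoupled recursion $\alpha_{t+1}^{s} = J_s \alpha_t^{s} + (M^{-1}B u_t)^{s}$, where $J_s$ collects exactly the Jordan blocks with spectral radius $\rho(J_s) < 1$.

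Next I would solve this recursion: $\alpha_t^{s} = J_s^{t}\alpha_0^{s} + \sum_{\tau=0}^{t-1} J_s^{t-1-\tau}(M^{-1}Bu_\tau)^{s}$. Then I bound in, say, the $1$-norm: $\|\alpha_t^{s}\|_1 \le \|J_s^{t}\|_1\|\alpha_0^{s}\|_1 + \big(\sum_{\tau=0}^{\infty}\|J_s^{\tau}\|_1\big)\cdot \max_{u \in \mathcal{U}}\|(M^{-1}Bu)^{s}\|_1$. Because $\rho(J_s) < 1$, the series $\sum_{\tau=0}^{\infty}\|J_s^{\tau}\|_1$ converges (pp.~299, \cite{Horn}), so call its sum $c_1$; also $\|J_s^{t}\|_1 \le c_1$ for every $t$. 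Since $\mathcal{U}$ is finite, $c_2 := \max_{u \in \mathcal{U}}\|(M^{-1}Bu)^{s}\|_1 < \infty$, and $\alpha_0^{s}$ is a fixed vector (the excerpt does not even need $x_0$ bounded here, but if one wants uniformity over all admissible initial states one invokes the boundedness hypothesis used elsewhere, or simply notes $x_0$ is a given fixed vector for a given trajectory). Setting $b_1 := c_1\|\alpha_0^{s}\|_1 + c_1 c_2 + 1$ gives $\sum_{i:v_i\notin\mathcal{E}} |[\alpha_t]_i| = \|\alpha_t^{s}\|_1 < b_1$ for all $t \in \mathbb{N}$, which is the claim (the $+1$ makes the inequality strict). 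This essentially mirrors the argument of Lemma \ref{lem:ISSB}, restricted to the stable subspace.

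The main obstacle, and the only genuinely delicate point, is the bookkeeping that the stable and unstable coordinates genuinely decouple: one must be careful that $M^{-1}$ is chosen compatibly with the ordering of columns of $M$ so that $J = M^{-1}AM$ is block diagonal with the "stable" blocks grouped together, and that "$v_i \notin \mathcal{E}$" really does correspond to $|\lambda_i| < 1$ — i.e.\ that $\mathcal{E}$, the set of generalized eigenvectors with $|\lambda| \ge 1$, together with the $|\lambda| < 1$ ones forms the full basis $\{v_i\}_{i=1}^n$ with no overlap. Once that structural claim is in place, the norm estimate is routine and follows the template already established in Lemma \ref{lem:ISSB}. A secondary subtlety is that $\alpha_t$ is complex; since the $J_s$ blocks with complex eigenvalues come in conjugate pairs (or one works over $\mathbb{C}$ throughout), the bound on $\|\cdot\|_1$ over $\mathbb{C}^{n}$ is unaffected, so no real-part extraction is needed for this particular estimate.
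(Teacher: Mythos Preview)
Your proposal is correct and follows essentially the same approach as the paper: pass to Jordan coordinates $\alpha_t = M^{-1}x_t$, observe that the stable coordinates evolve under a decoupled Schur-stable recursion $\alpha_{t+1}^S = J^S \alpha_t^S + \beta_t^S$, and then invoke the argument of Lemma~\ref{lem:ISSB} (bounded initial data, finite $\mathcal{U}$, convergent $\sum\|(J^S)^\tau\|$) to get a uniform $\|\cdot\|_1$ bound. The paper spells out the decoupling step you flag as the ``main obstacle'' by explicitly checking that $[J\alpha_t^U]_j = 0$ for stable indices $j$ (and conversely), rather than reordering the blocks, but this is only a bookkeeping variant of what you sketch.
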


\begin{proof}
Let $M$ be given as in \eqref{eq:JordecompA}, and note that $M = [v_1 \ v_2 \ \cdots \ v_n]$, recall \eqref{eq:xttoliges}, we have $M^{-1} x_t = \alpha_t$.  Define $\alpha_t^U \in \mathbb{C}^n$ as: If $v_i \in \mathcal{E}$, then $ [\alpha_t^U]_i = [\alpha_t]_i$; otherwise  $[\alpha_t^U]_i = 0$.  
 Similarly, define $\alpha_t^S \in \mathbb{C}^n$ as: If $v_i \notin \mathcal{E}$, then $[\alpha_t^S]_i = [\alpha_t]_i$; otherwise  $[\alpha_t^S]_i = 0$. 
We see that $\alpha_t = \alpha_t^U + \alpha_t^S$. Essentially, $\alpha_t^U$ ($\alpha_t^S$) are the coordinates corresponding with the unstable (stable) generalized eigenvectors.  

Similarly, we decompose the $Bu_t$ term in system \eqref{eq:Sofa-LTI}. For all $t \in \mathbb{N}$, write $Bu_t$
 as a linear combination of $\{v_i\}_{i=1}^n$,
$ Bu_t = \sum_{i=1}^n [\beta_t]_i v_i,$
where $\beta_t \in \mathbb{C}^n$ is the coordinates of $Bu_t$ corresponding to the basis $\{v_i\}_{i=1}^n$. Then $M^{-1}Bu_t = \beta_t$. 
Define $\beta_t^U \in \mathbb{C}^n$ as: If $v_i \in \mathcal{E}$, then $ [\beta_t^U]_i = [\beta_t]_i$; otherwise  $[\beta_t^U]_i = 0$,  
and define $\beta_t^S \in \mathbb{C}^n$ as: If $v_i \notin \mathcal{E}$, then $[\beta_t^S]_i = [\beta_t]_i$; otherwise  $[\beta_t^S]_i = 0$.  We also have $\beta_t = \beta_t^U + \beta_t^S$.
 
 Recall \eqref{eq:Sofa-LTI} and $A = MJM^{-1}$, we have
 $
 M M^{-1} x_{t+1} = MJM^{-1} x_t + M M^{-1} Bu_t.
 $
 Recall $M^{-1} x_t = \alpha_t$, and $M^{-1}Bu_t = \beta_t$, we have
$
   \alpha_{t+1} = J \alpha_t +  \beta_t.
 $
 Consequently 
 \begin{equation}
 \label{eq:jordecomus}
 \begin{aligned}
   \alpha_{t+1}^U + \alpha_{t+1}^S 
   & =  (J \alpha_t^U + \beta_t^U) +  (J \alpha_t^S + \beta_t^S). 
   \end{aligned}
 \end{equation} 
 Consider the term $J \alpha_t^U$, and write $J = [w_1 \cdots w_n]$ where $w_1, \dots, w_n \in \mathbb{C}^n$, then
 $
 J \alpha_t^U = \sum_{i=1}^n [\alpha_t^U]_i w_i.
 $
 Since $[\alpha_t^U]_i = 0$ for all $i$ such that $v_i \notin \mathcal{E}$, we have
 \begin{equation}
 J \alpha_t^U = \sum_{i : v_i \in \mathcal{E}} [\alpha_t^U]_i w_i.
 \end{equation}
 
 Recall the definition of $\lambda_i$ for $i = 1,\dots,n$, and the form of $J$, we see that if $\lambda_j \neq \lambda_i$, then $[w_i]_j = 0$, for all $1\le i,j \le n$. For any $i $ such that $v_i \in \mathcal{E}$, and any $j$ such that $v_j \notin \mathcal{E}$, we have $|\lambda_j| <1$ and $|\lambda_i| \ge 1$, therefore $\lambda_j \neq \lambda_i$, and consequently $[w_i]_j = 0$. We see that for all $j$ such that $v_j \notin \mathcal{E}$,
$
 [J \alpha_t^U]_j = 0.
$
 Similarly, for all $j$ such that $v_j \in \mathcal{E}$,
$
 [J \alpha_t^S]_j = 0.
$
 Recall \eqref{eq:jordecomus}, we see that 
$ \alpha_{t+1}^S  =   (J \alpha_t^S + \beta_t^S). 
$ 
Consider the term $J \alpha_t^S$, we have
$ J \alpha_t^S = \sum_{i : v_i \notin \mathcal{E}} [\alpha_t^S]_i w_i.
$
 Define a square matrix $J^S$ to be $J^S = [w_1^S \ w_2^S \cdots \ w_n^S]$, where
 \begin{equation*}
  w_i^S = \left\{ \begin{array}{ll}
 w_i, & \textrm{if} \ v_i \notin \mathcal{E}, \\
 0, & \textrm{otherwise.}
 \end{array}
 \right.
 \end{equation*}
 Note that $J^S$ is Schur-stable.
 Then we have 
$ J \alpha_t^S = \sum_{i : v_i \notin \mathcal{E}} [\alpha_t^S]_i w_i =  \sum_{i =1}^n [\alpha_t^S]_i w_i^S = J^S \alpha_t^S.
$ And consequently, we have
 \begin{equation}
 \label{eq:stablemodes}
 \alpha_{t+1}^S  =   J^S \alpha_t^S + \beta_t^S,
 \end{equation}
 where $J^S$ is Schur-stable. 
 Since $\|\alpha_0^S\|_1  \le \|M^{-1}\|_1 \| x_0\|_1$, and $x_0$ is bounded, we see that $\|\alpha_0^S\|_1$ is bounded. Similarly, note that  $\mathcal{U}$ is a finite set in $\mathbb{R}^m$, we see that $\|\beta_t^S\|_1$ is uniformly bounded. Given system  \eqref{eq:stablemodes}, since $J^S$ is Schur-stable, $\|\alpha_0^S\|_1$ is bounded, and $\|\beta_t^S\|_1$ is uniformly bounded, by the derivation of Lemma \ref{lem:ISSB}, $\|\alpha_t^S\|_1 < b_1$ for some $b_1 \in \mathbb{R}_+$ for all $t \in \mathbb{N}$.  Note that $\sum_{i:v_i \notin \mathcal{E}}    |[\alpha_t]_i| = \|\alpha_t^S\|_1$, we have
 $\sum_{i:v_i \notin \mathcal{E}}   |[\alpha_t]_i|  < b_1,  \ b_1 \in \mathbb{R}_+, $ for all $t \in \mathbb{N}$. 
\end{proof}

Next, we make an observation about the forced response of the underlying linear dynamics.

\begin{lemma}
\label{lem:tytindab2bunst}
Consider a system $P$  as in \eqref{eq:Sofa-LTI}, assume the hypotheses in Theorem \ref{prop:SC-C1-G} are satisfied.  Then there exists a $T \in \mathbb{Z}_+$ such that for all $t \ge T$ :
\begin{equation}
\label{eq:ytinballunst}
\tilde{y}_t \in B_{\frac{d(\mathcal{A}, \mathcal{B})}{2}}(\alpha),
\end{equation} 
where $\alpha = \sum_{\tau = t - T}^{t-1} CA^{t-1-\tau}Bu_\tau + Du_t$ and $\alpha \in \mathcal{A}$. 
\end{lemma}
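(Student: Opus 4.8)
The plan is to follow the template of the proof of Lemma~\ref{lem:tytindab2b}, replacing the contraction estimate $\|CA^Tx_t\|\le\|C\|\,\|A^T\|\,\|x_t\|$ (which fails here since $\|x_t\|$ need not be bounded) by one that acts only on the stable modes, using the hypothesis that $\mathcal{E}$ is in the kernel of $C$. As in Lemma~\ref{lem:tytindab2b}, for any $T\in\mathbb{Z}_+$ and any $t\ge T$ one has $x_t=A^Tx_{t-T}+\sum_{\tau=t-T}^{t-1}A^{t-1-\tau}Bu_\tau$, hence $\tilde{y}_t=CA^Tx_{t-T}+\alpha$ with $\alpha=\sum_{\tau=t-T}^{t-1}CA^{t-1-\tau}Bu_\tau+Du_t$, and the same shifting argument used there (restricting $\bf u$ to the window $\{t-T,\dots,t\}$) shows $\alpha\in\mathcal{A}$. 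So it suffices to exhibit a single $T\in\mathbb{Z}_+$ such that $\|CA^Tx_{s}\|<d(\mathcal{A},\mathcal{B})/2$ for every $s\in\mathbb{N}$.

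To get that bound I would decompose $x_s=x_s^U+x_s^S$ along the generalized eigenbasis, with $x_s^U=\sum_{i:v_i\in\mathcal{E}}[\alpha_s]_iv_i$ and $x_s^S=\sum_{i:v_i\notin\mathcal{E}}[\alpha_s]_iv_i$, as in the setup preceding Lemma~\ref{lem:Jordisss}. For the unstable part, $\mathrm{span}\,\mathcal{E}$ is $A$-invariant, so $A^Tx_s^U$ is again a linear combination of vectors in $\mathcal{E}$; since $\mathcal{E}\subset\ker C$ by hypothesis, $CA^Tx_s^U={\bf 0}$ for every $T$. For the stable part, I would write $x_s^S=M\alpha_s^S$ and $A^Tx_s^S=MJ^T\alpha_s^S=M(J^S)^T\alpha_s^S$, where $J^S$ is the Schur-stable matrix constructed in the proof of Lemma~\ref{lem:Jordisss}; the second equality uses the block-diagonal structure of $J$ (no entries linking stable and unstable Jordan blocks), so that $J$ and $J^S$ agree on vectors supported on the stable coordinates and that support is preserved under iteration. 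Then $\|CA^Tx_s^S\|\le\|CM\|\,\|(J^S)^T\|_1\,\|\alpha_s^S\|_1\le\|CM\|\,\|(J^S)^T\|_1\,b_1$ for all $s$, using Lemma~\ref{lem:Jordisss}. Since $J^S$ is Schur-stable, $\|(J^S)^T\|_1\to0$, so I can pick $T\in\mathbb{Z}_+$ with $\|(J^S)^T\|_1<\frac{d(\mathcal{A},\mathcal{B})}{2(\|CM\|\,b_1+1)}$ (the $+1$ handles the degenerate case $C={\bf 0}$). Combining the two parts gives $\|CA^Tx_s\|<d(\mathcal{A},\mathcal{B})/2$ for all $s$, hence $\tilde{y}_t\in B_{\frac{d(\mathcal{A},\mathcal{B})}{2}}(\alpha)$ for all $t\ge T$, which is \eqref{eq:ytinballunst}.

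I expect the main obstacle to be the bookkeeping in the middle step: making precise that $A^T$ maps $\mathrm{span}\,\mathcal{E}$ into $\ker C$ and acts on the complementary stable coordinates through the Schur-stable matrix $(J^S)^T$. The ingredients are already present in the proof of Lemma~\ref{lem:Jordisss} (the $A$-invariance of the generalized eigenspaces and the absence of cross-block entries in $J$), so the difficulty is purely in phrasing these facts so that iterating $A$ provably stays inside the intended subspaces; the forced-response identity, the membership $\alpha\in\mathcal{A}$, and the final choice of $T$ are routine and mirror Lemma~\ref{lem:tytindab2b}.
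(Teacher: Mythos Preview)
Your proposal is correct and follows the same overall strategy as the paper: split $x_s$ into stable and unstable generalized-eigenspace components, kill the unstable part via $\mathcal{E}\subset\ker C$ and $A$-invariance, and bound the stable part uniformly using Lemma~\ref{lem:Jordisss}. The only difference is in how the decay of the stable part is quantified: the paper computes $A^Tv_i$ explicitly via Jordan-block entries to get a bound of the form $b_1\kappa n\eta\cdot T^n\rho^T$ with $\rho=\max\{|\lambda_i|:v_i\notin\mathcal{E}\}<1$, whereas you use the cleaner matrix identity $A^Tx_s^S=M(J^S)^T\alpha_s^S$ and invoke $\|(J^S)^T\|_1\to0$ directly; both yield the same conclusion, and your route avoids the entrywise bookkeeping.
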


\begin{proof}
Use $\{e_i\}_{i = 1}^n$ to denote the standard basis of $\mathbb{R}^n$, and recall the computation of powers of a Jordan block (pp. 57, \cite{Hespanha}), then for all $t\ge n$,
$
A^t v_i 
= M J^t e_i 
= \sum_{j: \lambda_j = \lambda_i} \lambda_i^t \ p_{(j, i)}(t) \ v_j,
$
where $p_{(j, i)}(t)$ is some polynomial in $t$ that depends on the pair $(j,i)$. Recall the particular form of $J_i^t$, the upper triangular elements of $J_i^t$ has the form $\frac{t!}{k! (t-k)!} \cdot \lambda_i^{t-k}$, where $0 \le k \le n_i - 1$, and $n_i$ corresponds to the size of $J_i$. Note that $\frac{t!}{k! (t-k)!} \le t^n$, and that $|\lambda_i|^{t-k} \le \kappa |\lambda_i|^t$ for some $\kappa \in \mathbb{R}_+$: If $|\lambda_i| \ge 1$ or $|\lambda_i| = 0$, let $\kappa_1 = 1$; if $1 > |\lambda_i| >0$, let $\kappa_2 = (\max\{1/|\lambda_j|: 0 < |\lambda_j| < 1, 1 \le j \le n\})^n$; take $\kappa = \max\{\kappa_1, \kappa_2\}$. Combine these observations, we conclude that for any $i \in \{1,\dots,n\}$, and any $t \ge n$,
\begin{equation}
\label{eq:Apowerliges}
A^t v_i = \sum_{j: \lambda_j = \lambda_i} \lambda_i^t \ p_{(j, i)}(t) \  v_j,
\end{equation}
where 
\begin{equation}
\label{eq:polybdd}
|\lambda_i^t \ p_{(j, i)}(t)| \le  \kappa \cdot t^n  |\lambda_i|^t,
\end{equation}
for some $\kappa \in \mathbb{R}_+$.

For any $T \ge n$, and any $t \in \mathbb{N}$, recall \eqref{eq:xttoliges}, \eqref{eq:Apowerliges}, we have
\begin{equation}
\label{eq:CAtx02p}
\begin{aligned}
CA^T x_t = &\sum_{i:v_i \in \mathcal{E}} [\alpha_t]_i  \sum_{j: \lambda_j = \lambda_i} \lambda_i^T p_{(j, i)}(T) C v_j \\
&+  \sum_{i:v_i \notin \mathcal{E}} [\alpha_t]_i  \sum_{j: \lambda_j = \lambda_i} \lambda_i^T p_{(j, i)}(T) C v_j. 
\end{aligned}
\end{equation} 
If $v_i \in \mathcal{E}$ and $\lambda_j = \lambda_i$, then $v_j \in \mathcal{E}$. Since $\mathcal{E}$ is in the kernel of $C$, for any $i$ such that  $v_i \in \mathcal{E}$, $Cv_j = 0$ for all $j$ such that $\lambda_j = \lambda_i$. Therefore 
$
\sum_{i:v_i \in \mathcal{E}} [\alpha_t]_i  \sum_{j: \lambda_j = \lambda_i} \lambda_i^T p_{(j, i)}(T) C v_j  = 0.
$
Continued from \eqref{eq:CAtx02p}, we have 
$
CA^T x_t =  \sum_{i:v_i \notin \mathcal{E}} [\alpha_t]_i  \sum_{j: \lambda_j = \lambda_i} \lambda_i^T p_{(j, i)}(T) C v_j. 
$
Recall \eqref{eq:polybdd}, we have
\begin{equation}
\begin{aligned}
\|CA^T x_t\|  \le  \sum_{i:v_i \notin \mathcal{E}} \kappa n \eta \cdot  |[\alpha_t]_i| \ T^n  \ |\lambda_i|^T, 
\end{aligned}
\end{equation}
where $\eta = \max\{ \|C v_j \|: 1 \le j \le n\}$. For any $i$ such that $v_i \notin \mathcal{E}$, $|\lambda_i|<1$. Let $\rho = \max\{|\lambda_i| : v_i \notin \mathcal{E}\}$, then $0 \le \rho < 1$. Consequently,
$
\|CA^T x_t\|  \le \sum_{i:v_i \notin \mathcal{E}} \kappa n \eta \cdot  |[\alpha_t]_i| T^n  \rho^T  \le \kappa n \eta \cdot T^n  \rho^T \sum_{i:v_i \notin \mathcal{E}}   |[\alpha_t]_i|.
$
By Lemma \ref{lem:Jordisss}, there is $b_1 > 0$ such that $ \sum_{i:v_i \notin \mathcal{E}}   |[\alpha_t]_i| < b_1$. 
We have
\begin{equation}
\|CA^T x_t\| \le b_1 \kappa n \eta   \cdot T^n  \rho^T,
\end{equation}
for any $T \ge n$, and any $t \in \mathbb{N}$. 
Note that $0 \le \rho <1$, therefore $\lim_{T \to \infty} T^n  \rho^T = 0$. 
Choose $T \in \mathbb{Z}_+$ such that  
$
T^n  \rho^T < \frac{d(\mathcal{A}, \mathcal{B})} {2  b_1 \kappa n \eta}, 
$
 we have 
$
\|CA^T x_t\| < d(\mathcal{A}, \mathcal{B})/2,
$
for all $t \in \mathbb{N}$. Then for all $t \ge T$,  
$\tilde{y}_t = CA^Tx_{t-T} + \sum_{\tau = t - T}^{t-1} CA^{t-1-\tau}Bu_\tau + Du_t
 \in B_{\frac{d(\mathcal{A}, \mathcal{B})}{2}}(\alpha),
$ and consequently \eqref{eq:ytinballunst} holds.

\end{proof}

We are now ready to show Theorem \ref{prop:SC-C1-G}.

\begin{proof} (Theorem \ref{prop:SC-C1-G}) \\
{\it Case 1: $A$ Hurwitz}.
First note that if $C$ is the zero matrix, then $y_t = Q(Du_t)$. Since $y_t$ can be determined by the knowledge of $u_t$, system \eqref{eq:Sofa-LTI} is (C1). Therefore in the following derivation, we only consider the case $C \neq {\bf 0}$.
Recall Lemma \ref{lem:tytindab2b}, let $T$ be  determined by \eqref{eq:chooseT-C1} such that \eqref{eq:ytinball} holds. Let $\hat{S}$ be constructed according to Definition \ref{def:FIIC} with this parameter $T$. 
Since $q_0 = q_o$, we have
\begin{equation}
\label{eq:storepastinput}
q_t = (u_{t-1}, u_{t-2}, \dots, u_{t-T}), \ \forall \ t \ge T.
\end{equation} where $\{u_t\}$ is the input of system \eqref{eq:Sofa-LTI}.

For any $t \ge T$, recall \eqref{eq:ob4LTI},  \eqref{eq:calalpha}, \eqref{eq:storepastinput}, we have 
$\hat{y}_t =  Q(\sum_{\tau = 1}^{T} CA^{\tau-1}B{u}_{t - \tau} + Du_t) 
= Q(\sum_{\tau = t-T}^{t-1} CA^{t- \tau -1}B{u}_{\tau} + Du_t) = Q(\alpha),$ 
where $\alpha = \sum_{\tau = t - T}^{t-1} CA^{t-1-\tau}Bu_\tau + Du_t$.
Recall Lemma \ref{lem:tytindab2b}, we have $\tilde{y}_t \in B_{\frac{d(\mathcal{A}, \mathcal{B})}{2}}(\alpha) \subset cl(B_{\frac{d(\mathcal{A}, \mathcal{B})}{2}}(\alpha))$. By Lemma \ref{lem:prop-of-Q}, we have $Q(\tilde{y}_t) = Q(\alpha)$. We conclude that $\hat{y}_t = Q(\alpha) = Q(\tilde{y}_t) = y_t$ for all $t \ge T$, and consequently system \eqref{eq:Sofa-LTI} is finite memory output observable.

{\it Case 2: $A$ is not Hurwitz}.
By Lemma \ref{lem:tytindab2bunst}, there exists a $T \in \mathbb{Z}_+$ such that for all $t \ge T$ :
$
\tilde{y}_t \in B_{\frac{d(\mathcal{A}, \mathcal{B})}{2}}(\alpha),
$ 
where $\alpha = \sum_{\tau = t - T}^{t-1} CA^{t-1-\tau}Bu_\tau + Du_t$ and $\alpha \in \mathcal{A}$. 
The rest of this derivation follows the exact same lines of the derivation for the case where $A$ is Hurwitz.
\end{proof}

We now shift our focus to deriving the necessary conditions.

\begin{proof} (Theorem \ref{prop:NC-C1})
The proof is by contradiction. 
Since $\mathcal{A} \cap \mathcal{B} \neq \varnothing$, there exist $t_1 \in \mathbb{N}$ and $\bf{u}^1$ such that $t_1 = \min\{t: F({\bf u},t) \in \mathcal{A} \cap \mathcal{B}\}$ and $F(u^1, t_1) \in \mathcal{B}$. The existence of the minimum is guaranteed by the well-ordering principle of nonnegative integers (pp. 28, \cite{Nicholson}).  $t_1$ being a minimum indicates that $F({\bf u^1},t)$ is not in $\mathcal{B}$ for any $t < t_1$. So we can define the following distance:
\begin{equation}
\label{eq:defnd1}
d_1 = \left\{ \begin{array}{ll}
d(\{0\}, \mathcal{B}),  & \textrm{if} \ t_1 = 0\\
d(\{0\} \cup \{F({\bf u^1},t): 0 \le t \le t_1 - 1\}, \mathcal{B}),  & \textrm{if} \ t_1 \ge 1
\end{array} \right.
\end{equation}
The definition of $t_1$ and $0 \notin \mathcal{B}$ imply $d_1 > 0$.

Assume that system \eqref{eq:Sofa-LTI} is finite memory output observable, than there exists an observer $\hat{S}$ \eqref{eq:observer} and $T$ such that for any $x_0 \in \mathbb{R}^n$, any ${\bf u} \in \mathcal{U}^{\mathbb{N}}$, $ \hat{y}_t = y_t $ for all $t \ge T$. Without loss of generality, we assume that $T \ge t_1$ (if $T< t_1$, just let $T = t_1$, then $ \hat{y}_t = y_t $ for all $t \ge T$ still holds).

Construct an input sequence $\bf{u}$ of system \eqref{eq:Sofa-LTI}. Given $\bf{u}^1$, use the truncated sequence of $\bf{u}^1$: $  \{u^1_t: 0 \le t \le t_1\}$,  the input sequence $\bf{u}$ is described as follows:
\begin{equation}
\label{eq:input-NC-C1}
u_t = \left\{ \begin{array}{ll}
 0, & 0 \le t \le T - t_1 - 1\\
 u^1_{t - (T - t_1 )}, & T - t_1  \le t \le T\\
 0, & t > T
  \end{array} \right.
\end{equation}
Basically we insert the truncated sequence of $  \{u^1_t: 0 \le t \le t_1\}$ into a  zero input. 
If distinct initial states $x^1_0$ and $x^2_0$ satisfy: 
\begin{equation}
\label{eq:necon1}
\|CA^t x^i_0\| < d_1/2,\qquad i = 1,2 
\end{equation}
for $t = 0,1 \cdots T-1$, then under input $\bf{u}$ \eqref{eq:input-NC-C1}, the corresponding outputs of the underlying LTI system, $\tilde{y}_t^1$
and $\tilde{y}_t^2$, satisfy:
$\tilde{y}_t^i \in B_{{d_1}/{2}}(\alpha), i = 1,2$,  
for some $\alpha \in \{0\} \cup \{F({\bf u^1},t): 0 \le t \le t_1 - 1\}$, for $t = 0,1 \cdots T-1$. Recall the definition of $d_1$ and Lemma \ref{lem:prop-of-Q}, we have $Q(\tilde{y}_t^1) = Q(\alpha) = Q(\tilde{y}_t^2)$. Consequently, we have
$y_t^1 = y_t^2, \ t = 0,1 \cdots T-1,$  
where $y_t^i$ is the output of system \eqref{eq:Sofa-LTI} when the initial state is $x_0^i$ and the input is $\bf{u}$ \eqref{eq:input-NC-C1}.

In addition, since $Q$ is not continuous at $F({\bf u},t) = F(u^1, t_1)$, for any $\delta > 0$, there is $z \in \mathbb{R}^p$ such that $Q(z + F({\bf u},t)) \neq Q(F({\bf u},t))$, and $\|z\|<\delta$. Since $rank(CA^T) = p$ by assumption, write $CA^T = [v_1 \ v_2 \ \cdots \ v_n]$, where $v_1, \dots, v_n \in \mathbb{R}^p$, then there is $\{i_1, i_2, \dots, i_p\} \subset \{1, 2, \dots, n\}$ such that $[v_{i_1} \ v_{i_2} \ \cdots \ v_{i_p}]$ is invertible. Let $V = [v_{i_1} \ v_{i_2} \ \cdots \ v_{i_p}]$, and let $K_A = \sup\{\|A^t\| : t = 0,1,2,\cdots\}$. choose $\delta = \frac{d_1}{2 \|V^{-1}\| \|C\| K_A} > 0$, 
then there is $z \in \mathbb{R}^p$ such that $Q(z + F({\bf u},t)) \neq Q(F({\bf u},t))$, and $\|z\|<\delta$. Let $w = V^{-1}z$, and write $w = [w_1 \ w_2 \ \cdots \ w_p]^T \in \mathbb{R}^p$, define  $x^* = [x_1^* \ x_2^* \ \cdots \ x_n^*]^T \in \mathbb{R}^n$ as: For all $1 \le j \le p$, $x_{i_j}^* = w_j$; for all $1 \le l \le n$ and $l \notin \{i_1, i_2, \dots, i_p\}$, $x_l^* = 0$. Then we have $CA^T x^* = Vw$, and $\|x^*\| = \|w\|$.

Consider two distinct initial conditions: $x_0^1 = 0, x_0^2 = x^*$. Then for all $t = 0,1 \cdots T-1$, $\|CA^t x^2_0\| \le   \|C\| K_A \|w\|  < \|C\| K_A \|V^{-1}\| \cdot \frac{d_1}{2 \|V^{-1}\| \|C\| K_A} = d_1/2$. Therefore \eqref{eq:necon1} holds, and consequently $y_t^1 = y_t^2, \ t = 0,1 \cdots T-1$. At $t = T$, $y_T^1 = Q(F({\bf u},t))$, and $y_T^2 = Q(CA^Tx^*+F({\bf u},t)) = Q(z + F({\bf u},t)) \neq Q(F({\bf u},t))$, therefore $y_T^1 \neq y_T^2$.

Since system \eqref{eq:Sofa-LTI} is assumed to be finite memory output observable, let $\hat{y}_t^1$ and $\hat{y}_t^2$ be the output of the corresponding $\hat{S}$ when the input is $\bf{u}$ \eqref{eq:input-NC-C1} and  initial conditions are  $x_0^1, x_0^2 $ respectively.
Then at $t = T$, recall \eqref{eq:observer}, we have $\hat{y}_T^1 = g(f(\dots f(f(q_0, {u}_0, y_0^1), {u}_1, y_1^1)\dots, {u}_{T-1}, y_{T-1}^{1}), {u}_T)$, and  $\hat{y}_T^2 = g(f(\dots f(f(q_0, {u}_0, y_0^2), {u}_1, y_1^2)\dots, {u}_{T-1}, y_{T-1}^{2}), {u}_T)$. Recall that $y_t^1 = y_t^2, \ t = 0,1 \cdots T-1$, we have $\hat{y}_T^1 = \hat{y}_T^2$. Since $y_T^1 \neq y_T^2$, 
there is $i \in \{1,2\}$ such that $\hat{y}_T^i \neq y_T^i$. This is a contradiction with system \eqref{eq:Sofa-LTI} being finite memory output observable.
\end{proof}

\begin{proof} (Theorem \ref{prop:NC-2-C1})
Since $\mathcal{V}$ is not in the kernel of $C$, there is $v \in \mathcal{V}$ such that $Cv \neq 0$. Without loss of generality, let $\|Cv\|_1 = 1$.
Since $v \in \mathcal{V}$, we have $Av = \lambda v$ for some $|\lambda| > 1$, $\lambda \in \mathbb{C}$.  Next, we define a set $\mathcal{O}$ as
\begin{equation}
\label{eq:seto}
\mathcal{O} = \{ \alpha \in \mathbb{R}_+ | Re(\gamma Cv) \in Q^{-1}(Q(0)), \ \forall \ |\gamma| \le \alpha, \gamma \in \mathbb{C}\}.   
\end{equation}

Next, we show that $\mathcal{O}$ is non-empty and bounded. Write $Cv = [v_1 \ v_2 \ \dots \ v_n]^T$, where $v_1, \dots, v_p \in \mathbb{C}$ and $|v_1| + \dots + |v_p| = 1$. For any $\gamma \in \mathbb{C}$, we have $|Re(\gamma v_i)| \le |\gamma\|v_i|$, therefore
$
\|Re(\gamma Cv) \|_1 = \sum_{i=1}^p |Re(\gamma v_i)| \le |\gamma| \sum_{i=1}^p |v_i| = |\gamma|.
$
Since $Q$ is a piecewise constant function, and $0 \notin \mathcal{B}$, there is $r>0$ such that $B_r(0) \subset Q^{-1}(Q(0))$, where $B_r(0) = \{x \in \mathbb{R}^p : \|x\|_1 < r\}$. Therefore, for all $\gamma$ with $|\gamma| \le r/2$, $Re(\gamma Cv) \in B_r(0)$. Consequently $r/2 \in \mathcal{O}$, and $\mathcal{O}$ is nonempty.

Next, we show that $\mathcal{O}$ is bounded. Since $Q^{-1}(Q(0))$ is bounded by assumption, let $Q^{-1}(Q(0)) \subset B_\sigma (0)$ for some $\sigma >0$. Since $Cv = [v_1 \ v_2 \ \dots \ v_n]^T \neq 0$, let $|v_k| >0$ for some $1 \le k \le n$. Write $v_k$ as $v_k = |v_k| e^{i \phi}$ for some $\phi \in [0, 2 \pi)$. Assume $\mathcal{O}$ is unbounded, then there exist $\alpha \in \mathcal{O}$ with $\alpha > 2\sigma / |v_k|$. Let $\gamma = ({2\sigma}/{|v_k|}) e^{i(-\phi)}$, then $|\gamma| < \alpha$. By the definition of $\mathcal{O}$ \eqref{eq:seto}, we have $Re(\gamma Cv) \in Q^{-1}(Q(0))$. Observe that
$
\|Re(\gamma Cv)\|_1 \ge |Re(\gamma v_k)| = |Re(\frac{2\sigma}{|v_k|} e^{i(-\phi)} |v_k| e^{i \phi})| = |Re(2\sigma)| = 2\sigma.
$
Therefore $Re(\gamma Cv) \notin B_\sigma (0)$, and consequently $Re(\gamma Cv) \notin Q^{-1}(Q(0))$, which draws a contradiction. Therefore $\mathcal{O}$ is bounded.

Next, we define $\beta = \sup \mathcal{O}$. Since $\mathcal{O}$ is non-empty and bounded, we have $0 < \beta < \infty$. Then for any $\epsilon > 0$, there is $\kappa \in \mathbb{C}$ such that 
\begin{equation}
\label{eq:kapniz}
Re(\kappa Cv) \notin Q^{-1}(Q(0)), \ \textrm{and} \ \beta \le |\kappa| < \beta + \epsilon.
\end{equation}
and we will apply this observation to prove Theorem \ref{prop:NC-2-C1} by contradiction.

Assume system \eqref{eq:Sofa-LTI} is finite memory output observable, than there exists an observer $\hat{S}$ \eqref{eq:observer} and $T \in \mathbb{Z}_+$ such that $y_t = \hat{y}_t$ for all $t \ge T$. Consider the input $u_t \equiv 0$, for two initial states $x_0^1$, $x_0^2 \in \mathbb{R}^n$,  we use $y_t^1$, $y_t^2$ to denote the outputs of system \eqref{eq:Sofa-LTI} respectively. Choose $x_0^1 = 0$, then $y_t^1 = Q(0)$ for all $t \in \mathbb{N}$. In \eqref{eq:kapniz}, let $\epsilon = \beta(|\lambda|-1)$, and choose 
$
x_0^2 = Re(\frac{\kappa}{\lambda^T} v).
$
Then for all $0 \le t \le T-1$,
$
CA^t x_0^2 = Re(\frac{\kappa}{\lambda^T} CA^t v) = Re(\frac{\kappa \lambda^t}{\lambda^T} Cv).
$
Since $|{\kappa \lambda^t}/{\lambda^T}| < \beta$, by \eqref{eq:seto}, we see that $CA^t x_0^2 \in Q^{-1}(Q(0))$, and consequently $y_t^2 = Q(0)$ for all $0 \le t \le T-1$. At $t = T$, $CA^T x_0^2 = Re(\kappa Cv) \notin Q^{-1}(Q(0))$, therefore $y_T^2 \neq Q(0)$. 
Now we see that $y_t^1 = y_t^2$ for $0 \le t \le T-1$, and $y_T^1 \neq y_T^2$. Similar to the proof of Theorem \ref{prop:NC-C1}, we can show that $\hat{y}_T^1 = \hat{y}_T^2$, and therefore either $\hat{y}_T^1 \neq \tilde{y}^1_T$ or $\hat{y}_T^2 \neq y_T^2$ or both, and we conclude that system \eqref{eq:Sofa-LTI} is not finite memory output observable.
\end{proof}


\subsection{Algorithmic Verification of the Conditions}
\label{sec:Algo-C1}

A natural question one might ask is how to determine $d(\mathcal{A}, \mathcal{B})$ for sets $\mathcal{A}$ and $\mathcal{B}$ defined in \eqref{eq:set-A} and \eqref{eq:set-B}, respectively.
In this section, we propose an algorithm to compute the distance between sets $\mathcal{A}$ and $\mathcal{B}$ and to determine whether their intersection is empty, 
for the case where matrix $A$ of system \eqref{eq:Sofa-LTI} is Schur-stable ($\rho(A)<1$).

\begin{algorithm}[H]
\caption{Computes $d(\mathcal{A}, \mathcal{B})$, determines if $\mathcal{A} \cap \mathcal{B} =\emptyset$ \label{algo:Compute-C1}}
\begin{algorithmic}[1] 
\Statex {\bf Input}: Matrix $A$, $B$, $C,D$, set $\mathcal{U}$, quantizer $Q$
\State {\bf Compute}:  $h = \max \{\|Bu\|: u \in \mathcal{U}\}$
\State {\bf Compute}:  $s = \sup_{T \ge 0} \sum_{t = 0}^T \|A^t\|$
\State $k \leftarrow 1$.
\Loop
\If {k = 1}
\State {\bf Compute}: $\mathcal{C}_1 = \{Du_1 : u_1 \in \mathcal{U}\} $
\Else 
\State {\bf Compute}:  $\mathcal{C}_k = \{Du_1 + CBu_2 + CABu_3 + \cdots + CA^{k-2}Bu_k : u_1, \dots , u_k \in \mathcal{U}\} $
\EndIf
\State {\bf Compute}: $d_k = \min\{d(y, \mathcal{B}) : y \in \mathcal{C}_k \}$
\If {$d_k > hs \|C\| \|A^{k-1}\|$}
\State {\bf Return}: $\uline{d} = d_k - hs \|C\| \|A^{k-1}\|$
\State {\bf Exit} the loop
\ElsIf {$\mathcal{C}_k \cap \mathcal{B} \neq \varnothing$}
\State {\bf Return}: $ y^* \in \mathcal{C}_k \cap \mathcal{B}$
\State {\bf Exit} the loop
\EndIf
\State $k \leftarrow k+1$.
\EndLoop
\end{algorithmic}
\label{algo}
\end{algorithm}

\begin{lemma}
Given system \eqref{eq:Sofa-LTI}, assume $\rho(A) < 1$ and $0 \in \mathcal{U}$. Consider sets $\mathcal{A}$ and $\mathcal{B}$ defined in \eqref{eq:set-A} and \eqref{eq:set-B} respectively, the following holds:
\renewcommand{\theenumi}{\roman{enumi}}
\begin{enumerate}
\item \label{it:dABg0} $d(\mathcal{A}, \mathcal{B}) > 0$ if and only if Algorithm \ref{algo:Compute-C1} terminates and returns $\uline{d}$, which satisfies $\uline{d} \le d(\mathcal{A}, \mathcal{B})$.
\item $\mathcal{A} \cap \mathcal{B} \neq \varnothing$ if and only if Algorithm \ref{algo:Compute-C1} terminates and returns $y^*$, which satisfies $y^* \in \mathcal{A} \cap \mathcal{B}$.
\end{enumerate}
\end{lemma}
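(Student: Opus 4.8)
The plan is to prove the two equivalences by establishing two facts about the sets $\mathcal{C}_k$ appearing in Algorithm~\ref{algo:Compute-C1}: first, that $\bigcup_{k\ge 1}\mathcal{C}_k$ is dense in $\mathcal{A}$ (indeed, it is precisely the set of \emph{truncated} forced responses, and every element of $\mathcal{A}$ is a limit of such truncations); second, that the ``tail'' contributed by inputs older than $k$ steps is uniformly bounded by $hs\|C\|\|A^{k-1}\|$, which $\to 0$ as $k\to\infty$ since $\rho(A)<1$. I would first record the key estimate: for any $\mathbf{u}\in\mathcal{U}^{\mathbb N}$ and any $t\ge k-1$, writing $\alpha_k := Du_t + \sum_{\tau=1}^{k-1} CA^{\tau-1}Bu_{t-\tau} \in \mathcal{C}_k$, we have
\begin{equation*}
\|F(\mathbf{u},t) - \alpha_k\| = \Big\|\sum_{\tau=k}^{t} CA^{\tau-1}Bu_{t-\tau}\Big\| \le \|C\|\,h\sum_{\tau=k}^{\infty}\|A^{\tau-1}\| \le h s \|C\|\,\|A^{k-1}\|,
\end{equation*}
using $\|A^{\tau-1}\|\le\|A^{k-1}\|\|A^{\tau-k}\|$ and $\sum_{j\ge 0}\|A^j\|=s$ (finite by $\rho(A)<1$, pp.~299,~\cite{Horn}). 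Conversely, for $t<k-1$ one has $F(\mathbf{u},t)\in\mathcal{C}_k$ exactly (the missing terms are hit by choosing $u_{t+1}=\cdots=0$, which is legal since $0\in\mathcal{U}$). Hence every element of $\mathcal{A}$ lies within distance $hs\|C\|\|A^{k-1}\|$ of $\mathcal{C}_k$, and $\mathcal{C}_k\subseteq\mathcal{A}$.

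**Termination.** I would next argue the loop always terminates. If $\mathcal{A}\cap\mathcal{B}\ne\varnothing$, pick $\alpha^\star\in\mathcal{A}\cap\mathcal{B}$ realized as $F(\mathbf{u},t_0)$; for $k>t_0+1$ we get $\alpha^\star\in\mathcal{C}_k$ exactly, so $d_k=0$ and the second branch fires (note $d_k=0$ forces the first branch to fail since $hs\|C\|\|A^{k-1}\|\ge 0$). If $\mathcal{A}\cap\mathcal{B}=\varnothing$ but $d(\mathcal{A},\mathcal{B})>0$: using the estimate above, $d_k = d(\mathcal{C}_k,\mathcal{B}) \ge d(\mathcal{A},\mathcal{B}) - hs\|C\|\|A^{k-1}\|$... wait, that inequality goes the wrong way for $d_k$; instead I'd use that $\mathcal{C}_k\subseteq\mathcal{A}$ gives $d_k \ge d(\mathcal{A},\mathcal{B})$, and since $hs\|C\|\|A^{k-1}\|\to 0$ there is a $k$ with $d(\mathcal{A},\mathcal{B}) > hs\|C\|\|A^{k-1}\|$, whence $d_k > hs\|C\|\|A^{k-1}\|$ and the first branch fires. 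The remaining case, $d(\mathcal{A},\mathcal{B})=0$ and $\mathcal{A}\cap\mathcal{B}=\varnothing$ (the distance is an infimum not attained), is where termination could fail, so I must handle it: I claim this case is impossible for this algorithm to run forever, or rather — and this is the subtle point — in this case the algorithm does \emph{not} terminate, and that is consistent with the lemma statement, which asserts termination \emph{iff} $d(\mathcal{A},\mathcal{B})>0$ (resp. $\mathcal{A}\cap\mathcal{B}\ne\varnothing$). So I need only show: termination via the first branch $\Rightarrow d(\mathcal{A},\mathcal{B})>0$, and termination via the second branch $\Rightarrow \mathcal{A}\cap\mathcal{B}\ne\varnothing$.

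**Correctness of returned values.** For part (ii)'s nontrivial direction: if the second branch returns $y^\star\in\mathcal{C}_k\cap\mathcal{B}$, then $y^\star\in\mathcal{C}_k\subseteq\mathcal{A}$ and $y^\star\in\mathcal{B}$, so $y^\star\in\mathcal{A}\cap\mathcal{B}$, done; combined with the termination argument above this gives the full equivalence. For part (i)'s nontrivial direction: if the first branch returns $\underline d = d_k - hs\|C\|\|A^{k-1}\| > 0$, I must show $\underline d \le d(\mathcal{A},\mathcal{B})$ and in particular $d(\mathcal{A},\mathcal{B})>0$. Take any $\alpha\in\mathcal{A}$ and $\beta\in\mathcal{B}$; by the density estimate there is $\alpha_k\in\mathcal{C}_k$ with $\|\alpha-\alpha_k\|\le hs\|C\|\|A^{k-1}\|$, so $\|\alpha-\beta\|\ge\|\alpha_k-\beta\| - hs\|C\|\|A^{k-1}\| \ge d_k - hs\|C\|\|A^{k-1}\| = \underline d$; taking infimum over $\alpha,\beta$ gives $d(\mathcal{A},\mathcal{B})\ge\underline d>0$. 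Combined with the termination direction this closes part (i).

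**Anticipated main obstacle.** The genuinely delicate point is the boundary case $d(\mathcal{A},\mathcal{B}) = 0$ with empty intersection: I must make sure the lemma's ``if and only if'' is read correctly (the algorithm is not claimed to terminate then), and I must verify the first branch never spuriously fires in that case — which follows because it requires $d_k > hs\|C\|\|A^{k-1}\|\ge d(\mathcal{C}_k,\mathcal{B})\ge$... here I need $d_k = d(\mathcal{C}_k,\mathcal{B})$ can legitimately exceed $hs\|C\|\|A^{k-1}\|$ only if $d(\mathcal{A},\mathcal{B}) \ge d_k - hs\|C\|\|A^{k-1}\| > 0$, contradicting $d(\mathcal{A},\mathcal{B})=0$; and the second branch requires an actual element of $\mathcal{C}_k\cap\mathcal{B}\subseteq\mathcal{A}\cap\mathcal{B}$, contradicting emptiness. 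A secondary technical point worth stating carefully is that each $\mathcal{C}_k$ is a \emph{finite} set (as $\mathcal{U}$ is finite), so the minima $d_k=\min\{d(y,\mathcal{B}):y\in\mathcal{C}_k\}$ are well-defined and the membership test $\mathcal{C}_k\cap\mathcal{B}\ne\varnothing$ is decidable; and that $d(y,\mathcal{B})$ itself is well-defined (an infimum of a nonempty set of nonnegatives), with the convention that if $\mathcal{B}=\varnothing$ the second-branch case never arises and $d_k=+\infty$ so the first branch fires at the first $k$ — a degenerate subcase I would dispatch in one line.
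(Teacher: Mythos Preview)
Your proposal is correct and follows essentially the same approach as the paper: both proofs rest on the inclusion $\mathcal{C}_k\subseteq\mathcal{A}$, the tail estimate $\|F(\mathbf{u},t)-\alpha_k\|\le hs\|C\|\|A^{k-1}\|$ (the paper states this bound without spelling out the submultiplicativity step you make explicit), and the fact that $\|A^{k-1}\|\to 0$. Your treatment is somewhat more careful than the paper's in two respects: you explicitly verify that the two branches of the algorithm cannot both be eligible (the paper's forward direction for item (ii) asserts termination at $k^*=\min\{k:\mathcal{C}_k\cap\mathcal{B}\ne\varnothing\}$ without checking that the first branch does not fire earlier), and you discuss the boundary case $d(\mathcal{A},\mathcal{B})=0$ with $\mathcal{A}\cap\mathcal{B}=\varnothing$, which the paper leaves implicit.
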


\begin{proof}
First we show item \ref{it:dABg0}). Assume $d(\mathcal{A}, \mathcal{B}) > 0$. Since $\rho(A)<1$,  $\lim_{k \to \infty} \|A^k\| = 0$.
Therefore there is $N \in \mathbb{Z}_+$ such that $hs \|C\| \|A^{k-1}\| < d(\mathcal{A}, \mathcal{B})$ for all $k \ge N$. Recall \eqref{eq:set-A}, we see that $\mathcal{C}_k \subset \mathcal{A}$ for all $k \in \mathbb{Z}_+$. Consequently, $d(\mathcal{C}_k, \mathcal{B}) \ge d(\mathcal{A}, \mathcal{B})$. Since $\mathcal{C}_k$ is a finite set, we see that $d(\mathcal{C}_k, \mathcal{B}) = \min\{d(y, \mathcal{B}) : y \in \mathcal{C}_k \} = d_k$. Therefore, $hs \|C\| \|A^{k-1}\| < d_k$ for all $k \ge N$, and the set $\{k \in \mathbb{Z}_+ : d_k > hs \|C\| \|A^{k-1}\|\}$ is nonempty. Let $k^* = \min\{k \in \mathbb{Z}_+ : d_k > hs \|C\| \|A^{k-1}\|\}$, we see that Algorithm \ref{algo:Compute-C1} terminates at $k = k^*$, and returns $\uline{d} = d_{k^*} - hs \|C\| \|A^{k^*-1}\|$.

Next, we show the other direction of item \ref{it:dABg0}). Assume Algorithm \ref{algo:Compute-C1} terminates at some $k = k^*$, and returns $\uline{d}$. For any $\alpha \in \mathcal{A}$, recall \eqref{eq:set-A}, 
$
\alpha = \sum_{\tau = 0}^{t-1} CA^{t-1-\tau}Bu_\tau + Du_t,  
$
for some  $t \in \mathbb{N}$, and some $u_0, u_1,\dots, u_t \in \mathcal{U}$. Since $0 \in \mathcal{U}$, there are $t' \ge k^*$, and $u_0', u_1',\dots, u_{t'}' \in \mathcal{U}$ such that 
$
\alpha = \sum_{\tau = 0}^{t'-1} CA^{t'-1-\tau}Bu_\tau' + Du_{t'}' = \sum_{\tau = 0}^{t-1} CA^{t-1-\tau}Bu_\tau + Du_t,  
$
where $u_0', u_1',\dots, u_{t'}'$ is either identical to $u_0, u_1,\dots, u_t$, or consists of $u_0, u_1,\dots, u_t$ and a sequence of zero input of appropriate length at the beginning. Since $t' \ge k^*$, we have
$
\alpha = \sum_{\tau = 0}^{t'-k^*} CA^{t'-1-\tau}Bu_\tau' + \sum_{\tau = t'-k^* + 1}^{t'-1} CA^{t'-1-\tau}Bu_\tau' + Du_{t'}'. 
$
We can show that  
$
\|\sum_{\tau = 0}^{t'-k^*} CA^{t'-1-\tau}Bu_\tau' \| \le h s \|C\| \|A^{k^*-1}\|.
$
Let $c = \sum_{\tau = t'-k^* + 1}^{t'-1} CA^{t'-1-\tau}Bu_\tau' + Du_{t'}'$, we note that 
$
c = \sum_{\tau = t'-k^* + 1}^{t'-1} CA^{t'-1-\tau}Bu_\tau' + Du_{t'}' \in \mathcal{C}_{k^*}.
$
Then for any $\beta \in \mathcal{B}$, recall $d_{k^*} = d(\mathcal{C}_{k^*}, \mathcal{B})$
$
\|\alpha - \beta\| \ge \|c - \beta\| - \|\alpha - c\|  \ge d_{k^*} - h s \|C\| \|A^{k^*-1}\|.
$
Since the choices of $\alpha \in \mathcal{A}$ and  $\beta \in \mathcal{B}$ are arbitrary, we see that $d(\mathcal{A}, \mathcal{B}) \ge d_{k^*} - h s \|C\| \|A^{k^*-1}\|$. Since Algorithm \ref{algo:Compute-C1} terminates at $k = k^*$, we see that $\uline{d} = d_{k^*} - h s \|C\| \|A^{k^*-1}\| > 0$, and consequently $d(\mathcal{A}, \mathcal{B}) \ge \uline{d} > 0$. This completes the proof for item \ref{it:dABg0}).

For the second item, assume $\mathcal{A} \cap \mathcal{B} \neq \varnothing$, and let $\alpha_1 \in \mathcal{A} \cap \mathcal{B}$. Recall \eqref{eq:set-A}, we see that $\alpha_1 \in \mathcal{C}_k$ for some $k \in \mathbb{Z}_+$. Therefore, the set $\{k \in \mathbb{Z}_+ : \mathcal{C}_k \cap \mathcal{B} \neq \varnothing \}$ is nonempty. Let $k^* = \min\{k \in \mathbb{Z}_+ : \mathcal{C}_k \cap \mathcal{B} \neq \varnothing \}$, we see that Algorithm \ref{algo:Compute-C1} terminates at $k = k^*$, and returns $y^*$. 
For the backward implication, assume Algorithm \ref{algo:Compute-C1} terminates at some $k \in \mathbb{Z}_+$, and returns $y^*$. Recall \eqref{eq:set-A}, we see that $\mathcal{C}_k \subset \mathcal{A}$. Since $y^* \in \mathcal{C}_k \cap \mathcal{B}$,  we have $y^* \in \mathcal{A} \cap \mathcal{B}$, and consequently $\mathcal{A} \cap \mathcal{B} \neq \varnothing$. This completes the proof of the second item.
\end{proof}

\section{Weak and Asymptotic Output Observability}
\label{sec:WO}

\subsection{Main Results}

Recall that the sufficient conditions for finite memory output observability stated in the previous section are automatically sufficient conditions for weak output observability (Lemma \ref{lem:ClassCimp}). We thus focus on necessary conditions in this section. 

We begin by introducing the concept of {\it unobservable input-output segments}, which will be instrumental in formulating a necessary condition for weak output observability. For any $({\bf u, y}) \in \mathcal{U}^\mathbb{N} \times \mathcal{Y}^\mathbb{N}$ and any $T \in \mathbb{Z_+}$, we use $(\{u_t\}_{t = 0}^T, \{y_t\}_{t = 0}^T)$ to denote the segment of  $({\bf u, y}) $ from time $0$ to time $T$.  

\begin{mydef}
\label{def:NC-C2}
Given a system $P$ \eqref{eq:Sofa} with $|\mathcal{U}| < \infty, 1<|\mathcal{Y}|< \infty$, consider a family $\Psi$ of input and output segments, 
\begin{equation}
\label{eq:IOfampsi}
\Psi = \{\{(\{u_t^{(k,j)}\}_{t = 0}^{T_{(k,j)}}, \{y_t^{(k,j)}\}_{t = 0}^{T_{(k,j)}})\}_{j=1}^{2^k}\}_{k=1}^\infty.
\end{equation}
$\Psi$ is said to be \emph{unobservable} if it satisfies the following items: 
\renewcommand{\theenumi}{\roman{enumi}}
\begin{enumerate}
\item \label{item:IOfamydif1}For any $k \ge 1$, and  any $j \in \{1,2,\dots, 2^{k-1}\}$,
\begin{subequations}
\label{eq:IOfamydif1}
\begin{align}
\label{eq:IOfamyTeq}
& \quad T_{(k,2j-1)}  = T_{(k,2j)}, \\
\label{eq:IOfamyTeqioeq}
& \left\{ \begin{array}{l}
u_t^{(k,2j-1)}  = u_t^{(k,2j)}, \\
 y_t^{(k,2j-1)}  = y_t^{(k,2j)}, 
\end{array} \right.
 0 \le t \le T_{(k,2j-1)} -1,\\
\label{eq:IOfamydif1s}
& \left\{ \begin{array}{l} u_t^{(k,2j-1)}  = u_t^{(k,2j)}, \\
 y_t^{(k,2j-1)}  \neq y_t^{(k,2j)}, 
\end{array} \right.
 t = T_{(k,2j-1)}.
\end{align}
\end{subequations}
\item \label{item:IOfamconn}For any $k \ge 2$, and any $j \in \{1,2,\dots, 2^{k-1}\}$,
\begin{subequations}
\begin{align}
\label{eq:IOfamyTinc}
& \quad T_{(k,2j)}  >  T_{(k-1,j)}, \\
\label{eq:IOfamyuinc}
& \left\{ \begin{array}{l}
u_t^{(k,2j)}  = u_t^{(k-1,j)}, \\
 y_t^{(k,2j)} = y_t^{(k-1,j)}, 
\end{array} \right.
 0 \le t \le T_{(k-1,j)}.
\end{align}
\end{subequations}
\item \label{item:IOcomplete}For any sequence $\{j(k)\}_{k=1}^\infty$ that satisfies $j(k) \in \{1,\dots, 2^k\}$ and $j(k+1) \in \{2j(k)-1, 2j(k)\}$, define $({\bf u, y}) \in \mathcal{U}^\mathbb{N} \times \mathcal{Y}^\mathbb{N}$ as
\begin{equation}
\label{eq:IOcompletedef}
\begin{aligned}
& \left\{ \begin{array}{l}
u_t  = u_t^{(1,j(1))}, \\
 y_t  = y_t^{(1,j(1))}, 
\end{array} \right.  0 \le t  \le T_{(1, j(1))}, \\
& \left\{ \begin{array}{l}
u_t  = u_t^{(k,j(k))}, \\
 y_t  = y_t^{(k,j(k))}, 
\end{array} \right. 
T_{(k-1,j(k-1))} < t  \le T_{(k,j(k))}, k \ge 2,
\end{aligned}
\end{equation}
then $({\bf u, y})$ satisfies
\begin{equation}
\label{eq:IOcomplete}
({\bf u, y}) \in P.
\end{equation}
\end{enumerate}
\hfill $\square$ 
\end{mydef}

\begin{figure}[H]
\centering
\includegraphics[scale = 0.3]{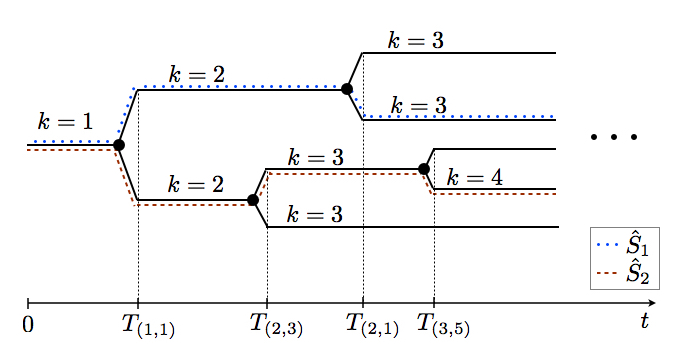} 
\caption{Illustration of $\Psi$. \label{Fig:illus-NC-C2}}
\end{figure}
We use Figure \ref{Fig:illus-NC-C2} to provide some intuition on $\Psi$. The tree structure represents the family $\Psi$ of input and output segments of $P$. Each branch represents two pairs of input and output segments: When the branch bifurcates, it means that the two corresponding outputs are different at this time instant. This exact process is formulated in \eqref{eq:IOfamydif1}. Given an observer $\hat{S}_1$ of $P$, the output of $\hat{S}_1$ will be different from (at least) one of the branches at $t = T_{(1,1)}$, as shown in the figure. Following this branch, the estimation error occurs at one step. Similarly, the output of $\hat{S}_1$ will be different from one of the sub-branches at $t = T_{(2,1)}$, and the estimation error occurs at two steps. Repeating this argument, we see that the estimation error occurs infinitely often for $\hat{S}_1$.  For another observer $\hat{S}_2$, the branches associated with the estimation errors may be different from the ones of $\hat{S}_1$, as shown in Figure \ref{Fig:illus-NC-C2}. Therefore, we propose this tree structure to guarantee that estimation error occurs infinitely often for \emph{any} observer, and consequently the plant $P$ is not weakly output observable. We formalize this result as follows.

\begin{theorem}
\label{prop:NC-C2}
Given a system $P$ \eqref{eq:Sofa} with $|\mathcal{U}| < \infty, 1<|\mathcal{Y}|< \infty$, 
if there is an unobservable family $\Psi$ \eqref{eq:IOfampsi} that satisfies items \ref{item:IOfamydif1}) through \ref{item:IOcomplete}), then system $P$ is not weakly output observable.
\hfill \qedsymbol
\end{theorem}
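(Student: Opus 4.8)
The plan is to argue by contradiction: suppose $P$ is weakly output observable, so there exists a candidate observer $\hat{S}$ as in \eqref{eq:observer} for which $\gamma = 0$ is an observation gain bound of $(P,\hat{S})$. By the Remark following Lemma \ref{lem:ClassCimp}, since $\mathcal{Y}$ is finite this means: for every $({\bf u},{\bf y}) \in P$ we have $\hat{y}_t = y_t$ for all $t$ sufficiently large. I will produce a single $({\bf u},{\bf y}) \in P$ obtained as a branch through the tree $\Psi$ for which the estimation error $e_t = y_t - \hat{y}_t$ is nonzero infinitely often, contradicting this.

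The construction of that branch is the heart of the argument, and it is essentially a greedy walk down the tree, guided by $\hat{S}$. First I would observe that, because the observer $\hat{S}$ has no feedthrough from $y_t$ in \eqref{eq:obsr_yhat}, its output $\hat{y}_t$ at time $t$ is a deterministic function of $q_0$ together with the input/output history $(\{u_s\}_{s=0}^{t-1},\{y_s\}_{s=0}^{t-1})$ and the current input $u_t$ — exactly as exploited at the end of the proof of Theorem \ref{prop:NC-C1}. Now I build the index sequence $\{j(k)\}_{k=1}^\infty$ inductively. For $k=1$: items \ref{item:IOfamydif1}) give two segments $(k,1)=(1,1)$ and $(1,2)$ that agree in input and output on $0 \le t \le T_{(1,1)}-1$, have equal inputs at $t = T_{(1,1)}$, but differ in output there. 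Feed the common prefix (and the common input at $T_{(1,1)}$) to $\hat{S}$; its single output $\hat{y}_{T_{(1,1)}}$ can equal at most one of $y^{(1,1)}_{T_{(1,1)}}, y^{(1,2)}_{T_{(1,1)}}$. Pick $j(1) \in \{1,2\}$ to be an index where it does \emph{not} match, forcing $e_{T_{(1,1)}} \neq 0$ along that choice. Inductively, having chosen $j(k)$, item \ref{item:IOfamconn}) tells us the two children segments $(k+1, 2j(k)-1)$ and $(k+1, 2j(k))$ both extend the segment $(k, j(k))$ and agree with each other on inputs/outputs up to $T_{(k+1, 2j(k))} - 1$, with equal inputs but differing outputs at time $T_{(k+1,2j(k))} > T_{(k,j(k))}$; again feed this common history to $\hat{S}$ and choose $j(k+1) \in \{2j(k)-1, 2j(k)\}$ so that $\hat{y}_{T_{(k+1,j(k+1))}} \neq y^{(k+1,j(k+1))}_{T_{(k+1,j(k+1))}}$.

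With $\{j(k)\}$ in hand, item \ref{item:IOcomplete}) produces a genuine trajectory $({\bf u},{\bf y}) \in P$ via \eqref{eq:IOcompletedef}: the nesting conditions \eqref{eq:IOfamyTinc}–\eqref{eq:IOfamyuinc} guarantee the pieces glue into a well-defined infinite sequence, and \eqref{eq:IOcomplete} certifies it lies in $P$. Because $\hat{S}$ is causal with no $y$-feedthrough, when $P$ is interconnected with $\hat{S}$ and fed this particular ${\bf u}$, the observer sees exactly the histories we used at each stage $k$ (the prefix agreement built into items \ref{item:IOfamydif1}) and \ref{item:IOfamconn}) is what makes this consistent), so $\hat{y}_{T_{(k,j(k))}} \neq y_{T_{(k,j(k))}}$ for every $k$. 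Since $T_{(k,j(k))} \to \infty$ by \eqref{eq:IOfamyTinc}, the error $e_t \neq 0$ for infinitely many $t$, contradicting weak output observability, and the theorem follows. The step I expect to be the main obstacle — and the one requiring the most care in writing — is verifying that the observer's output along the assembled trajectory really does agree with the value computed from the truncated family segments at each stage; this rests on a careful bookkeeping argument that $\hat{S}$'s state at time $T_{(k,j(k))}$ depends only on data shared by the whole subtree below node $(k,j(k))$, analogous to the "$\hat{y}^1_T = \hat{y}^2_T$" step in the proof of Theorem \ref{prop:NC-C1} but iterated infinitely.
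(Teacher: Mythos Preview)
Your proposal is correct and follows essentially the same route as the paper: the paper first records the equivalence that $P$ is not weakly output observable iff for every observer some $({\bf u},{\bf y})\in P$ yields infinitely many mismatches (Lemma~\ref{lem:C2-neg-eq}), then builds the sequence $\{j(k)\}$ by exactly the greedy tree-walk you describe (Lemma~\ref{lem:famobop}), verifies via item~\ref{item:IOcomplete}) that the assembled pair lies in $P$, and finishes by the bookkeeping argument you flagged---showing the observer's output along the full trajectory agrees with the outputs computed from the truncated segments at each $T_{(k,j(k))}$. The only cosmetic difference is that you phrase it as a contradiction while the paper works directly with the contrapositive characterization.
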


Essentially, item \ref{item:IOfamydif1}) requires that two output differs at one step but are identical at all previous steps, item \ref{item:IOfamconn}) connects the adjacent stages of $\Psi$, and item \ref{item:IOcomplete}) requires that the branches of $\Psi$ are feasible input and output signals of $P$.  

While the hypotheses in Theorem \ref{prop:NC-C2} seem abstract, one might be interesting in explicitly identifying instances of systems that satisfy these hypotheses. Thus for the specific class of systems \eqref{eq:Sofa-LTI}, we will now present concrete instances of Theorem \ref{prop:NC-C2}. For the purpose of exposition, we consider system \eqref{eq:Sofa-LTI} with $p=1$ and assume the quantizer $Q$ to be right-continuous.

\begin{theorem}
\label{prop:NC-C2-C3}
Consider system \eqref{eq:Sofa-LTI} with $p = 1$,  $0 \in \mathcal{U}$, and $Q$ in the form of \eqref{eq:quat1d}. Assume $\mathcal{B} \cap \mathbb{R}_+ \neq \varnothing$ and let $\beta = \argmin \mathcal{B} \cap \mathbb{R}_+$. If there exist  an eigenvalue-eigenvector pair $(\lambda, v)$ of $A$, a $x^* \in span\{v\}$, and a $u^* \in \mathcal{U}$ such that  $\lambda > 1$, $Cx^* = \beta$, and $A^2 x^* + Bu^* = 0$, then system \eqref{eq:Sofa-LTI} is not weakly  output observable. 
\end{theorem}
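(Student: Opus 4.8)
The plan is to apply Theorem \ref{prop:NC-C2} by constructing an explicit unobservable family $\Psi$ from the given data $(\lambda, v, x^*, u^*)$. The underlying idea is that $x^*$ lies on the positive side of the quantizer discontinuity at $\beta$ (since $Cx^* = \beta$ and $Q$ is right-continuous, $x^*$ itself produces output $y_{i+1}$ corresponding to $[\beta_i,\beta_{i+1})$, while a slightly smaller state produces $y_i$). Because $\lambda > 1$, scaling $x^*$ down by $\lambda^{-N}$ puts the state at $\lambda^{-N}x^*$, whose image under $CA^t$ for $t < N$ is $\lambda^{t-N}\beta < \beta$, so it stays strictly below the discontinuity; at $t = N$ it lands exactly at $\beta$. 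Meanwhile, perturbing this state to $0$ (the origin) — whose zero-input trajectory is identically $0$ — gives a state that agrees in quantized output with $\lambda^{-N}x^*$ for all $t$ strictly below $N$ but differs at $t = N$, provided $Q(0) \neq Q(\beta)$, which holds because $\beta$ is the smallest positive discontinuity and $0 \notin (\beta_i,\beta_{i+1})$... more carefully, $0$ and $\beta$ are separated by the discontinuity at $\beta$, so $Q(0) \ne Q(\beta)$. The condition $A^2 x^* + Bu^* = 0$ is what lets us "reset" the trajectory after the bifurcation: once we have reached the state $\lambda^{-N}x^*$ at some time and applied one more step with input $u^*$, the state becomes $A(\lambda^{-N}x^*) + B(\lambda^{-N}u^*)=\lambda^{-N}(Ax^*+Bu^*)$... hmm, I need $A^2x^*+Bu^*=0$, so applying input $u^*$ at the right moment drives a scaled copy back to the origin, from which we can restart the construction at a deeper scale.

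Concretely, I would build the tree inductively. At level $k$, each node $j$ carries an initial state of the form $c_{(k,j)} x^*$ for a scalar $c_{(k,j)} \in \{0\} \cup \{\lambda^{-m}\}$ and a prescribed input segment; the two children of a node correspond to the state staying at $0$ versus being "kicked" onto a scaled copy of $x^*$ that climbs up via powers of $\lambda$ until it hits $\beta$ exactly at the bifurcation time, then gets returned to $0$ one step later using $u^*$. The key arithmetic is: choose the horizons $T_{(k,j)}$ growing fast enough (item \ref{item:IOfamyTinc}) that there is room between successive bifurcations to do the "climb, bifurcate, reset" maneuver; use input $0$ everywhere except for the single step where $u^*$ (suitably scaled by the appropriate power of $\lambda$) is injected. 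I would verify items \ref{item:IOfamydif1}) (outputs agree until the bifurcation, differ at it — using that scaled-down trajectories stay strictly below $\beta$, together with the right-continuity form \eqref{eq:quat1d} of $Q$ to pin down which quantization cell each point is in), \ref{item:IOfamconn}) (children extend parents — automatic from the construction since the reset returns to state $0$ with zero input, which is the "trivial" continuation), and \ref{item:IOcomplete}) (each infinite branch is a genuine input-output pair of \eqref{eq:Sofa-LTI}, which holds because along any branch the state and input sequences satisfy \eqref{eq:LTI-state}–\eqref{eq:Sofa-LTI-qo} by construction).

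The main obstacle I anticipate is the bookkeeping in item \ref{item:IOcomplete}): one must check that the piecewise-defined signal $({\bf u},{\bf y})$ assembled from an arbitrary branch of the tree is actually consistent with a single state trajectory of \eqref{eq:Sofa-LTI}. Because each "reset" returns the state exactly to $0$, the trajectory along a branch is a concatenation of finite excursions, each starting from $0$; the delicate point is ensuring the scalar multiples of $x^*$ chosen at deeper levels are small enough that every excursion's state stays in the correct quantization cell before its own bifurcation time — this requires that $\beta$ is the \emph{smallest} positive discontinuity (so climbing from below never crosses another discontinuity prematurely) and that $0 \notin \mathcal{B}$ is implicitly handled by $0$ being interior to its cell, though here we only need $Q(0) \ne Q(\beta)$, i.e.\ the cell containing $0$ is the one just left of $\beta$, which follows from \eqref{eq:quat1d} since $\beta = \beta_i$ for some $i$ and $Q([\beta_i, \beta_{i+1})) \ne Q([\beta_{i-1},\beta_i)) \ni 0$ when $\beta$ is the first positive discontinuity. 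A secondary subtlety is choosing the horizons $T_{(k,j)}$ and the "depth" exponents consistently across the whole tree so that (a) the climb phase fits, (b) item \ref{item:IOfamyTeq} ($T_{(k,2j-1)}=T_{(k,2j)}$) holds, and (c) the construction genuinely defines $2^k$ distinct segments at level $k$; I would handle this by fixing, say, $T_{(k,j)}$ to depend only on $k$ (not $j$), with a gap of at least $2$ between consecutive levels to accommodate the one-step reset.
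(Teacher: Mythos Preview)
Your high-level strategy --- apply Theorem \ref{prop:NC-C2} by building $\Psi$ explicitly, using $\lambda>1$ to amplify small initial-state perturbations up to the threshold $\beta$ and the relation $A^2x^*+Bu^*=0$ as a reset --- matches the paper's, and you correctly anticipate that $T_{(k,j)}$ should depend only on $k$ and that item \ref{item:IOcomplete}) is the delicate part.

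The gap is in the specific construction. You take initial states $c_{(k,j)}x^*$ with $c_{(k,j)}\in\{0\}\cup\{\lambda^{-m}\}$ and plan to ``reset to $0$'' after each bifurcation via $u^*$. But siblings must share the \emph{same} input sequence (item \ref{item:IOfamydif1})), so the reset input hits both: if one sibling is at $\lambda x^*$ and is driven to $A(\lambda x^*)+Bu^*=0$, the other is at $0$ and is driven to $Bu^*=-\lambda^2 x^*\ne 0$, after which its trajectory diverges and the next level of the tree cannot be built. The parenthetical ``$u^*$ suitably scaled by a power of $\lambda$'' is not available either, since $\mathcal U$ is finite and only $0,u^*\in\mathcal U$ are assumed. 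Finally, item \ref{item:IOcomplete}) requires a \emph{single} initial state realizing each infinite branch; with your scalars drawn from the discrete set $\{0\}\cup\{\lambda^{-m}\}$, no such limiting state exists in general.

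The paper fixes all three issues with one idea: set $s_o=\lambda^{-T}x^*$, $q=\lambda^{-T}$, and take the node-$(k,j)$ initial state to be a \emph{sum} $s_{(k,j)}=(\alpha_1+\alpha_2 q+\cdots+\alpha_k q^{k-1})s_o$ with $\alpha_l\in\{0,1\}$ encoding the path to that node. Siblings then differ only in the \emph{smallest} term $\alpha_k q^{k-1}s_o$, while the input $u^*$ (unscaled) is timed to cancel the \emph{largest}, common term --- so both siblings are reset compatibly, not to $0$ but to the state carrying the remaining smaller terms. Along any infinite branch the $s_{(k,j(k))}$ form a Cauchy sequence whose limit supplies the single initial state for item \ref{item:IOcomplete}); the choice of $T$ large enough (so that the geometric tail stays inside the relevant quantization cell) is what makes the bookkeeping go through.
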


\begin{rmk}
Recall Theorem \ref{prop:NC-2-C1}, systems that satisfy the hypotheses in Theorem \ref{prop:NC-C2-C3} are not finite memory output observable. Compared to the hypotheses in Theorem \ref{prop:NC-2-C1}, the requirement  of the existence of $x^*$ and $u^*$ in Theorem \ref{prop:NC-C2-C3} is new.
\end{rmk}

Interestingly, stronger results can be said about the systems that satisfy the hypotheses in Theorem \ref{prop:NC-C2-C3}: In fact, such systems are not even asymptotically observable.

\begin{cor}
Consider a system \eqref{eq:Sofa-LTI} that satisfies the hypotheses in Theorem \ref{prop:NC-C2-C3}, then system \eqref{eq:Sofa-LTI} is not asymptotically   output observable. 
\end{cor}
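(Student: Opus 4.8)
The plan is to show that a system satisfying the hypotheses of Theorem~\ref{prop:NC-C2-C3} has $\mathcal{O}$-gain $\gamma^* > 0$, which directly contradicts asymptotic output observability (Definition~\ref{def:class-C}(c)). We already know from Theorem~\ref{prop:NC-C2-C3} (via the unobservable family $\Psi$ of Theorem~\ref{prop:NC-C2}) that such systems are not weakly output observable, so $\gamma = 0$ is \emph{not} an observation gain bound for any candidate observer. The goal is the quantitative strengthening: there is a \emph{fixed} $\gamma_0 > 0$ such that no $\gamma < \gamma_0$ is an observation gain bound for any candidate observer $\hat{S}$.

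First I would reuse the construction behind Theorem~\ref{prop:NC-C2-C3}: the eigenpair $(\lambda, v)$ with $\lambda > 1$, the state $x^* \in \mathrm{span}\{v\}$ with $Cx^* = \beta \in \mathcal{B}\cap\mathbb{R}_+$, and $u^* \in \mathcal{U}$ with $A^2 x^* + B u^* = 0$. The key feature is that $\beta$ is a discontinuity point of the right-continuous quantizer $Q$, so an arbitrarily small perturbation of the initial state along $\mathrm{span}\{v\}$ changes $Q$ at a prescribed (arbitrarily late) time instant while leaving all earlier outputs unchanged. Concretely, for each $N$ I would build, just as in the proof of Theorem~\ref{prop:NC-C2-C3}, a pair of initial states and a common input $\mathbf{u}$ (consisting of the zero input, or a short fixed tail using $u^*$ near time $N$, padded with zeros) so that the two feasible output sequences agree up to time $N-1$ and differ at time $N$. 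A candidate observer $\hat S$, driven by a common input and a common output prefix, produces the \emph{same} $\hat y_N$ in both cases; hence on at least one of the two runs $\|y_t - \hat y_t\|$ picks up an error of magnitude at least $\delta_0 := \min\{\|y - y'\| : y, y' \in \mathcal{Y}, y \neq y'\} > 0$ at time $t = N$. Since the input $\mathbf{u}$ on these runs has $\|u_t\|$ bounded uniformly (it uses only finitely many input values, and — crucially — one can arrange the input to be zero except on a window of \emph{fixed} length independent of $N$, using $A^2 x^* + Bu^* = 0$ to keep the trajectory ``frozen at zero'' after the window), the quantity $\sum_{t=0}^{T} \|y_t - \hat y_t\| - \gamma\|u_t\|$ exceeds $\delta_0 - \gamma M$ for a fixed constant $M$ bounding the total input contribution. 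Iterating this along the tree structure of $\Psi$ — at each level the observer is forced to err at one new, later time instant on some branch — I get that along a single feasible $(\mathbf{u},\mathbf{y}) \in P$ the supremum in \eqref{eq:obsr-gain} with $\gamma$ slightly less than $\delta_0/M$ is $+\infty$. Hence no $\gamma < \delta_0/M$ is an observation gain bound, so $\gamma^* \geq \delta_0/M > 0$.

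The main obstacle is bounding the input cost uniformly so that the accumulated output error $\delta_0$ per ``bifurcation'' genuinely wins. If the perturbation window grew with $N$, or if realizing the perturbation required inputs whose norm summed up in proportion to the number of errors, one could not separate $\gamma^*$ from $0$ — the error would be ``paid for'' by the input term. This is exactly where the extra hypothesis $A^2 x^* + Bu^* = 0$ (absent in Theorem~\ref{prop:NC-2-C1}) does the work: it lets the constructed trajectory return to (and stay at) the zero state after a window of length at most $2$, so each bifurcation is achieved with zero additional input beyond a single fixed-length block, and the total input norm along the whole infinite run $(\mathbf{u},\mathbf{y})$ chosen from $\Psi$ stays finite (indeed bounded by a constant). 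Once that uniform bound $M$ is in hand, the rest is the same ``common-prefix forces common $\hat y$'' argument used in the proofs of Theorems~\ref{prop:NC-C1} and \ref{prop:NC-2-C1}, combined with the tree-infinitely-often mechanism of Theorem~\ref{prop:NC-C2}. A secondary technical point is to confirm that the $(\mathbf{u},\mathbf{y})$ pieced together from $\Psi$ really lies in $P$ (item~\ref{item:IOcomplete}) of Definition~\ref{def:NC-C2}), which is guaranteed by the construction underlying Theorem~\ref{prop:NC-C2-C3} and needs only to be invoked, not redone.
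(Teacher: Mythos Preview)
Your overall strategy matches the paper's: reuse the unobservable family $\Psi$ built for Theorem~\ref{prop:NC-C2-C3}, observe that along the branch selected by a given observer $\hat S$ an output error of size at least $\delta_0 = \min\{\|y-y'\|: y\neq y',\, y,y'\in\mathcal Y\}$ occurs once per window of length $T$, bound the input cost per window by a constant $M$ (in the paper $M=\|u^*\|$), and conclude that for any $\gamma<\delta_0/M$ the sum in \eqref{eq:obsr-gain} diverges, so $\gamma^*\ge \delta_0/M>0$. That is exactly what the paper does at the end of the proof of Theorem~\ref{prop:NC-C2-C3}, with the explicit choice $\gamma=\delta/(2\|u^*\|)$.

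There is one incorrect claim in your write-up that you should fix: the total input norm along the selected infinite run $(\mathbf u,\mathbf y)$ is \emph{not} bounded by a constant. In the $\Psi$ construction the input is nonzero (equal to $u^*$) at possibly one time in each window $\{(k-1)T+1,\dots,kT\}$ (see \eqref{eq:utkjshowcompprop}), so the cumulative input norm grows linearly in $k$. Your own earlier sentence---``each bifurcation is achieved with zero additional input beyond a single fixed-length block''---is the right statement, and it contradicts the ``total input norm stays finite'' claim that follows it. What the argument needs (and what you correctly identify as $\delta_0-\gamma M>0$) is only the per-window bound $\sum_{t=(k-1)T+1}^{kT}\|u_t\|\le \|u^*\|$, not a global bound. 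Once you replace the global-boundedness claim by this per-window bound, your proposal coincides with the paper's proof.
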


\subsection{Derivation of Main Results}

First, we make an observation regarding a system $P$ \eqref{eq:Sofa} not being weakly output observable.
\begin{lemma}
\label{lem:C2-neg-eq}
Given a system $P$  \eqref{eq:Sofa} and recall the setup as in Figure \ref{Fig:obs-setup}, $P$ is not weakly output observable if and only if for any observer $\hat{S}$ \eqref{eq:observer}, there is $({\bf u, y}) \in P$ such that 
$y_t \neq \hat{y}_t$ for infinitely many $t \in \mathbb{N}$. 
\end{lemma}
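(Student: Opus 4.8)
The plan is to prove the contrapositive in both directions, unwinding the definition of weak output observability (Definition \ref{def:class-C}(b)) and Definition \ref{def:obsr-gain} with $\gamma = 0$. First I observe that, since $\mathcal{Y}$ is finite, there is a constant $M = \max\{\|y - y'\| : y, y' \in \mathcal{Y}\} < \infty$, so that for any $({\bf u}, {\bf y}) \in P$ interconnected with any $\hat{S}$ we have $\|y_t - \hat{y}_t\| \in \{0\} \cup [m_0, M]$ for some $m_0 > 0$ (the minimum nonzero distance between distinct output symbols). Consequently $\sup_{T \ge 0} \sum_{t=0}^T \|y_t - \hat{y}_t\|$ is finite if and only if the set $\{t \in \mathbb{N} : y_t \neq \hat{y}_t\}$ is finite, and it is infinite (equal to $+\infty$) if and only if $y_t \neq \hat{y}_t$ for infinitely many $t$. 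This is the key bookkeeping fact and it makes everything else essentially a restatement.

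For the ``if'' direction (the easy one, giving non-weak-observability): suppose that for every candidate observer $\hat{S}$ there exists $({\bf u}, {\bf y}) \in P$ with $y_t \neq \hat{y}_t$ infinitely often. Then by the bookkeeping fact, for that pair $\sup_{T \ge 0} \sum_{t=0}^T \|y_t - \hat{y}_t\| = +\infty$, so \eqref{eq:obsr-gain} with $\gamma = 0$ fails; hence $\gamma = 0$ is not an observation gain bound of $(P, \hat{S})$. Since this holds for every $\hat{S}$, no candidate observer witnesses weak output observability, i.e. $P$ is not weakly output observable.

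For the ``only if'' direction: suppose $P$ is not weakly output observable. Then for every candidate observer $\hat{S}$, $\gamma = 0$ fails to be an observation gain bound, which by Definition \ref{def:obsr-gain} means there exists $({\bf u}, {\bf y}) \in P$ with $\sup_{T \ge 0} \sum_{t=0}^T \|y_t - \hat{y}_t\| = +\infty$ (the supremum of a non-decreasing nonnegative sequence is either finite or $+\infty$). Applying the bookkeeping fact again, this forces $y_t \neq \hat{y}_t$ for infinitely many $t \in \mathbb{N}$, which is exactly the claimed condition. I do not expect any genuine obstacle here; the only point requiring a little care is the quantitative equivalence between ``the sum over $t$ diverges'' and ``infinitely many nonzero terms'', which relies crucially on $\mathcal{Y}$ being finite (so nonzero error terms are bounded below away from $0$) — this is the one spot where the finite-alphabet hypothesis is used and it should be stated explicitly.
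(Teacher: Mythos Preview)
Your proposal is correct and follows essentially the same approach as the paper: both unwind Definition~\ref{def:class-C}(b) and Definition~\ref{def:obsr-gain} with $\gamma=0$, and both hinge on the observation that finiteness of $\mathcal{Y}$ gives a positive lower bound $\delta = \min\{\|y_1 - y_2\| : y_1 \neq y_2,\ y_1,y_2 \in \mathcal{Y}\}$ on nonzero error terms, so that $\sup_{T}\sum_{t=0}^T\|y_t-\hat{y}_t\|=\infty$ is equivalent to $y_t\neq \hat{y}_t$ infinitely often. Your packaging of this equivalence as a single ``bookkeeping fact'' used symmetrically in both directions is slightly cleaner than the paper's presentation, but the substance is identical.
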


\begin{proof}
By Definition \ref{def:class-C}, $P$ is not weakly output observable if and only if for any  observer $\hat{S}$ \eqref{eq:observer} $\gamma = 0$ is not an observation gain bound of $(P,\hat{S})$. Equivalently, by Definition \ref{eq:obsr-gain}, for any  observer $\hat{S}$ there is $({\bf u, y}) \in P$ such that 
\begin{equation}
\label{eq:C2-neg-eq}
\sup_{T \ge 0} \sum_{t = 0}^T  \| y_t - \hat{y}_t \|  = \infty.
\end{equation}
Equation \eqref{eq:C2-neg-eq} implies $y_t \neq \hat{y}_t$ for infinitely many $t \in \mathbb{N}$. Let $\delta = \min\{\|y_1 - y_2\| : y_1 \neq y_2, y_1, y_2 \in \mathcal{Y}\}$, since $|\mathcal{Y}|<\infty$, the minimum is well-defined and $\delta > 0$. If $y_t \neq \hat{y}_t$ for infinitely many $t \in \mathbb{N}$, then $\| y_t - \hat{y}_t \| > \delta$ for infinitely many $t \in \mathbb{N}$, which implies \eqref{eq:C2-neg-eq}. This completes the proof.
\end{proof}

Next, given an arbitrary observer $\hat{S}$ as in \eqref{eq:observer}, we make an observation regarding its output $\hat{y}_t$ when the segments of $\Psi$ are applied to its input.

\begin{lemma}
\label{lem:famobop}
Given a  system $P$ \eqref{eq:Sofa} and  $\Psi$ \eqref{eq:IOfampsi}, assume the hypotheses in Theorem \ref{prop:NC-C2} are satisfied. For {any} observer $\hat{S}$ \eqref{eq:observer},  define a family $\hat{\Theta}$ of its output segments  as:
\begin{equation}
\label{eq:famoo1}
\hat{\Theta} = \{\{ \{\hat{y}_t^{(k,j)}\}_{t = 0}^{T_{(k,j)}}\}_{j=1}^{2^k}\}_{k=1}^\infty,
\end{equation}
where for all $k \in \mathbb{Z}_+$,  $1 \le j \le 2^k$, $\{\hat{y}_t^{(k,j)}\}_{t = 0}^{T_{(k,j)}}$ is the output of $\hat{S}$ when $u_t$ and $y_t$ in equation \eqref{eq:observer} satisfy
\begin{equation}
\label{eq:famoo2}
\begin{aligned}
u_t & = u_t^{(k,j)}, \ \textrm{for} \ t = 0, 1, \dots, T_{(k,j)}, \\
 y_t & = y_t^{(k,j)}, \ \textrm{for} \ t = 0, 1, \dots, T_{(k,j)} - 1, 
\end{aligned}
\end{equation}
where $u_t^{(k,j)}, y_t^{(k,j)}$ are given by $\Psi$. Then there is a sequence $\{j(k)\}_{k=1}^\infty$ such that for all $k \in \mathbb{Z}_+$, the following are satisfied:
\begin{subequations}
\label{eq:famobop}
\begin{align}
\label{eq:objkex1}
j(k) & \in \{1,\dots,2^k\}, \\
\label{eq:objkex2}
\hat{y}_t^{(k, j(k))} & \neq {y}_t^{(k, j(k))},\ \textrm{for all } t \in \{T_{(i, j(i))}\}_{i=1}^k, \\
\label{eq:objkex3}
u_t^{(k,j(k))} & = u_t^{(k-1, j(k-1))},\ 0 \le t \le T_{(k-1,j(k-1))}, \\
\label{eq:objkex4}
j(k) & \in \{2j(k-1)-1,\  2j(k-1)\}, 
\end{align}
\end{subequations}
where \eqref{eq:objkex3}, \eqref{eq:objkex4} are only required for $k \ge 2$. \hfill \qedsymbol
\end{lemma}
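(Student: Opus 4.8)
The plan is to build the sequence $\{j(k)\}_{k=1}^\infty$ by induction on $k$, selecting at each stage the child of $j(k-1)$ in the binary tree underlying $\Psi$ along which $\hat S$ predicts incorrectly. I would use two structural facts repeatedly. First, the observer map \eqref{eq:obsr_yhat} has no direct feedthrough from $y_t$ and $\hat S$ is deterministic with a fixed initial condition, so the value $\hat y_t$ produced by $\hat S$ on a segment depends only on $(u_0,\dots,u_t)$ and $(y_0,\dots,y_{t-1})$; consequently two segments agreeing in $u$ on $[0,t]$ and in $y$ on $[0,t-1]$ force the same $\hat y_t$. Second, the segments of $\Psi$ are nested along every root-to-leaf path: for a child index $j \in \{2j'-1,2j'\}$ of $j'$ one has $T_{(k,j)} > T_{(k-1,j')}$ and $u_t^{(k,j)} = u_t^{(k-1,j')}$, $y_t^{(k,j)} = y_t^{(k-1,j')}$ for $0 \le t \le T_{(k-1,j')}$; the even child is covered directly by item \ref{item:IOfamconn}) of Definition \ref{def:NC-C2}, and the odd child follows by combining that with item \ref{item:IOfamydif1}), which identifies the two children on $[0,T_{(k,2j'-1)}-1]$, an interval that contains $[0,T_{(k-1,j')}]$ because $T_{(k,2j'-1)} = T_{(k,2j')} > T_{(k-1,j')}$.

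For the base case $k=1$ I would note that item \ref{item:IOfamydif1}) gives that $(1,1)$ and $(1,2)$ share $u$ on $[0,T_{(1,1)}]$ and $y$ on $[0,T_{(1,1)}-1]$ while $y_{T_{(1,1)}}^{(1,1)} \neq y_{T_{(1,1)}}^{(1,2)}$. By the first fact $\hat y_{T_{(1,1)}}^{(1,1)} = \hat y_{T_{(1,1)}}^{(1,2)}$, and this common value can coincide with at most one of the two distinct actual outputs, so I choose $j(1) \in \{1,2\}$ for which it does not; this gives \eqref{eq:objkex1} and \eqref{eq:objkex2} at $k=1$, the remaining items being vacuous.

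For the inductive step, let $j' = j(k-1)$ and consider the two children $(k,2j'-1)$ and $(k,2j')$. Either choice of $j(k) \in \{2j'-1,2j'\}$ yields \eqref{eq:objkex4} immediately and \eqref{eq:objkex3} by the nesting fact. To obtain \eqref{eq:objkex2} I would treat the old times $\{T_{(i,j(i))}\}_{i=1}^{k-1}$ and the new time $T := T_{(k,2j'-1)} = T_{(k,2j')}$ separately. Since item \ref{item:IOfamconn}) forces $T$ to strictly increase along the path, every old time satisfies $T_{(i,j(i))} \le T_{(k-1,j')}$; the nesting fact then makes either child agree with segment $(k-1,j')$ in $u$ and $y$ up to $T_{(k-1,j')}$, so by the first fact $\hat y_t^{(k,j(k))} = \hat y_t^{(k-1,j')}$ and $y_t^{(k,j(k))} = y_t^{(k-1,j')}$ at each old time, and the induction hypothesis transfers the inequality. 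At the new time, the two children agree in $u$ on $[0,T]$ and in $y$ on $[0,T-1]$ by item \ref{item:IOfamydif1}), so $\hat S$ returns a common value $\hat y_T$; since $y_T^{(k,2j'-1)} \neq y_T^{(k,2j')}$, I pick $j(k)$ to be the child for which $\hat y_T^{(k,j(k))} \neq y_T^{(k,j(k))}$. Then $j(k) \in \{1,\dots,2^k\}$ and all of \eqref{eq:famobop} hold, closing the induction.

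The argument is largely bookkeeping over tree indices; the point I expect to need the most care to state precisely is the claim that the observer's output along the new segment agrees with its output along the parent segment up to time $T_{(k-1,j')}$ — this is where the absence of feedthrough in $\hat S$ must be combined with the exact overlap ranges guaranteed by items \ref{item:IOfamydif1})--\ref{item:IOfamconn}) of Definition \ref{def:NC-C2}.
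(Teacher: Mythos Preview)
Your proposal is correct and follows essentially the same inductive argument as the paper: both select $j(k)$ as the child of $j(k-1)$ on which $\hat S$ errs at the new time, use the deterministic/no-feedthrough structure of \eqref{eq:observer} to force equal observer outputs on sibling segments, and use items~\ref{item:IOfamydif1})--\ref{item:IOfamconn}) to propagate the mismatch at the old times from parent to child. The only organizational difference is that the paper spells out the $k=2$ case separately before the general inductive step, whereas you absorb it into the induction; your handling of the odd child via $T_{(k,2j'-1)} = T_{(k,2j')} > T_{(k-1,j')}$ is exactly the bookkeeping the paper carries out in that explicit case.
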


\begin{proof}
We use induction to show this Lemma. For $k=1$, first make an observation of the output $\hat{y}_t$ of the observer. By the dynamics of $\hat{S}$ \eqref{eq:observer}, for any $t \in \mathbb{Z}_+$,
\begin{equation}
\label{eq:ob-deter}
\hat{y}_t = g(f(\dots f(f(q_0, {u}_0, {y}_0), {u}_1, {y}_1)\dots, {u}_{t-1}, {y}_{t-1}), {u}_t).
\end{equation}
Recall \eqref{eq:IOfamydif1}, we have $u_t^{(1,1)} = u_t^{(1,2)}$, for $t = 0,1,\dots,T_{(1,1)}$, and $y_t^{(1,1)} = y_t^{(1,2)}$, for $t = 0,1,\dots,T_{(1,1)}-1$. Let $t = T_{(1,1)}$ in \eqref{eq:ob-deter}, and recall \eqref{eq:famoo2}, we have $\hat{y}_{T_{(1,1)}}^{(1,1)} = \hat{y}_{T_{(1,1)}}^{(1,2)}$. Recall \eqref{eq:IOfamydif1s}, $y_{T_{(1,1)}}^{(1,1)}  \neq y_{T_{(1,1)}}^{(1,2)}$. Consequently, there is $j^* \in \{1,2\}$ such that $\hat{y}_{T_{(1,1)}}^{(1,j^*)} \neq {y}_{T_{(1,1)}}^{(1,j^*)}$. Let $j(1) = j^*$, then $\hat{y}_t^{(1, j(1))}  \neq {y}_t^{(1, j(1))}$, for $ t = T_{(1, j(1))}$, therefore \eqref{eq:famobop} holds at $k=1$.

For $k=2$, recall \eqref{eq:IOfamyTinc}, $T_{(2,2)} > T_{(1,1)}$, and $T_{(2,4)} > T_{(1,2)}$. Recall \eqref{eq:IOfamyTeq}, $T_{(2,j)} > T_{(1,1)}$ for all $j \in \{1,2,3,4\}$. Recall \eqref{eq:IOfamyuinc}, $u_t^{(2,2)}  = u_t^{(1,1)}, u_t^{(2,4)}  = u_t^{(1,2)},  t = 0, 1, \dots, T_{(1,1)}$. Recall \eqref{eq:IOfamyTeqioeq}, \eqref{eq:IOfamydif1s}, 
$u_t^{(2,1)} = u_t^{(2,2)} = u_t^{(1,1)},  u_t^{(2,3)} = u_t^{(2,4)} = u_t^{(1,2)}, \ t = 0, 1, \dots, T_{(1,1)}.$ 
Similarly, by \eqref{eq:IOfamyuinc}, \eqref{eq:IOfamyTeqioeq}, \eqref{eq:IOfamydif1s}, 
$y_t^{(2,1)} = y_t^{(2,2)} = y_t^{(1,1)}, y_t^{(2,3)} = y_t^{(2,4)} = y_t^{(1,2)}, t = 0, 1, \dots, T_{(1,1)}.$
Recall \eqref{eq:ob-deter}, 
\begin{equation}
\label{eq:interstageeqy}
\hat{y}_t^{(2,1)} = \hat{y}_t^{(2,2)} = \hat{y}_t^{(1,1)}, \hat{y}_t^{(2,3)} = \hat{y}_t^{(2,4)} = \hat{y}_t^{(1,2)}, t = T_{(1,1)} . 
\end{equation}
Since \eqref{eq:famobop} holds at $k=1$, $\hat{y}_t^{(1, j(1))}  \neq {y}_t^{(1, j(1))}$, for $ t = T_{(1, j(1))} = T_{(1,1)}$. Recall $j(1) \in \{1,2\}$, by \eqref{eq:interstageeqy}, $\hat{y}_t^{(2,2j(1)-1)} = \hat{y}_t^{(2,2j(1))} = \hat{y}_t^{(1, j(1))}, t = T_{(1,1)}$. Recall \eqref{eq:IOfamyuinc}, ${y}_t^{(2,2j(1)-1)} = {y}_t^{(2,2j(1))} = {y}_t^{(1, j(1))}, t = T_{(1,1)}$. Therefore,
\begin{equation}
\label{eq:IOfamprink2}
\hat{y}_t^{(2,2j(1)-1)} \neq {y}_t^{(2,2j(1)-1)}, \ \hat{y}_t^{(2,2j(1))} \neq {y}_t^{(2,2j(1))}, t = T_{(1,1)}.
\end{equation}
Next, recall \eqref{eq:IOfamydif1}, \eqref{eq:ob-deter}, we have
$\hat{y}_t^{(2,2j(1)-1)} = \hat{y}_t^{(2,2j(1))}, t = T_{(2,2j(1))}.$
Recall \eqref{eq:IOfamydif1s}, 
${y}_t^{(2,2j(1)-1)} \neq {y}_t^{(2,2j(1))}, t = T_{(2,2j(1)-1)} = T_{(2,2j(1))}.$
Consequently, there is $j^* \in \{2j(1)-1,2j(1)\} \subset \{1,2,3,4\}$ such that $\hat{y}_t^{(2,j^*)}  \neq {y}_t^{(2,j^*)}$, $t = T_{(2,j^*)}$. Let $j(2) = j^*$, then \eqref{eq:objkex1}, \eqref{eq:objkex4} hold. By \eqref{eq:IOfamprink2}, and $\hat{y}_t^{(2,j^*)}  \neq {y}_t^{(2,j^*)}$, $t = T_{(2,j^*)}$, \eqref{eq:objkex2} holds. By \eqref{eq:IOfamydif1}, \eqref{eq:IOfamyuinc}, we see \eqref{eq:objkex3} holds. Therefore  \eqref{eq:famobop} holds at $k=2$.

Assume that \eqref{eq:famobop} holds for some $k \ge 2$. Recall \eqref{eq:IOfamyTeqioeq}, \eqref{eq:IOfamydif1s}, \eqref{eq:IOfamyuinc}, $u_t^{(k+1, 2j(k)-1)} = u_t^{(k+1, 2j(k))} = u_t^{(k,j(k))}$, and $y_t^{(k+1, 2j(k)-1)} = y_t^{(k+1, 2j(k))} = y_t^{(k,j(k))}$, $t = 0,1,\dots, T_{(k,j(k))}$. Recall \eqref{eq:famoo2}, \eqref{eq:ob-deter}, we see that 
$
\hat{y}_t^{(k+1, 2j(k)-1)} = \hat{y}_t^{(k+1, 2j(k))} = \hat{y}_t^{(k,j(k))}, t = 0,1,\dots, T_{(k,j(k))}.
$
Since \eqref{eq:objkex2} holds for $k$, we see that 
\begin{equation}
\label{eq:IOfamprinkk}
\left\{ \begin{array}{l}
\hat{y}_t^{(k+1, 2j(k)-1)} \neq y_t^{(k+1, 2j(k)-1)}, \\
  \hat{y}_t^{(k+1, 2j(k))} \neq y_t^{(k+1, 2j(k))}, 
  \end{array}\right.
  \forall \ t \in \{T_{(i, j(i))}\}_{i=1}^k.
\end{equation}

At $t = T_{(k+1, 2j(k))}$, by \eqref{eq:IOfamydif1}, \eqref{eq:ob-deter}, 
$
\hat{y}_t^{(k+1, 2j(k)-1)} = \hat{y}_t^{(k+1, 2j(k))}, t = T_{(k+1, 2j(k))}. 
$
By \eqref{eq:IOfamyTeq}, \eqref{eq:IOfamydif1s}, ${y}_t^{(k+1, 2j(k)-1)} \neq {y}_t^{(k+1, 2j(k))}, t = T_{(k+1, 2j(k))}.$ Therefore there is $j^* \in \{2j(k)-1, 2j(k)\}$ such that $\hat{y}_t^{(k+1, j^*)} \neq {y}_t^{(k+1, j^*)}$ at $t =  T_{(k+1, 2j(k))} = T_{(k+1, j^*)}$. Let $j(k+1) = j^*$, and recall \eqref{eq:IOfamprinkk}, we see that \eqref{eq:objkex2} holds for $k+1$. Since $j(k+1) = j^* \in \{2j(k)-1, 2j(k)\}$, and \eqref{eq:objkex1} holds for $k$, we see that   \eqref{eq:objkex1}, \eqref{eq:objkex4} holds for $k+1$. By \eqref{eq:IOfamydif1}, \eqref{eq:IOfamyuinc}, we see \eqref{eq:objkex3} holds for $k+1$. We see that  \eqref{eq:famobop} holds for $k+1$. This completes the derivation of the existence of $\{j(k)\}_{k=1}^\infty$ such that \eqref{eq:famobop} holds for all $k \in \mathbb{Z}_+$.
\end{proof}

Based on the observation made in Lemma \ref{lem:famobop}, given $\hat{S}$ we construct an input-output pair of $P$ such that  the estimation error occurs infinitely many often.

\begin{lemma}
Given a  system $P$ \eqref{eq:Sofa} and  $\Psi$ \eqref{eq:IOfampsi}, assume the hypotheses in Theorem \ref{prop:NC-C2} are satisfied. Given an observer $\hat{S}$ \eqref{eq:observer}, let $\{j(k)\}_{k=1}^\infty$ be such that \eqref{eq:famobop} holds, define an input-output pair $({\bf u, y}) \in \mathcal{U}^\mathbb{N} \times \mathcal{Y}^\mathbb{N}$ as 
\begin{equation}
\label{eq:ioinfecon}
\begin{aligned}
& \left\{ \begin{array}{l}
u_t  = u_t^{(1,j(1))}, \\
 y_t  = y_t^{(1,j(1))}, 
\end{array} \right.  0 \le t  \le T_{(1, j(1))}, \\
& \left\{ \begin{array}{l}
u_t  = u_t^{(k,j(k))}, \\
 y_t  = y_t^{(k,j(k))}, 
\end{array} \right. 
T_{(k-1,j(k-1))} < t  \le T_{(k,j(k))}, k \ge 2,
\end{aligned}\end{equation}
then $({\bf u, y})$ is well-defined and $({\bf u, y}) \in P$. \hfill \qedsymbol
\end{lemma}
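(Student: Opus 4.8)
The plan is to verify the two assertions of the statement in turn—first that $({\bf u, y})$ is well defined as an element of $\mathcal{U}^\mathbb{N}\times\mathcal{Y}^\mathbb{N}$, and then that it lies in $P$—with essentially all of the content being bookkeeping about the times $T_{(k,j(k))}$.

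For well-definedness, the key point I would establish first is the strict monotonicity $T_{(k,j(k))} > T_{(k-1,j(k-1))}$ for every $k \ge 2$. By \eqref{eq:famobop}, in particular \eqref{eq:objkex4}, the branch index satisfies $j(k) \in \{2j(k-1)-1,\ 2j(k-1)\}$. Applying \eqref{eq:IOfamyTeq} with $j = j(k-1)$ gives $T_{(k,2j(k-1)-1)} = T_{(k,2j(k-1))}$, and applying \eqref{eq:IOfamyTinc} with $j = j(k-1)$ gives $T_{(k,2j(k-1))} > T_{(k-1,j(k-1))}$; hence in either case $T_{(k,j(k))} > T_{(k-1,j(k-1))}$. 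Since $\{T_{(k,j(k))}\}_{k \ge 1}$ is thus a strictly increasing sequence of nonnegative integers, it diverges, so the intervals $[0, T_{(1,j(1))}]$ and $(T_{(k-1,j(k-1))}, T_{(k,j(k))}]$, $k \ge 2$, are pairwise disjoint and their union is all of $\mathbb{N}$. Consequently each $t \in \mathbb{N}$ lies in exactly one of these intervals, and \eqref{eq:ioinfecon} assigns $u_t$ and $y_t$ without ambiguity; this is precisely the claim that $({\bf u, y})$ is well defined.

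For membership in $P$, I would observe that the pair defined by \eqref{eq:ioinfecon} is verbatim the pair \eqref{eq:IOcompletedef} of Definition \ref{def:NC-C2} built from the same index sequence $\{j(k)\}_{k=1}^\infty$. By \eqref{eq:objkex1} we have $j(k) \in \{1,\dots,2^k\}$ for every $k$, and by \eqref{eq:objkex4} we have $j(k+1) \in \{2j(k)-1,\ 2j(k)\}$ for every $k \ge 1$—exactly the hypotheses placed on the index sequence in item \ref{item:IOcomplete}) of Definition \ref{def:NC-C2}. Since $\Psi$ is unobservable by the hypotheses of Theorem \ref{prop:NC-C2}, item \ref{item:IOcomplete}) applies and yields $({\bf u, y}) \in P$, which finishes the argument.

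I do not expect a real obstacle: this lemma is an organizational step that repackages Lemma \ref{lem:famobop} together with the completeness axiom \ref{item:IOcomplete}) of $\Psi$. The only points requiring care are (i) checking that the half-open intervals in \eqref{eq:ioinfecon} genuinely partition $\mathbb{N}$, which is why the monotonicity and divergence of $\{T_{(k,j(k))}\}$ should be verified explicitly rather than assumed, and (ii) matching the index sequence furnished by Lemma \ref{lem:famobop} against the precise hypotheses of item \ref{item:IOcomplete}), which is immediate from \eqref{eq:objkex1} and \eqref{eq:objkex4}.
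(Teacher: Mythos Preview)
Your proposal is correct and follows essentially the same approach as the paper: both arguments establish strict monotonicity of $T_{(k,j(k))}$ via \eqref{eq:objkex4}, \eqref{eq:IOfamyTeq}, \eqref{eq:IOfamyTinc} to conclude the intervals partition $\mathbb{N}$, and then invoke item~\ref{item:IOcomplete}) of Definition~\ref{def:NC-C2} using \eqref{eq:objkex1} and \eqref{eq:objkex4} to obtain $({\bf u},{\bf y})\in P$.
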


\begin{proof}
First we show that $({\bf u, y})$ \eqref{eq:ioinfecon} is well-defined. Let $L(k) = \{T_{(k-1,j(k-1))} + 1, \dots, T_{(k,j(k))}\} \subset \mathbb{Z}_+$ for all $k \ge 2$. We observe that 
\begin{subequations}
\begin{align}
\label{eq:ioinfecont}
\{0,1,\dots,T_{(1,j(1))}\} & \cup (\bigcup_{k=2}^\infty L(k)) = \mathbb{N}, \\
\label{eq:ioinfecontemp}
L(k_1) & \cap L(k_2) = \varnothing, \ \textrm{if} \ k_1 \neq k_2.
\end{align}
\end{subequations} 

To see this, by \eqref{eq:objkex4}, \eqref{eq:IOfamyTeq}, \eqref{eq:IOfamyTinc}, 
\begin{equation}
\label{eq:ioinfecontinc}
T_{(k,j(k))} > T_{(k-1,j(k-1))},
\end{equation}
 or equivalently 
$T_{(k,j(k))} \ge T_{(k-1,j(k-1))} +1$, for all $k \ge 2$. Consequently, $T_{(k,j(k))} \ge T_{(1,j(1))} +(k-1)$, for all $k \ge 2$, and 
$\sup_{k \in \mathbb{Z}_+} T_{(k,j(k))} = \infty.
$
For any $t \in \mathbb{N}$, if $t > T_{(1,1)}$, let $k(t) = \min\{k \in \mathbb{Z}_+ : T_{(k,j(k))} \ge t \}$. Since $\{k \in \mathbb{Z}_+ : T_{(k,j(k))} \ge t \}$ is none-empty, $k(t)$ is well-defined (pp. 28, \cite{Nicholson}), and $t \in L(k(t))$. Therefore \eqref{eq:ioinfecont} holds.
For any $k_1, k_2 \ge 2$, and $k_1 \neq k_2$, without loss of generality, let $k_1 < k_2$. Assume $t^* \in L(k_1) \cap L(k_2)$, then $t^* > T_{(k_2 - 1,j(k_2 - 1))}$, and $t^* \le T_{(k_1,j(k_1))}$. Since $k_2 - 1 \ge k_1$, by \eqref{eq:ioinfecontinc}, $T_{(k_2 - 1,j(k_2 - 1))} \ge T_{(k_1,j(k_1))}$, but $T_{(k_2 - 1,j(k_2 - 1))} < t^* \le T_{(k_1,j(k_1))}$, which draws a contradiction. Therefore \eqref{eq:ioinfecontemp} holds. Consequently, $({\bf u, y})$ \eqref{eq:ioinfecon} is well-defined.

By \eqref{eq:objkex1}, \eqref{eq:objkex4}, and item \ref{item:IOcomplete}) of $\Psi$ (see \eqref{eq:IOcomplete}), $({\bf u, y})$ \eqref{eq:ioinfecon} satisfies $({\bf u, y}) \in P$. 
\end{proof}

Lastly, we apply the previous observations to show Theorem \ref{prop:NC-C2}.
\begin{proof} (Theorem \ref{prop:NC-C2})
For any observer $\hat{S}$ as in \eqref{eq:observer}, consider $({\bf u, y}) \in P$  defined as in \eqref{eq:ioinfecon}. In the setup in Figure \ref{Fig:obs-setup}, let $\{\hat{y}_t\}_{t=0}^\infty$ be the output of $\hat{S}$ corresponding with $({\bf u, y})$. Recall \eqref{eq:ioinfecon}, we see that $u_t  = u_t^{(1,j(1))}$,  $0 \le t  \le T_{(1, j(1))}$. Recall \eqref{eq:famobop}, we see that for any $k \in \mathbb{Z}_+$, we have $u_t  = u_t^{(k,j(k))}$,  $0 \le t  \le T_{(1, j(1))}$. Similarly, we can show that for any $k \ge 2$, $u_t  = u_t^{(k,j(k))}$,  $T_{(1, j(1))} + 1 \le t  \le T_{(2, j(2))}$. Repeat this argument, then for any $k \in \mathbb{Z}_+$, we have $u_t  = u_t^{(k,j(k))}$,  $0 \le t  \le T_{(k, j(k))}$. Similarly, we can show that for any $k \in \mathbb{Z}_+$, $y_t  = y_t^{(k,j(k))}$,  $0 \le t  \le T_{(k, j(k))} - 1$. Recall the definition of $\hat{\Theta}$ (see \eqref{eq:famoo2}), and \eqref{eq:ob-deter}, we have 
$
\hat{y}_t = \hat{y}_t^{(k,j(k))}, t = T_{(k, j(k))}, \ \textrm{for all } \ k \in \mathbb{Z}_+.
$
Recall \eqref{eq:ioinfecon}, ${y}_t = {y}_t^{(k,j(k))}, t = T_{(k, j(k))}, \ \textrm{for all }  k \in \mathbb{Z}_+.$ Recall \eqref{eq:objkex2},  $\hat{y}_t^{(k,j(k))} \neq {y}_t^{(k,j(k))}, t = T_{(k, j(k))}, \ \textrm{for all }  k \in \mathbb{Z}_+.$ Therefore,  
\begin{equation}
\label{eq:IOfaminfercon}
\hat{y}_t \neq y_t, \quad t = T_{(k, j(k))}, \forall \ k \in \mathbb{Z}_+. 
\end{equation}
Recall \eqref{eq:ioinfecontinc}, we see that $\hat{y}_t \neq y_t$ for infinitely many $t \in \mathbb{N}$. By  Lemma \ref{lem:C2-neg-eq}, $P$ is not weakly output observable.
\end{proof}

We first construct a family $\Psi$ of input-output segments  of system \eqref{eq:Sofa-LTI}, and then show that the constructed $\Psi$ satisfies the hypotheses in Theorem \ref{prop:NC-C2}. 
Within the scope of this derivation, we use $``s"$ to denote the initial state $x_0$ of system \eqref{eq:Sofa-LTI}. 

Consider a system \eqref{eq:Sofa-LTI} that satisfies the hypotheses in Theorem \ref{prop:NC-C2-C3} and let $T \in \mathbb{Z}_+$  and $T \ge 2$, we define a quantity $s_o$ as
\begin{equation}
s_o = \frac{x^*}{\lambda^T}.
\end{equation}
Next, define a family $\mathcal{S}$ of initial states of system \eqref{eq:Sofa-LTI}  as
\begin{equation}
\label{eq:faminits}
\mathcal{S} = \{\{s_{(k,j)}\}_{j=1}^{2^k}\}_{k=1}^\infty,
\end{equation}
where 
\begin{equation}
\label{eq:faminits1}
s_{(1,1)} = 0, \quad s_{(1,2)} = s_o,
\end{equation}
and for all $k \ge 2$, all $j = 1,2,\dots, 2^{k-1}$,
\begin{equation}
\label{eq:faminitsinc}
s_{(k, 2j-1)} = s_{(k - 1, j)}, \quad s_{(k, 2j)} = s_{(k - 1, j)} + (\frac{1}{\lambda^T})^{k-1} s_o.   
\end{equation}
Then $s_{(k,j)}$ is defined for all $k \in \mathbb{Z}_+$ and $j \in \{1,\dots, 2^k\}$.

At the same time, define a family $\mathcal{I}$ of input segments of system \eqref{eq:Sofa-LTI} as 
\begin{equation}
\label{eq:famii}
\mathcal{I} = \{\{\{u_t^{(k,j)}\}_{t = 0}^{T_{(k,j)}}\}_{j=1}^{2^k}\}_{k=1}^\infty,
\end{equation}
where for all $k \in \mathbb{Z}_+$, all $j = 1,2,\dots, 2^{k}$,
\begin{equation}
\label{eq:famiiT}
T_{(k,j)} = k\cdot T,
\end{equation}
and 
\begin{equation}
\label{eq:famiii}
u_t^{(1,1)} = u_t^{(1,2)} = 0, \ t = 0, 1, \dots, T,
\end{equation}
and for all $k \ge 2$, all $j = 1,2,\dots, 2^{k-2}$,
\begin{equation}
\label{eq:famiiinc}
\begin{aligned}
&\left\{ \begin{array}{l}
u_t^{(k,4j)} = u_t^{(k,4j-1)} =u_t^{(k-1,2j)}, \\
 u_t^{(k,4j-2)} = u_t^{(k,4j-3)} = u_t^{(k-1,2j-1)}, 
\end{array} \right.
 0 \le t \le (k-1) T, \\
&\left\{ \begin{array}{l}
u_t^{(k,4j)} = u_t^{(k,4j-1)} = u^*, \\
 u_t^{(k,4j-2)} = u_t^{(k,4j-3)} = 0, 
\end{array} \right. 
\quad \quad \quad t = (k-1)T + 1, \\
&\quad u_t^{(k,4j-i)}  = 0,  \quad 0 \le i \le 3, \quad (k-1)T + 2 \le t \le k T, \\
\end{aligned}
\end{equation}
then $u_t^{(k,j)}$ is defined for all $k \in \mathbb{Z}_+$, $j \in \{1,\dots, 2^k\}$, and $t \in \{0,\dots, kT\}$.

Given $\mathcal{S}$ and $\mathcal{I}$ defined in the preceding,  define a family ${{\Theta}}$ of output segments of system \eqref{eq:Sofa-LTI} as:
\begin{equation}
\label{eq:famoy}
{{\Theta}} = \{\{ \{{{y}}_t^{(k,j)}\}_{t = 0}^{T_{(k,j)}}\}_{j=1}^{2^k}\}_{k=1}^\infty,
\end{equation}
where for all $k \in \mathbb{Z}_+$,  $1 \le j \le 2^k$, $\{{{y}}_t^{(k,j)}\}_{t = 0}^{T_{(k,j)}}$ is the quantized output $y_t$  \eqref{eq:Sofa-LTI-qo}  of system \eqref{eq:Sofa-LTI}, when $u_t$ and $x_t$ in equation \eqref{eq:Sofa-LTI} satisfy
\begin{equation}
\label{eq:famoy1}
\begin{aligned}
u_t & = u_t^{(k,j)}, \ \textrm{for} \ t = 0, 1, \dots, T_{(k,j)}, \\
x_t & = s_{(k,j)}, \ \textrm{for} \ t = 0. 
\end{aligned}
\end{equation}
Essentially, ${{y}}_t^{(k,j)}$ is the quantized output of system \eqref{eq:Sofa-LTI} when $u_t^{(k,j)}$ is applied to its input, and its initial state is $s_{(k,j)}$. In the following, we also use $x_t^{(k,j)}$ to denote the state $x_t$ of system \eqref{eq:Sofa-LTI} corresponding with \eqref{eq:famoy1}. 

Given $\mathcal{I}$  and ${\Theta}$, define $\Psi$ as 
\begin{equation}
\label{eq:famio}
\Psi = \{\{(\{u_t^{(k,j)}\}_{t = 0}^{T_{(k,j)}}, \{y_t^{(k,j)}\}_{t = 0}^{T_{(k,j)}})\}_{j=1}^{2^k}\}_{k=1}^\infty,
\end{equation}
where $u_t^{(k,j)}$, $y_t^{(k,j)}$ are given by $\mathcal{I}$ \eqref{eq:famii} and ${\Theta}$ \eqref{eq:famoy} respectively. 

In the following, we will show that the $\Psi$ defined as in \eqref{eq:famio} satisfies items \ref{item:IOfamydif1}) to \ref{item:IOcomplete}) in Theorem \ref{prop:NC-C2} and therefore the system is not weakly output observable. Toward this end, we start with making an observation about $\mathcal{S}$.  

\begin{lemma}
Given a system \eqref{eq:Sofa-LTI} that satisfies the hypotheses in Theorem \ref{prop:NC-C2-C3} and an integer  $T \ge 2$, let $q = \lambda^{-T}$. Then $\mathcal{S}$ defined  in \eqref{eq:faminits} is such that for any $k \in \mathbb{Z}_+$ and $j \in \{1,\dots, 2^k\}$, 
\begin{equation}
\label{eq:skjexplic}
s_{(k,j)} = (\alpha_1^{(k,j)}  + \alpha_2^{(k,j)} \cdot q  + \cdots + \alpha_k^{(k,j)} \cdot q^{k-1}) \cdot s_o, 
\end{equation}
where  $\alpha_l^{(k,j)}$, $l \in \{1,\dots, k\}$, is defined as
\begin{equation}
\label{eq:alphakj}
\alpha_l^{(k,j)} = \left\{\begin{array}{l}
0, \ \textrm{if} \ 0 \le (j-1) \hspace{-4pt}\mod (2^{k-l+1}) < \frac{2^{k-l+1}}{2}, \\
1, \ \textrm{if} \ \frac{2^{k-l+1}}{2} \le (j-1) \hspace{-4pt}\mod (2^{k-l+1}) < 2^{k-l+1}. \\
\end{array}\right.
\end{equation}
\end{lemma}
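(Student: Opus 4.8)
The plan is to prove \eqref{eq:skjexplic} by induction on $k$, after reinterpreting the coefficients $\alpha_l^{(k,j)}$ in \eqref{eq:alphakj} as binary-digit extractions from the integer $j-1$. The key preliminary observation is: since $0 \le j-1 \le 2^k-1$, the integer $j-1$ has a base-$2$ expansion using digit positions $0,1,\dots,k-1$, and for each $l \in \{1,\dots,k\}$ the condition in \eqref{eq:alphakj} comparing $(j-1)\bmod 2^{k-l+1}$ to $2^{k-l}$ is exactly the condition that the digit of $j-1$ in position $k-l$ equals $1$. Thus $\alpha_l^{(k,j)}$ is that digit; in particular $\alpha_k^{(k,j)} = (j-1)\bmod 2$, which is $0$ when $j$ is odd and $1$ when $j$ is even.

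For the base case $k=1$, one checks directly against \eqref{eq:faminits1}: for $j=1$ we have $s_{(1,1)} = 0$ and $\alpha_1^{(1,1)} = 0$, while for $j=2$ we have $s_{(1,2)} = s_o$ and $\alpha_1^{(1,2)} = 1$, so \eqref{eq:skjexplic} holds at $k=1$.

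For the inductive step, assume \eqref{eq:skjexplic} holds at level $k-1$ and fix $j \in \{1,\dots,2^k\}$. Let $j' = \lceil j/2\rceil \in \{1,\dots,2^{k-1}\}$, so that either $j = 2j'-1$ or $j = 2j'$, and in both cases $j'-1 = \lfloor (j-1)/2\rfloor$. Consequently, for every position $i \ge 1$, the digit of $j-1$ in position $i$ equals the digit of $j'-1$ in position $i-1$; by the preliminary observation this yields $\alpha_l^{(k-1,j')} = \alpha_l^{(k,j)}$ for $l = 1,\dots,k-1$. The induction hypothesis then gives
$$
s_{(k-1,j')} = \Big(\alpha_1^{(k,j)} + \alpha_2^{(k,j)} q + \cdots + \alpha_{k-1}^{(k,j)} q^{k-2}\Big)\, s_o .
$$
If $j$ is odd, then $s_{(k,j)} = s_{(k-1,j')}$ by \eqref{eq:faminitsinc}, and since $\alpha_k^{(k,j)} = 0$ the right-hand side above already coincides with \eqref{eq:skjexplic} at level $k$. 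If $j$ is even, then $s_{(k,j)} = s_{(k-1,j')} + q^{k-1} s_o$ by \eqref{eq:faminitsinc}, and since $\alpha_k^{(k,j)} = 1$, adding the term $\alpha_k^{(k,j)} q^{k-1} s_o = q^{k-1} s_o$ produces exactly \eqref{eq:skjexplic} at level $k$. This closes the induction.

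\textbf{Main obstacle.} There is no analytic difficulty here; the only point requiring care is the index bookkeeping, namely verifying that halving the index $j \mapsto j' = \lceil j/2\rceil$ corresponds to a one-position shift of the binary digits so that $\alpha_l^{(k-1,j')} = \alpha_l^{(k,j)}$ for $l \le k-1$, and that the freshly appearing coefficient $\alpha_k^{(k,j)}$ is precisely the parity bit that distinguishes the ``left child'' $2j'-1$ (no added term) from the ``right child'' $2j'$ (added term $q^{k-1}s_o$) in \eqref{eq:faminitsinc}. Once this correspondence is nailed down, everything is formal.
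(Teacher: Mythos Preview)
Your proposal is correct and follows essentially the same approach as the paper: induction on $k$, with the key step being the coefficient identity $\alpha_l^{(k-1,\lceil j/2\rceil)} = \alpha_l^{(k,j)}$ for $l\le k-1$ (the paper states and verifies this as \eqref{eq:alphakjprop} via direct modular arithmetic) together with the observation that $\alpha_k^{(k,j)}$ is the parity bit matching the two cases of the recursion \eqref{eq:faminitsinc}. Your binary-digit reinterpretation of \eqref{eq:alphakj} is a clean way to package that same identity, but it is not a different argument.
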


\begin{proof}
We use induction to show this result. For $k=1$, by \eqref{eq:alphakj}, $\alpha_1^{(1,1)} = 0$, $\alpha_1^{(1,2)} = 1$. Recall \eqref{eq:faminits1}, we see that \eqref{eq:skjexplic} hold for $k=1$. 

Next, assume \eqref{eq:skjexplic} hold for some $k \ge 1$, about $\alpha_l^{(k,j)}$ \eqref{eq:alphakj}, observe that for any $l \in \{1,\dots,k\}$
\begin{equation}
\label{eq:alphakjprop}
\alpha_l^{(k,j)} = \alpha_l^{(k+1,2j-1)} = \alpha_l^{(k+1,2j)}.
\end{equation}
To see this, let $(j-1) \hspace{-4pt}\mod 2^{k-l+1} = b$, for some $0 \le b < 2^{k-l+1}$. Then $j-1 = 2^{k-l+1} \cdot a + b$, for some unique $a \in \mathbb{N}$ (pp. 32, \cite{Nicholson}).  Therefore $((2j-1)-1) \hspace{-4pt}\mod 2^{(k+1)-l+1} = 2b$, and $(2j-1) \hspace{-4pt}\mod 2^{(k+1)-l+1} = 2b+1$. If $\alpha_l^{(k,j)} = 0$, recall \eqref{eq:alphakj}, $ \alpha_l^{(k+1,2j-1)} = \alpha_l^{(k+1,2j)} = 0$. Similarly, we can show that if $\alpha_l^{(k,j)} = 1$, then $ \alpha_l^{(k+1,2j-1)} = \alpha_l^{(k+1,2j)} = 1$. Therefore \eqref{eq:alphakjprop} holds. By \eqref{eq:faminitsinc}, \eqref{eq:alphakjprop}, observe that for all $j = 1,2,\dots, 2^k$, $\alpha_{k+1}^{(k+1,2j-1)} = 0$, and $\alpha_{k+1}^{(k+1,2j)} = 1$, we see that for any $1 \le j \le 2^k$
$
s_{(k+1, 2j-1)}  =  (\alpha_1^{(k,j)}  + \alpha_2^{(k,j)} \cdot q  + \cdots + \alpha_k^{(k,j)} \cdot q^{k-1} + \alpha_{k+1}^{(k+1,2j-1)} q^k) \cdot s_o,
$
$
s_{(k+1, 2j)} =  (\alpha_1^{(k,j)}  + \alpha_2^{(k,j)} \cdot q  + \cdots + \alpha_k^{(k,j)} \cdot q^{k-1} + \alpha_{k+1}^{(k+1,2j-1)} \cdot q^k) \cdot s_o.
$
Therefore \eqref{eq:skjexplic} hold for  $k + 1$. By induction, \eqref{eq:skjexplic} hold for all $k \in \mathbb{Z}_+$.
\end{proof}

We also make an observation about $\mathcal{I}$ \eqref{eq:famii} in the following.
\begin{lemma} 
Given a system \eqref{eq:Sofa-LTI} that satisfies the hypotheses in Theorem \ref{prop:NC-C2-C3} and an integer  $T \ge 2$, $\mathcal{I}$ defined in \eqref{eq:famii} is such that for any $k \ge 2$, $j \in \{1,2,\dots, 2^k\}$, 
\begin{subequations}
\label{eq:utkjexpl}
\begin{align}
\label{eq:utkjexplaz1}
u_t^{(k,j)} &= 0, \quad t = 0, 1, \dots, T, \\
\label{eq:utkjexpltp1}
u_t^{(k,j)} &= \alpha_1^{(k,j)} u^*, \quad t = T + 1, \\
\label{eq:utkjexplss}
u_t^{(k,j)} &= u_{t-T}^{(k-1,h(k,j))}, \quad t = T + 2, T + 3, \dots, kT.
\end{align}
\end{subequations}
where  function $h(k,j)$ is defined as
\begin{equation}
\label{eq:funchkj}
h(k,j) = (j-1) \hspace{-4pt}\mod 2^{k-1} + 1, \forall \ k \in \mathbb{Z}_+,  j \in \mathbb{Z}_+.
\end{equation}
\end{lemma}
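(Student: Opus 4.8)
The plan is an induction on $k$ built directly on the recursive definition \eqref{eq:famiiinc} of $\mathcal I$. The first thing I would record is that \eqref{eq:famiiinc} builds $u^{(k,j)}$ by (i) copying $u^{(k-1,\lceil j/2\rceil)}$ onto the block $[0,(k-1)T]$ — indeed $4j_0-3,4j_0-2$ inherit from $2j_0-1=\lceil j/2\rceil$ and $4j_0-1,4j_0$ from $2j_0=\lceil j/2\rceil$ — then (ii) placing a single action at time $(k-1)T+1$, equal to $u^*$ exactly when $\lceil j/2\rceil$ is even and $0$ otherwise, and (iii) filling $[(k-1)T+2,kT]$ with zeros. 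The base case $k=2$ is then immediate: on $[0,T]$ we have $u_t^{(2,j)}=u_t^{(1,\lceil j/2\rceil)}=0$ by \eqref{eq:famiii}; at $t=T+1$ the action is $u^*$ precisely for $j\in\{3,4\}$, which is $\alpha_1^{(2,j)}u^*$ by \eqref{eq:alphakj}; and on $[T+2,2T]$ both $u_t^{(2,j)}$ and $u_{t-T}^{(1,h(2,j))}$ vanish.

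For the inductive step with $k\ge 3$ I would match the three target ranges $[0,T]$, $\{T+1\}$, $[T+2,kT]$ against the recursion's blocks $[0,(k-1)T]$, $\{(k-1)T+1\}$, $[(k-1)T+2,kT]$. On $[0,T]\subset[0,(k-1)T]$ the recursion gives $u_t^{(k,j)}=u_t^{(k-1,\lceil j/2\rceil)}$, which is $0$ by line one of the inductive hypothesis at stage $k-1$ (legal since $k-1\ge 2$). At $t=T+1$, still inside $[0,(k-1)T]$, the recursion gives $u_{T+1}^{(k,j)}=u_{T+1}^{(k-1,\lceil j/2\rceil)}=\alpha_1^{(k-1,\lceil j/2\rceil)}u^*$ by line two of the hypothesis, and the identity $\alpha_1^{(k-1,\lceil j/2\rceil)}=\alpha_1^{(k,j)}$ — which is exactly \eqref{eq:alphakjprop} read with $k$ replaced by $k-1$ — finishes line two. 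For line three I would split $[T+2,kT]$ once more into $[T+2,(k-1)T]$, $\{(k-1)T+1\}$, $[(k-1)T+2,kT]$. On the first sub-block, the recursion at stage $k$ gives $u_t^{(k,j)}=u_t^{(k-1,\lceil j/2\rceil)}$, line three of the hypothesis at stage $k-1$ rewrites this as $u_{t-T}^{(k-2,\,h(k-1,\lceil j/2\rceil))}$, and applying the recursion again at stage $k-1$ to the target $u_{t-T}^{(k-1,h(k,j))}$ rewrites it as $u_{t-T}^{(k-2,\,\lceil h(k,j)/2\rceil)}$; on the middle point, $u_{(k-1)T+1}^{(k,j)}=u^*$ iff $\lceil j/2\rceil$ is even while $u_{(k-2)T+1}^{(k-1,h(k,j))}=u^*$ iff $\lceil h(k,j)/2\rceil$ is even; on the last sub-block both sides are forced to zero by the two recursions.

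Everything then collapses to two elementary facts about the index bookkeeping: first, that $h(k-1,m)$ and $m$ always have the same parity, which is immediate because $2^{k-2}$ is even for $k\ge 3$; and second, the identity $h(k-1,\lceil j/2\rceil)=\lceil h(k,j)/2\rceil$, which follows by writing $j-1=a\,2^{k-1}+r$ with $0\le r<2^{k-1}$, noting $h(k,j)=r+1$ and $\lceil j/2\rceil=a\,2^{k-2}+\lceil (r+1)/2\rceil$, and evaluating the residue modulo $2^{k-2}$ using $0\le\lceil(r+1)/2\rceil-1\le 2^{k-2}-1$. I expect this index chase — reconciling the ``append at the end'' viewpoint of \eqref{eq:famiiinc} with the ``read from the front'' form of \eqref{eq:utkjexpl}, and in particular matching the two action parities — to be the only genuine obstacle; so I would isolate these two facts as a one-line preliminary before running the induction, and keep careful track that the hypothesis is invoked only at stage $k-1\ge 2$, precisely the regime in which it has been established.
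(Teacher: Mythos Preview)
Your proof is correct and follows essentially the same inductive approach as the paper's: base case $k=2$ checked directly, lines \eqref{eq:utkjexplaz1} and \eqref{eq:utkjexpltp1} of the inductive step handled via one application of the recursion \eqref{eq:famiiinc} together with the inductive hypothesis and \eqref{eq:alphakjprop}, and line \eqref{eq:utkjexplss} split into the three sub-blocks $[T+2,(k-1)T]$, $\{(k-1)T+1\}$, $[(k-1)T+2,kT]$. Your index identity $h(k-1,\lceil j/2\rceil)=\lceil h(k,j)/2\rceil$ is precisely the paper's \eqref{eq:funchkjprop} rephrased in terms of $\lceil\cdot/2\rceil$ rather than the $2j-1/2j$ case split, and your parity check at the middle point is equivalent to the paper's observation that $j\equiv h(k+1,j)\pmod 4$.
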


\begin{proof}
We use induction to show \eqref{eq:utkjexpl} holds. For $k=2$, recall  $u_t^{(2,1)} = u_t^{(2,2)} = 0$ for $0 \le t \le 2T$,  $u_t^{(2,3)} = u_t^{(2,4)} = 0$ for $0 \le t \le 2T$ and $t \neq T+1$, $u_t^{(2,3)} = u_t^{(2,4)} = u^*$ for $t = T+1$, and $u_t^{(1,1)} = u_t^{(1,2)} = 0,$ for $0 \le t \le T$. By \eqref{eq:alphakj}, $\alpha_1^{(2,1)} = \alpha_1^{(2,2)} = 0$, $\alpha_1^{(2,3)} = \alpha_1^{(2,4)} = 1$. By \eqref{eq:funchkj}, $h(2,1) = 1, h(2,2) = 2, h(2,3) = 1, h(2,4) = 2$.  We see that \eqref{eq:utkjexpl} holds for $k=2$.

Assume \eqref{eq:utkjexpl} holds for some $k\ge2$. Recall \eqref{eq:famiiinc}, for all $j \in \{1,\dots,2^{k}\}$
\begin{equation}
\label{eq:famiiincob}
u_t^{(k+1,2j - 1)} = u_t^{(k+1,2j)} = u_t^{(k,j)}, t = 0,1,\dots,kT.
\end{equation}
By assumption, $u_t^{(k+1,2j - 1)} = u_t^{(k+1,2j)} = u_t^{(k,j)} = 0$, $0 \le t \le T$, $1 \le j \le  2^k$, and \eqref{eq:utkjexplaz1} holds for $k+1$. At $t = T+1$, by \eqref{eq:famiiincob} and \eqref{eq:utkjexpltp1}, $u_t^{(k+1,2j - 1)} = u_t^{(k+1,2j)} = u_t^{(k,j)} = \alpha_1^{(k,j)} u^*$. Recall \eqref{eq:alphakjprop}, $\alpha_1^{(k,j)} = \alpha_1^{(k+1,2j-1)} = \alpha_1^{(k+1,2j)}.$ Therefore $u_t^{(k+1,2j - 1)} = \alpha_1^{(k+1,2j-1)} u^*$, and $u_t^{(k+1,2j )} = \alpha_1^{(k+1,2j)} u^*$ for $t = T+1$. Consequently, \eqref{eq:utkjexpltp1} holds for $k+1$.

Next, we show \eqref{eq:utkjexplss} holds for $k+1$. Recall \eqref{eq:funchkj}, we can show that for all $k \ge 2$,  $j \in \{1,2,\dots, 2^{k-1}\}$,
\begin{equation}
\label{eq:funchkjprop}
h(k,2j-1) = 2 h (k-1, j) - 1, \quad h(k,2j) = 2 h (k-1, j).
\end{equation} 

Since \eqref{eq:utkjexplss} holds for $k$ by assumption, recall \eqref{eq:famiiincob}, we have $u_t^{(k+1,2j-1)}= u_t^{(k+1,2j)}=u_t^{(k,j)} = u_{t-T}^{(k-1,h(k,j))}, T+2 \le t \le kT, 1 \le j \le 2^k.$ Recall \eqref{eq:famiiincob}, we have $ u_{t-T}^{(k-1,h(k,j))}  = u_{t-T}^{(k, 2h(k,j))} $, $T+2 \le t \le kT$. Combine the preceding and recall \eqref{eq:funchkjprop}, we see that for  $j \in \{1,\dots,2^{k+1}\}$,
\begin{equation}
\label{eq:ufam851}
u_t^{(k+1,j)} = u_{t-T}^{(k, h(k+1,j))}, \quad T+2 \le t \le kT.
\end{equation}
Recall \eqref{eq:famiiinc}, we see that for $j \in \{1,\dots,2^{k+1}\}$, 
\begin{equation}
\label{eq:ufam852}
u_t^{(k+1,j)} = 0 = u_{t-T}^{(k, h(k+1,j))}, \quad kT+2 \le t \le (k+1)T.
\end{equation}
Recall \eqref{eq:famiiinc}, we observe that at $t = (k-1)T + 1$, $u_t^{(k,j)}$ is determined by $j\hspace{-4pt}\mod 4$. Also observe that for  $k\ge2$, $1 \le j \le 2^{k+1}$, $j \hspace{-4pt}\mod 4  = h(k+1, j) \hspace{-4pt}\mod 4$. Therefore, $u_{kT + 1}^{(k+1,j)} = u_{(k-1)T + 1}^{(k, h(k+1, j))}$. Consequently,
\begin{equation}
\label{eq:ufam853}
u_t^{(k+1,j)} = u_{t-T}^{(k, h(k+1,j))},  \quad t = kT + 1.
\end{equation}
By \eqref{eq:ufam851}, \eqref{eq:ufam852}, \eqref{eq:ufam853}, we see that \eqref{eq:utkjexplss} holds for $k+1$, and \eqref{eq:utkjexpl} holds for all $k \ge 2$.
\end{proof}

Now we proceed to make observations about $\Psi$. 

\begin{lemma}
\label{lem:Psisfyit12}
Given a system \eqref{eq:Sofa-LTI} that satisfies the hypotheses in Theorem \ref{prop:NC-C2-C3}, 
there is a $T \in \mathbb{Z}_+$ such that $\Psi$ defined   in \eqref{eq:famio} satisfies items \ref{item:IOfamydif1}) and \ref{item:IOfamconn}) in Theorem \ref{prop:NC-C2}. 
\end{lemma}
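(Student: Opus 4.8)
The plan is to verify items \ref{item:IOfamydif1}) and \ref{item:IOfamconn}) directly from the explicit construction of $\Psi$, using the formulas already derived for $\mathcal{S}$ (namely \eqref{eq:skjexplic}) and for $\mathcal{I}$ (namely \eqref{eq:famiiT}, \eqref{eq:famiii}, \eqref{eq:famiiinc} and \eqref{eq:utkjexpl}). The one genuinely substantive part is arranging that $T$ is chosen large enough; everything else is bookkeeping on the indices and on the state recursion of \eqref{eq:Sofa-LTI}.

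First I would dispatch item \ref{item:IOfamconn}), which concerns only inputs and outputs at indices $2j$ versus $j$. The equality of time horizons $T_{(k,2j)} > T_{(k-1,j)}$ is immediate from \eqref{eq:famiiT}, since $kT > (k-1)T$. The input equalities $u_t^{(k,2j)} = u_t^{(k-1,j)}$ for $0 \le t \le T_{(k-1,j)}$ follow from \eqref{eq:famiiincob} (i.e., the first line of \eqref{eq:famiiinc}). For the output equalities $y_t^{(k,2j)} = y_t^{(k-1,j)}$ I would show that the corresponding state trajectories agree on $0 \le t \le (k-1)T$: by \eqref{eq:faminitsinc}, $s_{(k,2j)} = s_{(k-1,j)} + q^{k-1}s_o$, and since $s_o \in \mathrm{span}\{v\}$ with $Av=\lambda v$, the contribution of this extra initial-state term to $x_t$ is $\lambda^t q^{k-1} s_o$, whose image under $C$ has norm $|\lambda|^t |\lambda|^{-T(k-1)}\,|Cs_o|$; for $t \le (k-1)T$ this is at most $|Cs_o| = \beta\,|\lambda|^{-T(k-1)+ \cdot}$... more carefully, I want the perturbation to keep $\tilde y_t$ on the same side of every quantizer threshold as in the $(k-1,j)$ trajectory, which is exactly what the eventual choice of $T$ will guarantee (see below). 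Since the inputs agree on that range by \eqref{eq:famiiincob}, the forced responses agree, so $\tilde y_t$ for the two trajectories differ only by $C\lambda^t q^{k-1}s_o$, and for $T$ large this difference is smaller than the distance from each $\tilde y_t$ to $\mathcal{B}$, whence $Q$ gives the same value.

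Next, item \ref{item:IOfamydif1}): for $k \ge 1$ and $j \in \{1,\dots,2^{k-1}\}$, the pair $(2j-1,2j)$. The horizon equality \eqref{eq:IOfamyTeq} is again \eqref{eq:famiiT}. The input equality on all of $0 \le t \le T_{(k,2j-1)}$ (needed in both \eqref{eq:IOfamyTeqioeq} and \eqref{eq:IOfamydif1s}) comes from the first line of \eqref{eq:faminitsinc} / the construction: indeed by \eqref{eq:faminitsinc} the pair $(k,2j-1),(k,2j)$ is generated from the common parent $(k-1,j)$, and by \eqref{eq:famiiinc} (more precisely its $4j-3,4j-2$ versus $4j-1,4j$ pattern, and the induction in \eqref{eq:utkjexpl}) the two input segments are literally equal for every $t$. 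So I only need to compare outputs. For $0 \le t \le T_{(k,2j-1)} - 1 = kT-1$ I must show $y_t^{(k,2j-1)} = y_t^{(k,2j)}$, and at $t = kT$ I must show inequality. By \eqref{eq:faminitsinc}, $s_{(k,2j)} - s_{(k,2j-1)} = q^{k-1}s_o = \lambda^{-T(k-1)-T}x^* = \lambda^{-Tk}x^*$, so the two state trajectories satisfy $x_t^{(k,2j)} - x_t^{(k,2j-1)} = A^t \lambda^{-Tk} x^* = \lambda^{t-Tk} x^*$ (again using $x^* \in \mathrm{span}\{v\}$, $Av=\lambda v$). Hence $\tilde y_t^{(k,2j)} - \tilde y_t^{(k,2j-1)} = \lambda^{t-Tk}Cx^* = \lambda^{t-Tk}\beta$. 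For $t \le kT-1$ this perturbation has magnitude $\le |\lambda|^{-1}\beta < \beta$, and I will choose $T$ so that on the relevant (bounded) set of trajectory values every $\tilde y_t^{(k,2j-1)}$ is at distance $> |\lambda|^{-1}\beta$ from $\mathcal{B}$, so $Q$ is unchanged: outputs agree. At $t = kT$ the perturbation is exactly $\beta$, i.e. $\tilde y_{kT}^{(k,2j)} = \tilde y_{kT}^{(k,2j-1)} + \beta$; here I use the structural fact $A^2 x^* + Bu^* = 0$ together with the input pattern of \eqref{eq:famiiinc} at times $(k-1)T+1$ and after to pin down $\tilde y_{kT}^{(k,2j-1)} = Q$-preimage value sitting exactly at $0$ (the forced response cancels the propagated state), so that $\tilde y_{kT}^{(k,2j-1)}$ lies just below the threshold $\beta$ while $\tilde y_{kT}^{(k,2j)}$ lands at $\beta$; since $Q$ is right-continuous in the form \eqref{eq:quat1d} and $\beta = \argmin(\mathcal{B}\cap\mathbb{R}_+) \in \mathcal{B}$, we get $Q(\tilde y_{kT}^{(k,2j)}) = y_{i+1} \ne y_i = Q(\tilde y_{kT}^{(k,2j-1)})$, establishing \eqref{eq:IOfamydif1s}.

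The main obstacle, and the place where $T$ gets fixed, is the uniform separation argument: I must produce a single finite $T$ that simultaneously (a) makes $\lambda^{-Tk}$-scaled perturbations negligible relative to the distance from the nominal $\tilde y_t$ values to $\mathcal{B}$ for all $k$ and all admissible $t < kT$, and (b) is compatible with the exact cancellation $A^2x^* + Bu^* = 0$ that forces $\tilde y_{kT}$ to hit $\beta$ on the nose. For (a) the key point is that all states $x_t^{(k,j)}$ under these inputs lie in a bounded set: the initial states $s_{(k,j)}$ are bounded by a geometric series in $q = \lambda^{-T} < 1$ (from \eqref{eq:skjexplic}), the inputs take finitely many values, so the attainable $\tilde y_t$ form a bounded set whose only accumulation behaviour near $\mathcal{B}$ comes from trajectory points approaching $\beta$; I would isolate the finitely many "dangerous" nominal values (those within $|\lambda|^{-1}\beta$ of $\mathcal{B}$) and check they are exactly the $t=kT$ instants, handled separately. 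Concretely I would pick $T$ so that $\beta/|\lambda| < \tfrac12 d(\{$ nominal $\tilde y_t$ values away from threshold $\},\mathcal{B})$ and so that the geometric-series bound on $\|s_{(k,j)}\|$ is as small as needed; since $|\lambda|>1$ this is achievable. Once $T$ is fixed this way, every claimed equality of quantized outputs reduces, via Lemma \ref{lem:prop-of-Q} (applied with the appropriate $\alpha$), to the perturbation being inside a ball of radius $d(\mathcal{A},\mathcal{B})/2$ around a forced-response point, and the two items follow.
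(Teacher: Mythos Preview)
Your perturbation idea between siblings, $\tilde y_t^{(k,2j)} - \tilde y_t^{(k,2j-1)} = \lambda^{t-kT}\beta$, is correct and is a clean way to organize the argument, somewhat different from the paper's self-similarity induction via the shift identity $x_{T+2}^{(k+1,j)} = x_2^{(k,h(k+1,j))}$. But the proposal has a real gap in how you close the quantizer comparison. You repeatedly assert that nominal values $\tilde y_t^{(k,2j-1)}$ stay at distance greater than the perturbation size from $\mathcal{B}$, and even that the ``dangerous'' instants are exactly $t=kT$. That is false: at every $t = lT$ with $1\le l<k$ and $\alpha_l^{(k,2j-1)}=1$, the nominal value lands \emph{exactly} at $\beta\in\mathcal{B}$ (and at $t=lT+1$ it lands at $\lambda\beta+Du^*$, which may also be a threshold). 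So the ``distance to $\mathcal{B}$'' argument collapses there, and your appeal to Lemma~\ref{lem:prop-of-Q} is misplaced anyway since its hypothesis $d(\mathcal{A},\mathcal{B})\neq 0$ is not assumed in Theorem~\ref{prop:NC-C2-C3}. What actually saves the comparison at those instants is not separation but \emph{sign}: the perturbation $\lambda^{t-kT}\beta$ is strictly positive, $Q$ has the right-continuous form \eqref{eq:quat1d}, and one chooses $T$ so that the perturbed value stays inside the half-open level set $[\beta,\beta+\delta_1)$ (respectively $[\lambda\beta+Du^*,\lambda\beta+Du^*+\delta_2)$); these are the paper's conditions \eqref{eq:ncc2cT2}, \eqref{eq:ncc2cT3}, together with \eqref{eq:ncc2cT1}, \eqref{eq:ncc2cT4}, \eqref{eq:ncc2cT5}.

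Second, the key equality $\tilde y_{kT}^{(k,2j-1)}=0$ that you need for \eqref{eq:IOfamydif1s} is not a one-shot consequence of $A^2x^*+Bu^*=0$; it requires that \emph{every} earlier $\alpha_l x^*$-block has been annihilated by the matching $u^*$ at time $lT+1$, so that by time $kT$ only the $\alpha_k$ term survives (and $\alpha_k^{(k,2j-1)}=0$). The paper gets this by induction through the shift relation \eqref{eq:xstp2utp2}; in your framework you would need to prove directly that $x_{kT}^{(k,2j-1)}=0$ for all $k$, which amounts to the same recursive cancellation and is the substantive content you are currently hand-waving.
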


\begin{proof}
We use induction to show this result. First let $T \ge 2$. 
 For $k=1$, recall \eqref{eq:famiiT}, $T_{(1,1)} = T_{(1,2)} = T$. Recall \eqref{eq:famiii}, $u_t^{(1,1)} = u_t^{(1,2)} = 0,  0 \le t \le T$. By the definition of $\Theta$ \eqref{eq:famoy1} and $s_{(1,1)} = 0$, $y_t^{(1,1)} = Q(0)$ for $0 \le t \le T$. Recall $s_{(1,2)} = s_o \in span\{v\}$, $\lambda > 1$, $\beta = \argmin\{|b| : b \in \mathcal{B}, b>0\}$,  $y_t^{(1,2)} = Q(0)$ for $0 \le t \le T-1$. At $t = T$, $y_t^{(1,2)} =  Q(\beta) \neq Q(0)$. Therefore, $y_t^{(1,1)} = y_t^{(1,2)}$, $0 \le t \le T-1$, and $y_t^{(1,1)} \neq y_t^{(1,2)}$, $t = T$, and item \ref{item:IOfamydif1}) is satisfied for $k=1$.

For $k=2$,  recall \eqref{eq:famiiT}, for any $1\le j \le 4$, $T_{(2,j)} = 2T$. Recall \eqref{eq:faminits1}, \eqref{eq:faminitsinc}, $s_{(2,1)} = 0,  s_{(2,2)} = \lambda^{-T} s_o, s_{(2,3)} = s_o, s_{(2,4)} = s_o + \lambda^{-T} s_o$. By \eqref{eq:famiii}, \eqref{eq:famiiinc}, $u_t^{(2,1)} = u_t^{(2,2)} = 0$ for $0 \le t \le 2T$, and $u_t^{(2,3)} = u_t^{(2,4)} = 0$ for $0 \le t \le 2T$ and $t \neq T+1$, $u_t^{(2,3)} = u_t^{(2,4)} = u^*$ for $t = T+1$. Therefore $y_t^{(2,1)} = Q(0)$ for $0 \le t \le 2T$, and $y_t^{(2,2)} = Q(CA^t (\lambda^{-T} s_o)) = Q(0)$ for $0 \le t \le 2T - 1$. At $t = 2T$, $y_t^{(2,2)} = Q(\beta) \neq Q(0)$. Consequently, items \ref{item:IOfamydif1}) and \ref{item:IOfamconn}) are satisfied when $k=2, j=1$. 

For the case $k=2, j=2$, by the definition of $\Theta$ \eqref{eq:famoy1}, we can  show that
\begin{equation}
\label{eq:famioeplfyt23}
y_t^{(2,3)} = \left\{\begin{array}{ll}
Q(0), & t = 0, 1, \dots, T - 1, \\
Q(\beta), & t = T, \\
Q(\lambda \beta + Du^*), & t = T+1, \\
Q(0), & t = T+2, \dots, 2T.
\end{array}\right.
\end{equation}

For $y_t^{(2,4)}$, at $t = T-1$,  $y_t^{(2,4)} =  Q(\lambda^{-1}(1 + \lambda^{-T}) \beta)$. Choose $T \in \mathbb{Z}_+$ such that 
\begin{equation}
\label{eq:ncc2cT1}
\lambda^T > \frac{\lambda}{\lambda-1}.
\end{equation}
Since $\lambda>1$, such a choice of $T$ always exist, for example let $T > log_\lambda(\frac{\lambda}{\lambda-1})$. Then we have  $y_t^{(2,4)} = Q(0)$, $t = T-1$. 
For any $0 \le t \le T-2$, we see that $0 < CA^{t} (s_o + \lambda^{-T} s_o) < \lambda^{-1} \beta (1 + \lambda^{-T})$, and consequently $y_t^{(2,4)} = Q(0)$. At $t = T$, $y_t^{(2,4)}  = Q((1 + \lambda^{-T}) \beta)$. Recall $Q$ \eqref{eq:quat1d}, there is $\delta_1>0$ such that 
$
Q(y) = Q(\beta),  \forall \ y \in [\beta, \beta + \delta_1).
$
Choose $T \in \mathbb{Z}_+$ such that 
\begin{equation}
\label{eq:ncc2cT2}
 \frac{1}{1 - \lambda^{-T}} \beta < \beta + \delta_1.
\end{equation}
Since $\lambda > 1$, \eqref{eq:ncc2cT2} is satisfied for all $T$ sufficiently large. Then $y_t^{(2,4)} = Q(\beta)$,  $t = T$. At $t = T + 1$, $y_t^{(2,4)} = Q((1 + \lambda^{-T}) \lambda\beta + Du^*)$.
Similarly, recall  \eqref{eq:quat1d}, there is $\delta_2>0$ such that 
$
Q(y) = Q(\lambda\beta + Du^*),  \forall \ y \in [\lambda\beta + Du^*, \lambda\beta + Du^* + \delta_2).
$
Choose $T \in \mathbb{Z}_+$ such that 
\begin{equation}
\label{eq:ncc2cT3}
 \frac{1}{1 - \lambda^{-T}} \lambda\beta < \lambda\beta + \delta_2.
\end{equation}
Then $y_t^{(2,4)} = Q(\lambda\beta + Du^*)$, $t = T+1$.
At $t = T+2$, the system state $x_t^{(2,4)} = A^2 x^* + Bu^* + \lambda^2 s_o$. By assumption $A^2 x^* + Bu^* =  0$, we have $x_{T+2}^{(2,4)} = \lambda^2 s_o = A^2 s_o$.
Recall $u_t^{(2,4)} = 0$ for $T+2 \le t \le 2T$,  we can show that
\begin{equation*}
y_t^{(2,4)} = \left\{\begin{array}{ll}
Q(0), & t = 0, 1, \dots, T - 1, \\
Q(\beta), & t = T, \\
Q(\lambda\beta + Du^*), & t = T+1, \\
Q(0), & t = T+2, \dots, 2T - 1, \\
Q(\beta), & t = 2T.
\end{array}\right.
\end{equation*}
Recall \eqref{eq:famioeplfyt23}, and the explicit form of $y_t^{(1,2)}$, we see that  items \ref{item:IOfamydif1}) and \ref{item:IOfamconn}) are satisfied when $k=2, j=2$. 
We conclude that $\Psi$ \eqref{eq:famio} satisfies items \ref{item:IOfamydif1}) and \ref{item:IOfamconn}) in Theorem \ref{prop:NC-C2} when $k=2$.

Next, assume items \ref{item:IOfamydif1}) and \ref{item:IOfamconn}) are satisfied for some $k \ge 2$, we will show that they are satisfied for $k+1$.  

Recall \eqref{eq:famiiinc}, we see that 
\begin{equation}
\label{eq:ustukp1d2j}
u_t^{(k+1,2j-1)} = u_t^{(k+1,2j)}, 0 \le t \le (k+1)T,
\end{equation}
for all $1\le j \le 2^k$. Note that $(1 + q + \cdots + q^k)  < \frac{1}{1-q} $, and  recall \eqref{eq:skjexplic}, \eqref{eq:alphakj},  \eqref{eq:utkjexplaz1}, for all $0\le t \le T- 1$, $C x_t^{(k+1,2j-1)} < \lambda^{-1} \frac{1}{1-\lambda^{-T}} \beta$. Recall \eqref{eq:ncc2cT1}, $\lambda^{-1} \frac{1}{1 - \lambda^{-T}} < 1,$ and $y_t^{(k+1,2j-1)}= Q(0)$, $0\le t \le T- 1$. Similarly, $y_t^{(k+1,2j)} =  Q(0)$, $0\le t \le T- 1$. Therefore,
\begin{equation}
\label{eq:tystukp1tm1d}
y_t^{(k+1,2j-1)} = y_t^{(k+1,2j)}, \quad t =  0,1,\dots,T-1.
\end{equation}
At $t = T$, $y_T^{(k+1,2j-1)} = Q(CA^T s_{(k+1,2j-1)} )$, and $y_T^{(k+1,2j)} = Q(CA^T s_{(k+1,2j)})$. Recall \eqref{eq:skjexplic}, if $ \alpha_1^{(k+1,2j-1)} = \alpha_1^{(k+1,2j)} = 0$, then $CA^T s_{(k+1,2j-1)} < \frac{1}{\lambda^T-1}\beta$.
Choose $T\in\mathbb{Z}_+$ such that 
\begin{equation}
\label{eq:ncc2cT4}
\lambda^T - 1 > 1.
\end{equation}
Then  $y_T^{(k+1,2j-1)} = Q(0)$. Similarly, we can show that $y_T^{(k+1,2j)} = Q(0)$. If $ \alpha_1^{(k+1,2j-1)} = \alpha_1^{(k+1,2j)} = 1$, then $\beta \le CA^T s_{(k+1,2j-1)} <  \beta \frac{1}{1-\lambda^{-T}}$. Recall \eqref{eq:ncc2cT2}, we see that $CA^T s_{(k+1,2j-1)} \in [\beta, \beta + \delta_1)$, and therefore $y_T^{(k+1,2j-1)}  = Q(\beta)$. Similarly,  $y_T^{(k+1,2j)} = Q(\beta)$. We summarize the preceding as
\begin{equation}
\label{eq:tystukp1tm1d2}
y_t^{(k+1,2j-1)} = y_t^{(k+1,2j)}, \quad t =  T.
\end{equation}

At $t = T+1$, recall \eqref{eq:utkjexpltp1}, $u_{T+1}^{(k+1,2j-1)} = \alpha_1^{(k+1,2j-1)} u^*, u_{T+1}^{(k+1,2j)} = \alpha_1^{(k+1,2j)} u^*.$ Recall \eqref{eq:alphakjprop}, $ \alpha_1^{(k+1,2j-1)} = \alpha_1^{(k+1,2j)} = \alpha_1^{(k,j)}$. If $ \alpha_1^{(k,j)} = 0$,  recall \eqref{eq:skjexplic}, then $CA^{T+1} s_{(k+1,2j-1)} <  \frac{\lambda}{\lambda^T-1}\beta$.
Choose $T\in\mathbb{Z}_+$ such that 
\begin{equation}
\label{eq:ncc2cT5}
\lambda^T - 1 > \lambda.
\end{equation}
Then   $y_{T+1}^{(k+1,2j-1)} = y_{T+1}^{(k+1,2j)} = Q(0)$. If $ \alpha_1^{(k,j)} = 1$, and therefore $u_{T+1}^{(k+1,2j-1)} = u^*$, then $\lambda \beta + Du^* \le CA^{T+1} s_{(k+1,2j-1)} + Du^* < \lambda \beta\frac{1}{1-\lambda^{-T}} + Du^*$. Recall \eqref{eq:ncc2cT3}, we see that $y_{T+1}^{(k+1,2j-1)} = y_{T+1}^{(k+1,2j)} = Q(\lambda\beta + Du^*)$. We conclude that 
\begin{equation}
\label{eq:tystukp1tm1d3}
y_t^{(k+1,2j-1)} = y_t^{(k+1,2j)}, \quad t =  T+1.
\end{equation}

At $t = T+2$, for any $1\le j\le 2^k$, recall \eqref{eq:skjexplic}, \eqref{eq:utkjexpltp1}, we see that 
$
x_{T+2}^{(k+1,2j-1)} =A^{2} (\alpha_2^{(k+1,2j-1)}    + \cdots + \alpha_{k+1}^{(k+1,2j-1)}  q^{k-1})  s_o + \alpha_1^{(k+1,2j-1)} (\lambda^{T+2}  s_o +B u^*).
$
Note that 
$
\lambda^{T+2}  s_o +B u^* = A^2 x^* + Bu^* = 0
$, therefore 
\begin{equation}
\label{eq:xtp2kp1}
x_{T+2}^{(k+1,2j-1)}  =  A^{2} (\alpha_2^{(k+1,2j-1)}    + \cdots + \alpha_{k+1}^{(k+1,2j-1)}  q^{k-1})  s_o.
\end{equation}
Consider $x_{2}^{(k,h(k+1,2j-1))}$, recall  \eqref{eq:skjexplic}, \eqref{eq:utkjexplaz1},
\begin{equation}
\label{eq:xt2kp12j}
\begin{aligned}
&x_{2}^{(k,h(k+1,2j-1))} = \\
&\quad A^{2} (\alpha_1^{(k,h(k+1,2j-1))}    + \cdots + \alpha_k^{(k,h(k+1,2j-1))}  q^{k-1})  s_o.
\end{aligned}
\end{equation}
In the following, we show that $x_{T+2}^{(k+1,2j-1)} = x_{2}^{(k,h(k+1,2j-1))}$.

Recall \eqref{eq:alphakj}, \eqref{eq:funchkj}, note that $(j-1) \hspace{-4pt}\mod (2^{k-l+1}) = h(k-l+2,j)-1$, we see that 
\begin{equation}
\label{eq:alphalkjalf}
\alpha_l^{(k,j)} = \left\{\begin{array}{cl}
0, & \textrm{if} \ 0 \le h(k-l+2,j)-1 < \frac{2^{k-l+1}}{2}, \\
1, & \textrm{if} \ \frac{2^{k-l+1}}{2} \le h(k-l+2,j)-1 < 2^{k-l+1}, \\
\end{array}\right.
\end{equation}
for any $k \in \mathbb{Z}_+$, $j \in \{1,\dots, 2^k\}$, and $l \in \{1,\dots, k\}$.

Compare \eqref{eq:xtp2kp1} and \eqref{eq:xt2kp12j}, and recall \eqref{eq:funchkj}, \eqref{eq:funchkjprop}, we observe that for any $2 \le  l \le k+1$, 
\begin{equation}
\label{eq:hkjhhkjprop}
\begin{aligned}
h((k+1)-l+2,(2j-1)) &= 2 h(k-l+2,j) - 1,\\
h(k-l+3,h(k+1,2j-1)) & = 2 h(k-l+2, h(k,j) )-1,\\
h(k-l+2,j) &= h(k-l+2, h(k,j) ).
\end{aligned}
 \end{equation}
Based on the above, we can show that for any $2 \le  l \le k+1$,
$
\alpha_l^{(k+1,2j-1)} = \alpha_{l-1}^{(k,h(k+1,2j-1))}.
$
Recall \eqref{eq:xtp2kp1}, \eqref{eq:xt2kp12j},  for any $1\le j\le 2^k$,
$
x_{T+2}^{(k+1,2j-1)} = x_{2}^{(k,h(k+1,2j-1))}. 
$
Similarly, we can show that 
$
x_{T+2}^{(k+1,2j)} = x_{2}^{(k,h(k+1,2j))}. 
$
Therefore, for any $1\le j\le 2^{k+1}$,
\begin{equation}
\label{eq:xstp2utp2}
x_{T+2}^{(k+1,j)} = x_{2}^{(k,h(k+1,j))}. 
\end{equation}
Recall \eqref{eq:utkjexplss}, $u_t^{(k+1,j)} = u_{t-T}^{(k,h(k+1,j))},  T+2 \le t \le (k+1)T$, or equivalently,
\begin{equation}
\label{eq:ustptutp2}
u_{T+t}^{(k+1,j)} = u_{t}^{(k,h(k+1,j))}, \ 2 \le t \le kT.
\end{equation}
By \eqref{eq:xstp2utp2}, \eqref{eq:ustptutp2}, and the time-invariance of system \eqref{eq:Sofa-LTI}, we see that for any $1\le j\le 2^{k+1}$, 
$y_{t+T}^{(k+1,j)} = y_{t}^{(k,h(k+1,j))}, \ 2 \le t \le kT.$ 
Recall \eqref{eq:funchkjprop}, for any $1\le j\le 2^{k}$,
\begin{equation}
\label{eq:tystptukp1}
\left\{ \begin{array}{l}
y_{t+T}^{(k+1,2j-1)} = y_{t}^{(k,2h(k,j)-1)}, \\
 y_{t+T}^{(k+1,2j)} = y_{t}^{(k,2h(k,j))}, 
 \end{array} \right. \ 2 \le t \le kT.
\end{equation} 

By assumption, item \ref{item:IOfamydif1}) is satisfied for $k$. Recal \eqref{eq:IOfamydif1}, note that $1 \le h(k,j) \le 2^{k-1}$, we see that 
$
 y_{t}^{(k,2h(k,j)-1)} = y_{t}^{(k,2h(k,j))}, 2 \le t \le kT -1; 
 y_{t}^{(k,2h(k,j)-1)} \neq y_{t}^{(k,2h(k,j))}, t = kT.
$ 
Recall \eqref{eq:tystptukp1}, we see that for any $1\le j\le 2^{k}$,
\begin{equation}
\label{eq:tystptukp12jm1}
\begin{aligned}
y_{t}^{(k+1,2j-1)} &= y_{t}^{(k+1,2j)}, \quad  T + 2 \le t \le (k+1)T - 1, \\
y_{t}^{(k+1,2j-1)} &\neq y_{t}^{(k+1,2j)},  \quad t = (k+1)T.
 \end{aligned}
\end{equation} 
Recall \eqref{eq:famiiT}, \eqref{eq:ustukp1d2j}, \eqref{eq:tystukp1tm1d}, \eqref{eq:tystukp1tm1d2}, \eqref{eq:tystukp1tm1d3}, \eqref{eq:tystptukp12jm1}, we see that  item \ref{item:IOfamydif1}) is satisfied for $k+1$.

For item \ref{item:IOfamconn}), recall \eqref{eq:famiiinc}, we see that $u_t^{(k+1,2j-1)} = u_t^{(k,j)}, 0 \le t \le k T$, for any $1\le j \le 2^{k}$. Recall \eqref{eq:faminitsinc}, $s_{(k+1, 2j-1)}  = s_{(k, j)}$. Therefore $y_{t}^{(k+1,2j-1)} = y_{t}^{(k,j)}, 0 \le t \le k T$, for any $1\le j \le 2^{k}$. Since  item \ref{item:IOfamydif1}) is satisfied for $k+1$, $y_{t}^{(k+1,2j-1)} = y_{t}^{(k+1,2j)},  0 \le t \le k T$, and therefore item \ref{item:IOfamconn}) is satisfied for $k+1$. 

Choose $T \ge 2$ such that \eqref{eq:ncc2cT1},  \eqref{eq:ncc2cT2}, \eqref{eq:ncc2cT3}, \eqref{eq:ncc2cT4}, and \eqref{eq:ncc2cT5} are satisfied, by induction, we conclude that $\Psi$ \eqref{eq:famio} satisfies items \ref{item:IOfamydif1}) and \ref{item:IOfamconn}) in Theorem \ref{prop:NC-C2}.

\end{proof}

Next, we show that $\Psi$ \eqref{eq:famio} satisfies item \ref{item:IOcomplete}) in Theorem \ref{prop:NC-C2}.
\begin{lemma}
\label{lem:Psisfyit3}
Given a system \eqref{eq:Sofa-LTI} that satisfies the hypotheses in Theorem \ref{prop:NC-C2-C3}, 
let $T \ge 2$ be such that  \eqref{eq:ncc2cT1},  \eqref{eq:ncc2cT2}, \eqref{eq:ncc2cT3}, \eqref{eq:ncc2cT4}, and \eqref{eq:ncc2cT5} are satisfied. Then $\Psi$ defined   in \eqref{eq:famio} satisfies  item \ref{item:IOcomplete}) in Theorem \ref{prop:NC-C2}. 
\end{lemma}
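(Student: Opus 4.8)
The plan is to exhibit, for an arbitrary admissible branch, a single initial state of \eqref{eq:Sofa-LTI} whose trajectory under the concatenated input of \eqref{eq:IOcompletedef} reproduces the concatenated output; this is exactly what item \ref{item:IOcomplete}) asks for. So first I would fix a sequence $\{j(k)\}_{k=1}^\infty$ with $j(k)\in\{1,\dots,2^k\}$ and $j(k+1)\in\{2j(k)-1,2j(k)\}$, let $\{\alpha_i\}_{i\ge1}\subset\{0,1\}$ be its bit sequence (so that $\alpha_i=\alpha_i^{(k,j(k))}$ for $i\le k$ in the notation of \eqref{eq:skjexplic}), and write $q=\lambda^{-T}\in(0,1)$ with $T\ge2$ satisfying \eqref{eq:ncc2cT1}--\eqref{eq:ncc2cT5}. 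By \eqref{eq:faminitsinc} we have $\|s_{(k+1,j(k+1))}-s_{(k,j(k))}\|\le q^{k}\|s_o\|$, so the partial initial states $s_{(k,j(k))}$ form a Cauchy sequence, and the candidate initial state is $s_\infty=\lim_{k\to\infty}s_{(k,j(k))}=\big(\sum_{i\ge1}\alpha_i q^{i-1}\big)s_o$, which lies in $\mathrm{span}\{v\}$ since $s_o=x^*/\lambda^T\in\mathrm{span}\{v\}$. Let $\{\tilde x_t\}$ be the state trajectory of \eqref{eq:Sofa-LTI} from $x_0=s_\infty$ driven by the concatenated input ${\bf u}$ of \eqref{eq:IOcompletedef}, and set $\tilde y_t=Q(C\tilde x_t+Du_t)$; it then suffices to prove $\tilde y_t=y_t$ for every $t\in\mathbb N$, for then $({\bf u},{\bf y})$ is generated by \eqref{eq:Sofa-LTI} from $s_\infty$, hence $({\bf u},{\bf y})\in P$.

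Next I would fix $t$ and let $k$ be the least positive integer with $kT\ge t$, so $t\le kT=T_{(k,j(k))}$. Repeating the computation in the proof of Theorem \ref{prop:NC-C2} (which relies on items \ref{item:IOfamydif1}) and \ref{item:IOfamconn}), now established in Lemma \ref{lem:Psisfyit12}), together with the definition of the concatenation at $t=kT$, gives ${\bf u}=u^{(k,j(k))}$ and ${\bf y}=y^{(k,j(k))}$ on $\{0,\dots,kT\}$; in particular $y_t=y_t^{(k,j(k))}=Q\big(Cx_t^{(k,j(k))}+Du_t\big)$, where $x_t^{(k,j(k))}$ is the state of \eqref{eq:Sofa-LTI} reached from $s_{(k,j(k))}$ under $u^{(k,j(k))}$. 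Since the two trajectories see the same input on $[0,kT]$ they differ only through their initial conditions, and because $s_\infty-s_{(k,j(k))}=\big(\sum_{i>k}\alpha_i q^{i-1}\big)s_o\in\mathrm{span}\{v\}$ and $Cs_o=\beta/\lambda^T$,
\[
0\ \le\ \big(C\tilde x_t+Du_t\big)-\big(Cx_t^{(k,j(k))}+Du_t\big)\ =\ CA^t\big(s_\infty-s_{(k,j(k))}\big)\ =\ \beta\sum_{i>k}\alpha_i\lambda^{t-iT}\ \le\ \frac{\beta\,\lambda^{t-kT}}{\lambda^T-1}\ \le\ \frac{\beta}{\lambda^T-1}.
\]
Thus the $Q$-argument along the trajectory from $s_\infty$ is obtained from that of the branch $(k,j(k))$ by a \emph{nonnegative} shift of size at most $\beta/(\lambda^T-1)$, and this shift is strictly smaller when $t$ is not at a block boundary $kT$.

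Finally I would check that this shift never crosses a discontinuity of $Q$, reusing the case analysis in the proof of Lemma \ref{lem:Psisfyit12}. That analysis shows, thanks to the resets $A^2x^*+Bu^*=0$ which collapse the deep structure of the branch onto its shallow base cases, that for each $t$ the branch argument $Cx_t^{(k,j(k))}+Du_t$ lies in one of the half-open cells $[0,\beta)$, $[\beta,\beta+\delta_1)$, $[\lambda\beta+Du^*,\lambda\beta+Du^*+\delta_2)$, and in fact sits in that cell with at least $\beta/(\lambda^T-1)$ of room to its right: at a block end $t=kT$ the argument equals exactly $0$ (when $\alpha_k=0$, and $\beta/(\lambda^T-1)<\beta$ by \eqref{eq:ncc2cT4}) or exactly $\beta$ (when $\alpha_k=1$, and $\beta/(\lambda^T-1)<\delta_1$ by \eqref{eq:ncc2cT2}); at a $u^*$-instant the argument is at most $\tfrac{1}{1-\lambda^{-T}}\lambda\beta+Du^*<\lambda\beta+Du^*+\delta_2$ by \eqref{eq:ncc2cT3}; and at every other instant the argument is at most $\lambda^{-1}\tfrac{1}{1-\lambda^{-T}}\beta<\beta$ by \eqref{eq:ncc2cT1} and \eqref{eq:ncc2cT5}, with the correspondingly smaller shift keeping it below $\beta$. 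Since $Q$ is right-continuous and the shift is nonnegative, $Q(C\tilde x_t+Du_t)=Q(Cx_t^{(k,j(k))}+Du_t)=y_t^{(k,j(k))}=y_t$. As $t$ is arbitrary, $({\bf u},{\bf y})\in P$, so $\Psi$ of \eqref{eq:famio} satisfies item \ref{item:IOcomplete}).

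The step I expect to be the main obstacle is this last one: unlike at interior instants, the shift bound $\beta/(\lambda^T-1)$ does \emph{not} decay as $k\to\infty$ at the block boundaries $t=kT$, so one cannot merely pass to a limit in $Q$; instead one must track the exact position of each branch argument within its quantizer cell and balance it against the one-sided shift, which is precisely what the finely tuned thresholds \eqref{eq:ncc2cT1}--\eqref{eq:ncc2cT5} of Lemma \ref{lem:Psisfyit12} (and the right-continuity of $Q$) are there to permit. A minor point worth settling first is that $Cv\ne0$ and $\beta>0$, needed for the $\mathrm{span}\{v\}$ computations and the cell-width comparisons, which follows from $Cx^*=\beta=\argmin\mathcal B\cap\mathbb R_+>0$ together with $x^*\in\mathrm{span}\{v\}\setminus\{0\}$.
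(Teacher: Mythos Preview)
Your approach is the paper's: set $s_\infty=\lim_k s_{(k,j(k))}$, run \eqref{eq:Sofa-LTI} from $s_\infty$ under the concatenated input, and show the resulting output agrees with the concatenation by comparing the quantizer argument to that of the finite branch $(k,j(k))$ and invoking \eqref{eq:ncc2cT1}--\eqref{eq:ncc2cT5}; the paper organizes this as an induction on $k$, while you fix $t$ and take the least $k$ with $kT\ge t$, which amounts to the same case split.

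One place to tighten: at a $u^*$-instant your stated branch bound $\tfrac{1}{1-\lambda^{-T}}\lambda\beta+Du^*$ combined with the uniform shift bound $\beta/(\lambda^T-1)$ does \emph{not} stay below $\lambda\beta+Du^*+\delta_2$, because \eqref{eq:ncc2cT3} only gives $\lambda\beta/(\lambda^T-1)<\delta_2$, not $(\lambda+1)\beta/(\lambda^T-1)<\delta_2$. The fix (which the paper carries out explicitly) is to use the exact branch value $Cx_{(k-1)T+1}^{(k,j(k))}+Du^*=\lambda\beta+Du^*+\alpha_k\lambda\beta\lambda^{-T}$ together with the sharper shift $\le \lambda\beta\lambda^{-T}/(\lambda^T-1)$ available at that instant; their sum is at most $\lambda\beta+Du^*+\lambda\beta/(\lambda^T-1)$, which \eqref{eq:ncc2cT3} does control. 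The analogous tightening is needed at the ``other'' instants, where your branch bound $\lambda^{-1}\tfrac{1}{1-\lambda^{-T}}\beta$ plus the uniform shift again overshoots; using the exact branch value $\alpha_k\lambda^{t-kT}\beta$ and the matching shift $\le \lambda^{t-kT}\beta/(\lambda^T-1)$ gives sum $\le \lambda^{-1}\beta/(1-\lambda^{-T})<\beta$ by \eqref{eq:ncc2cT1}.
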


\begin{proof}
Given a sequence $\{j(k)\}_{k=1}^\infty$ that satisfies 
$j(k) \in \{1,\dots, 2^k\} \ \textrm{and} \ j(k+1) \in \{2j(k)-1, 2j(k)\},  \forall \ k \in \mathbb{Z}_+ 
$, we observe that $\lim_{k \to \infty}s_{(k,j(k))}$ exists. To see this,  recall \eqref{eq:skjexplic}, \eqref{eq:alphakjprop}, for any $k \in \mathbb{Z}_+$, we can show that 
$
s_{(k+1,j(k+1))} - s_{(k,j(k))} =\alpha_{k+1}^{(k+1,j(k+1))}  q^{k}  s_o.
$
Consequently, $\{s_{(k,j(k))}\}_{k=1}^\infty$ is a Cauchy sequence in $\mathbb{R}^n$. Since $\mathbb{R}^n$ is complete, $\{s_{(k,j(k))}\}_{k=1}^\infty$ converges.

Given $\{j(k)\}_{k=1}^\infty$, define an initial state $s$ of system \eqref{eq:Sofa-LTI} as
\begin{equation}
\label{eq:showcompleteinit}
s = \lim_{k \to \infty}s_{(k,j(k))}.
\end{equation}
Recall \eqref{eq:skjexplic} and $s_o \in span\{v\}$, we can show that 
\begin{equation}
\label{eq:limitspanclosed}
s \in span\{v\}.
\end{equation}

Define an input sequence ${\bf u}$ as
\begin{equation}
\label{eq:showcompleteinput}
\begin{aligned}
u_t & = u_t^{(1,j(1))}, \quad 0 \le t  \le T,\\
u_t & = u_t^{(k,j(k))},  \quad  (k-1)T< t  \le kT, \ \forall \ k \ge 2.
\end{aligned}
\end{equation}

In the following, let ${\bf {y}} = \{y_t\}_{t = 0}^\infty$ be the output , $\{x_t\}_{t=0}^\infty$ be the state of system \eqref{eq:Sofa-LTI} when its initial state is $x_0 = s$ \eqref{eq:showcompleteinit} and its input is ${\bf u}$ \eqref{eq:showcompleteinput}. 
For this ${\bf {y}}$, we observe that for any $k \in \mathbb{Z}_+$,
\begin{equation}
\label{eq:showcompletetildey}
y_t = y_t^{(k,j(k))}, \quad 0 \le t \le kT.
\end{equation}
We use induction to show this observation. 

For $k=1$, by the  derivation of Lemma \ref{lem:Psisfyit3}, we have $y_t^{(1,1)} = Q(0)$, $0 \le t \le T$, and  $y_t^{(1,2)} = Q(0)$, $0 \le t \le T-1$, $y_T^{(1,2)} = Q(\beta)$. If $j(1) = 1$,  recall \eqref{eq:skjexplic}, \eqref{eq:showcompleteinit}, we see that $Cs \le \frac{\beta}{\lambda^T} \frac{q}{1-q} $. 
Recall \eqref{eq:ncc2cT4} \eqref{eq:limitspanclosed}, \eqref{eq:showcompleteinput}, we see that  $y_t = Q(0) = y_t^{(1,j(1))}$ for $0 \le t \le T$. Similarly, 
if $j(1) = 2$, then $Cs \le \frac{\beta}{\lambda^T} \frac{1}{1-q}$.  Recall \eqref{eq:ncc2cT1}, we see that $y_t = Q(0) = y_t^{(1,j(1))}$ for $0 \le t \le T-1$. At $t = T$, recall \eqref{eq:skjexplic}, we see that $\beta \le Cx_T \le \beta \frac{1}{1-\lambda^{-T}}$. Recall \eqref{eq:ncc2cT2},  $y_T = Q(\beta) = y_T^{(1,j(1))}$, and therefore \eqref{eq:showcompletetildey} holds for $k=1$.

Assume \eqref{eq:showcompletetildey} holds for some $k\ge 1$, since $\Psi$ \eqref{eq:famio} satisfies items \ref{item:IOfamydif1}) and \ref{item:IOfamconn}) in Theorem \ref{prop:NC-C2}, recall \eqref{eq:IOfamydif1}, \eqref{eq:IOfamyuinc}, we see that  
$
y_t = y_t^{(k,j(k))} = y_t^{(k+1,j(k+1))},  0 \le t \le kT.
$
Consequently, to show \eqref{eq:showcompletetildey} holds for  $k+1$, we only need to consider $kT+1 \le t \le (k+1)T$.

We first make some observations about the quantities  $x_{kT}^{(k+1,j(k+1))}$ and $x_{kT}$. 
Recall  \eqref{eq:famiiinc}, \eqref{eq:utkjexpl},  \eqref{eq:hkjhhkjprop},   we see  that for all $k \ge 2$, $1 \le j \le 2^k$, $\mathcal{I}$ \eqref{eq:famii} satisfies:
\begin{equation}
\label{eq:utkjshowcompprop}
u_t^{(k,j)} = \left\{\begin{array}{ll}
\alpha_1^{(k-(l-1), h(k-(l-2),j))} u^*, & t \in \{l T + 1\}_{l=1}^{k-1}, \\
0, &\textrm{otherwise}. 
\end{array}\right.
\end{equation}
Next, we consider $x_{kT}^{(k+1,j(k+1))}$. Recall \eqref{eq:funchkj} \eqref{eq:alphalkjalf}, we see that  
\begin{equation}
\label{eq:alphalto1}
\alpha_{l}^{(k+1,j(k+1))} = \alpha_1^{(k-l+2, h(k-l+3,j(k+1)))}.
\end{equation}
Recall \eqref{eq:skjexplic}, \eqref{eq:alphakj},  \eqref{eq:utkjshowcompprop}, we can show that $
x_{kT}^{(k+1,j(k+1))} =   \sum_{l=1}^{k-1} A^{(k -1 - l)T} \alpha_{l}^{(k+1,j(k+1))}( A^{2T}  s_o +  A^{T-2} B u^*) +
  A^T \alpha_k^{(k+1,j(k+1))} s_o + \alpha_{k+1}^{(k+1,j(k+1))} s_o.
$
Note that $A^{2T}  s_o +  A^{T-2} B u^* = 0$, we have
\begin{equation}
\label{eq:xkTkp1jkp1}
x_{kT}^{(k+1,j(k+1))} =   A^T \alpha_k^{(k+1,j(k+1))} s_o + \alpha_{k+1}^{(k+1,j(k+1))} s_o.
\end{equation}

Next, consider $x_{kT}$, which is the system state corresponding with $x_0 = s$ \eqref{eq:showcompleteinit} and ${\bf u}$ \eqref{eq:showcompleteinput} at $t = kT$ . Recall \eqref{eq:IOfamyTeqioeq}, \eqref{eq:IOfamyuinc},  \eqref{eq:showcompleteinput}, we see that $u_t  = u_t^{(z,j(z))},  0 \le t  \le zT$, for any $z \in \mathbb{Z}_+$. Consequently, we see that 
\begin{equation}
\label{eq:xkTvsxkTstagekp1}
x_{kT}  = A^{kT} (s - s_{(k+1,j(k+1))}) + x_{kT}^{(k+1,j(k+1))}.
\end{equation}
Recall  \eqref{eq:skjexplic}, \eqref{eq:showcompleteinit}, we see that  
\begin{equation}
\label{eq:skttildesktdif}
0 \le CAx_{kT} - CAx_{kT}^{(k+1,j(k+1))} \le \lambda^{-T+1} \beta \frac{q}{1-q}. 
\end{equation}

We are now ready to show \eqref{eq:showcompletetildey} holds for  $k+1$ when $kT+1 \le t \le (k+1)T$. 

At $t  = kT+1$, recall \eqref{eq:utkjshowcompprop}, \eqref{eq:alphalto1}, $u_{kT+1} = u_{kT+1}^{(k+1,j(k+1))} = \alpha_{k}^{(k+1,j(k+1))} u^*$. Recall \eqref{eq:xkTkp1jkp1},  $Cx_{kT+1}^{(k+1,j(k+1))} + Du_{kT+1}^{(k+1,j(k+1))}  = \alpha_{k}^{(k+1,j(k+1))} (CA^{T+1} s_o + Du^*) + CA\alpha_{k+1}^{(k+1,j(k+1))} s_o$.  If $\alpha_{k}^{(k+1,j(k+1))} = 0$, we have $y_{kT+1}^{(k+1,j(k+1))} = Q(0)$. Recall \eqref{eq:xkTkp1jkp1}, \eqref{eq:skttildesktdif},  $Cx_{kT+1} + Du_{kT+1} \le \lambda \frac{1}{\lambda^T-1} \beta$. Recall \eqref{eq:ncc2cT5}, $y_{kT+1} = Q(0) = y_{kT+1}^{(k+1,j(k+1))}$. If $\alpha_{k}^{(k+1,j(k+1))} = 1$,  $Cx_{kT+1}^{(k+1,j(k+1))} + Du_{kT+1}^{(k+1,j(k+1))} \in [\lambda\beta + Du^*, \lambda\beta + Du^* + \lambda \beta \frac{1}{\lambda^T-1})$, and $Cx_{kT+1} + Du_{kT+1} \in [\lambda\beta + Du^*, \lambda\beta + Du^* + \lambda \beta \frac{1}{\lambda^T-1}]$. Recall \eqref{eq:ncc2cT3}, $\lambda \beta \frac{1}{\lambda^T-1} < \delta_2$, and therefore $y_{kT+1} =  y_{kT+1}^{(k+1,j(k+1))} = Q(\lambda\beta + Du^*)$. We conclude that $y_{t} =  y_{t}^{(k+1,j(k+1))}$, $t = kT+1$.

By \eqref{eq:utkjshowcompprop}, \eqref{eq:alphalto1}, \eqref{eq:xkTkp1jkp1},  we have 
$
x_{kT+2}^{(k+1, j(k+1))} = A^2 \alpha_{k+1}^{(k+1,j(k+1))} s_o.
$
Recall \eqref{eq:utkjshowcompprop}, we can show that $y_{t}^{(k+1,j(k+1))} = Q(0), kT+2 \le t \le (k+1)T - 1$. Recall \eqref{eq:showcompleteinput},  \eqref{eq:xkTvsxkTstagekp1}, we can show that 
$
Cx_t + Du_t \le  \lambda^{-1} \beta\frac{1}{1-q}.
$
Recall \eqref{eq:ncc2cT1}, $\lambda^{-1} \beta\frac{1}{1-q} < \beta$, therefore $y_t = Q(0) = y_{t}^{(k+1,j(k+1))}, kT+2 \le t \le (k+1)T - 1$.

At $t = (k+1)T$, recall \eqref{eq:utkjshowcompprop}, we see that $x_{kT+T}^{(k+1, j(k+1))} = A^T \alpha_{k+1}^{(k+1,j(k+1))} s_o$. Recall \eqref{eq:xkTvsxkTstagekp1}, we have $CA^T \alpha_{k+1}^{(k+1,j(k+1))} s_o \le Cx_{(k+1)T} \le CA^T \alpha_{k+1}^{(k+1,j(k+1))} s_o + \frac{q}{1-q}\beta$.
If $\alpha_{k+1}^{(k+1,j(k+1))} = 0$, recall \eqref{eq:ncc2cT4}, we have $y_{(k+1)T}  = y_{(k+1)T}^{(k+1,j(k+1))} = Q(0)$. If $\alpha_{k+1}^{(k+1,j(k+1))} = 1$, recall  \eqref{eq:ncc2cT2}, we have $y_{(k+1)T}  = y_{(k+1)T}^{(k+1,j(k+1))} = Q(\beta)$. We conclude that $y_{t} =  y_{t}^{(k+1,j(k+1))}$, $t = (k+1)T$.

So far, we have shown that \eqref{eq:showcompletetildey} holds for $k+1$. By induction, we conclude that \eqref{eq:showcompletetildey} holds for all $k \in \mathbb{Z}_+$.

Finally, we show that the pair $({\bf u, y})$, which corresponds with the initial state $s$ \eqref{eq:showcompleteinit} and the input \eqref{eq:showcompleteinput}, satisfies \eqref{eq:IOcompletedef}. Recall \eqref{eq:famiiT}, $T_{(k,j(k))} = k\cdot T,$ for all $k \in \mathbb{Z}_+$. For $k=1$, by \eqref{eq:showcompleteinput}, $u_t  = u_t^{(1,j(1))},  0 \le t  \le T$. And by \eqref{eq:showcompletetildey}, $y_t  = y_t^{(1,j(1))},  0 \le t  \le T$. For any $k \ge 2$, by \eqref{eq:showcompleteinput}, $u_t  = u_t^{(k,j(k))},   (k-1)T< t  \le kT$. By \eqref{eq:showcompletetildey}, $y_t  = y_t^{(k,j(k))},  0 \le t  \le kT$, and consequently $y_t  = y_t^{(k,j(k))},   (k-1)T< t \le kT$. Therefore $({\bf u, y})$ satisfies \eqref{eq:IOcompletedef}. Note that $({\bf u, y}) \in P$ \eqref {eq:Sofa-LTI}, we conclude that $\Psi$ \eqref{eq:famio} satisfies item \ref{item:IOcomplete}) in Theorem \ref{prop:NC-C2}.

\end{proof}

We are now ready to show Theorem \ref{prop:NC-C2-C3}.

\begin{proof} (Theorem \ref{prop:NC-C2-C3})

Let $T \ge 2$ be such that  \eqref{eq:ncc2cT1},  \eqref{eq:ncc2cT2}, \eqref{eq:ncc2cT3}, \eqref{eq:ncc2cT4}, and \eqref{eq:ncc2cT5} are satisfied,  by Lemma \ref{lem:Psisfyit12}  and Lemma \ref{lem:Psisfyit3}, 
 $\Psi$ defined in \eqref{eq:famio} satisfies the hypotheses in Theorem \ref{prop:NC-C2}, consequently system \eqref{eq:Sofa-LTI} is not weakly output observable.

Next, we show that system \eqref{eq:Sofa-LTI} is not asymptotically output observable.   For any observer $\hat{S}$,  there is $({\bf u, {y}})$, which corresponds with the initial state $s$ \eqref{eq:showcompleteinit} and the input ${\bf u}$ \eqref{eq:showcompleteinput}, such that for all $ k \in \mathbb{Z}_+$, 
$
\hat{y}_t \neq y_t,  t = kT.
$
Let $\delta = \min\{\|y_1 - y_2\| : y_1 \neq y_2, y_1, y_2 \in \mathcal{Y}\}$, and define $\gamma = \frac{\delta}{2\|u^*\|} > 0$. For any $N \in \mathbb{Z}_+$ and $N \ge 2$,
$
\sum_{t=T+1}^{NT} \|y_t - \hat{y}_t\| - \gamma \|u_t\| = \sum_{k=2}^N(\sum_{t = (k-1)T+1}^{kT} (\|y_t - \hat{y}_t\| - \gamma \|u_t\| )).
$
Recall \eqref{eq:showcompleteinput}, \eqref{eq:utkjshowcompprop}, $\sum_{t = (k-1)T+1}^{kT} \|u_t\| = \|u^*\|$. Since $\hat{y}_t \neq y_t, t = kT$, $(\sum_{t = (k-1)T+1}^{kT} \|y_t - \hat{y}_t\|) \ge \|y_{kT}- \hat{y}_{kT}\| \ge \delta.$ Therefore
$
\sum_{t = (k-1)T+1}^{kT} (\|y_t - \hat{y}_t\| - \gamma \|u_t\| ) \ge \delta - \frac{\delta}{2\|u^*\|} \|u^*\| = \frac{\delta}{2}.
$
Consequently, 
$
\sum_{t=T+1}^{NT} \|y_t - \hat{y}_t\| - \gamma \|u_t\| \ge (N-1) \frac{\delta}{2}, \ \forall \ N \ge 2.
$
Therefore, $\sup_{N \ge 2} \sum_{t=T+1}^{NT} \|y_t - \hat{y}_t\| - \gamma \|u_t\| = \infty$. Since $\mathcal{U}, \mathcal{Y}$ are finite sets, $ \sum_{t=0}^{T} \|y_t - \hat{y}_t\| - \gamma \|u_t\|$ is finite, and consequently  $\sup_{T \ge 0} \sum_{t=0}^{T} \|y_t - \hat{y}_t\| - \gamma \|u_t\| = \infty$. By definition,  $\gamma = \frac{\delta}{2\|u^*\|} > 0$ is not an observation gain bound of system \eqref{eq:Sofa-LTI}. Recall Definition \ref{def:obsr-gain}, we see that for any $\gamma' \in \mathbb{R}_{\ge 0}$, if $\gamma' < \gamma$, then $\gamma'$ is not an observation gain bound. Recall \eqref{eq:o-gain}, we see that the  $\mathcal{O}$-gain $\gamma^*$ of system \eqref{eq:Sofa-LTI} satisfies $\gamma^* \ge \gamma > 0$. Recall Definition \ref{def:class-C}, system \eqref{eq:Sofa-LTI} is not asymptotically output observable.
\end{proof}

\section{Illustrative Examples}
\label{sec:Eg}

We begin with an example that highlights a subtle distinction  between the concept of finite memory output observability and  observability of LTI systems in the traditional sense.
In particular, in the traditional LTI setting,
the effects of the initial state of a stable LTI system will die down eventually, 
and the question of observability is only interesting for unstable systems.
This is not the case for our class of systems of interest, as a system with stable internal dynamics may retain a recollection of its initial state ``forever" in the output.

\begin{eg}
\label{Eg:Example1}
Consider system \eqref{eq:Sofa-LTI} with parameters: $A = 0.5, B = 1, C = 1, D = 1$, $\mathcal{U} = \{0, \pm 1\}$.
The quantizer $Q$ with $p = 1$ is described: 
\begin{equation}
\label{eq:1d-quantizer}
Q(y)=\left\{ \begin{array}{ll}
 i, \quad \textrm{$y \in [i-0.5, i+0.5)$ for $i \in \mathbb{Z}$ and $|i| \le R$}\\
 \lfloor R \rfloor,  \qquad \quad \! y \ge  \lfloor R \rfloor + 0.5\\
 -\lfloor R \rfloor,  \quad \quad y < - \lfloor R \rfloor - 0.5
 \end{array} \right.
\end{equation}
where {$R \in \mathbb{R}_+$} is a parameter. Let $R = 1$, and (consequently) $\mathcal{Y} = \{0, \pm 1\}$.  

Given $u_t$ and $y_t$ for all $0 \le t \le T$ for some $T \in \mathbb{Z}_+$, and an arbitrary $u_{T+1} \in \mathcal{U}$, we claim that $y_{T+1}$ cannot be uniquely determined. 

To see this claim, assume the contrary and let $u_t = 0$ for $0 \le t \le T-2$, $u_{T-1} = 1$ and $u_{T} = u_{T+1} = 0$. For two distinct initial states $x_0^1 = 0.1$ and $x_0^2 = -0.1$, we use $y_t^1$ and $y_t^2$ to denote the quantized outputs respectively. Then $y_t^1 = y_t^2 = 0$ for $0 \le t \le T-2$, and $y_t^1 = y_t^2 = 1$ for $T-1 \le t \le T$. By assumption we can uniquely determine $y_{T+1}$, which contradicts with $y_{T+1}^1 = 1$ and $y_{T+1}^2 = 0$. \hfill \qedsymbol
\end{eg}

As shown in Example \ref{Eg:Example1}, 
the initial state of system \eqref{eq:Sofa-LTI} impacts the quantized output at arbitrarily large times, 
even though the underlying LTI system is stable. 
Consequently, the question of finite memory output observability remains relevant even when the internal dynamics are stable.

The next two examples are instances of system \eqref{eq:Sofa-LTI} that are finite memory output observable. Nonetheless, the two underlying linear dynamics are (LTI) observable in one example, but not the other. This highlights the fact that there is not direct link between finite memory output observability and observability of the underlying LTI dynamics.
\begin{eg}
\label{eg:e1}
Consider a system \eqref{eq:Sofa-LTI} whose LTI parameters are:
$
A = \left[ \begin{array}{cc}
0.25 & -0.05 \\
0 & 0.2 \end{array} \right],  
B = \left[ \begin{array}{c}
2 \\
1 \end{array} \right], 
C = \left[ \begin{array}{cc}
0.5 & 0 \end{array} \right], 
D = 1. 
$ 
Note that this is a minimal representation.  $\mathcal{U} = \{ 0, 1, -1\}$ and the quantizer $Q(\cdot)$ is defined in  \eqref{eq:1d-quantizer} with $R = 5$. Next we  show this system satisfies the hypotheses in Theorem \ref{prop:SC-C1-G}.
\end{eg}

We assume that the initial state $x(0)$ of the LTI system is bounded, particularly:
$
\|x_0\|_{\infty} < b
$
for some $b \in \mathbb{R}_+$.

First we find the distance $d(\mathcal{A}, \mathcal{B})$ between the two sets $\mathcal{A}$ and $\mathcal{B}$ defined in \eqref{eq:set-A} and \eqref{eq:set-B}. 
Since $A$ is diagonalizable, 
we have
$
A^n = \left[ \begin{array}{cc}
(1/4)^n & (1/5)^n - (1/4)^n \\
0 & (1/5)^n \end{array} \right], 
$
and 
$
CA^{n}B = 1/2 ((1/5)^n + (1/4)^n), \forall n \in \mathbb{N}.
$
Consequently wee can show that
$
d(\mathcal{A}, \mathcal{B}) = 5/24.
$
This means that the forced response of the underlying LTI system is at least $5/24$ away from any discontinuous point of the quantizer.

Based on the derivation of  Theorem \ref{prop:SC-C1-G}, we construct an observer for this system. Note that $\|x_t\|_{\infty} \le \frac{10} {7} \max\{b, 2\}$,  choose 
$
T = \lceil{\log_4 \frac{72} {7} \max\{b, 2\}}\rceil + 1
$, and construct an observer $\hat{S}$ according to Definition \ref{def:FIIC} in Section \ref{sec:Deri-C1}. By the derivation of  Theorem \ref{prop:SC-C1-G}, we can show that the output $\hat{y}_t$ of $\hat{S}$ satisfies $\hat{y}_t = y_t, \forall \ t \ge T$.
 \hfill \qedsymbol

\begin{eg}
\label{eg:e2}
We present another second order system \eqref{eq:Sofa-LTI} that is also observable. The parameters of the LTI system in \eqref{eq:Sofa-LTI} are:
$
A = \left[ \begin{array}{cc}
2 & 2 \\
0 & 0 \end{array} \right] 
B = \left[ \begin{array}{c}
1 \\
1 \end{array} \right]
C = \left[ \begin{array}{cc}
0 & 1 \end{array} \right]
D = 1, 
$ 
$\mathcal{U} = \{ 0, 1, -1\}$.  The quantizer $Q(\cdot)$ is defined in  \eqref{eq:1d-quantizer}, and $R = 5$. 
\end{eg}

Clearly $CA = {\bf 0}$, so this system satisfies the condition in Theorem \ref{prop:SC-2-C1}. Notice that the solution of $\tilde{y}_t$ is:
$ \tilde{y}_t = u_{t-1} + u_t, \quad \forall \quad t \ge 2$.
It is thus straightforward to construct an observer that achieves $\gamma = 0$, described as follows:
\begin{align*}
q_{t+1} &=  u_t\\
\hat{y}_t &=  Q(q_t + u_t)
\end{align*}
where $t \in \mathbb{N}$, $q_t \in \mathcal{Q}$ and $\mathcal{Q} = \mathcal{U}$. $q_0$ can be arbitrary, say $0$. \hfill \qedsymbol

As discussed in Section \ref{sec:defn-obs}, the definition of finite memory output observability is stronger than that of weak output observability, and we provide a concrete example to show this point. In particular, our next example is an instance of system \eqref{eq:Sofa-LTI} that is weakly output observable but not finite memory output observable.

\begin{eg}
\label{eg:DFM-NZI}
Given system \eqref{eq:Sofa-LTI} with parameters $A = 0.5, B = C = 1, D = 0$, the input set is $\mathcal{U} = \{0, 1, -1\}$, and the initial state $x_0$ satisfies $|x_0| < 2$. The quantizer $Q$ is described by: $Q((-\infty, 0.5)) = 0$, and $Q([0.5, \infty)) = 1$.
\end{eg}

By Theorem \ref{prop:NC-C1}, the system in Example \ref{eg:DFM-NZI} is not finite memory output observable. However, we can design an observer $\hat{S}$ that achieves the observation gain bound $\gamma = 0$. This particular design keeps track of the last two steps of input as well as last three steps of nonzero input of system \eqref{eq:Sofa-LTI}. Therefore this system is weakly output observable. \hfill \qedsymbol

Lastly, we present a one-dimensional system \eqref{eq:Sofa-LTI} that is not  asymptotically output observable. 

\begin{eg}
Consider a system \eqref{eq:Sofa-LTI} with  parameters: $A = 2, B = 1, C = 1, D = 0$. The quantizer $Q$ is described by: $Q((-\infty, 0.5)) = 0, Q([0.5, \infty)) = 1$. The input set is $\mathcal{U} = \{0, 2, -2\}$. 

Let $x^* = 0.5$, $u^* = -2$, and note that the discontinuous point of $Q$ is $\beta = 0.5$, then $Cx^* = 1 \cdot 0.5 = \beta$, $A^2 x^* + Bu^* = -2 + 2^2 \cdot 0.5 = 0$, therefore the hypotheses in Theorem \ref{prop:NC-C2-C3} are satisfied, and consequently this system is not asymptotically output observable. \hfill \qedsymbol
\end{eg}

\section{Conclusions and Future work}
\label{Sec:Conclusions}
In this manuscript, we formulate the notion of observability of systems over finite alphabets in the sense of how well the output of the system could be estimated based on past input and output information. We characterize this proposed notion by deriving both necessary and sufficient conditions of observability in terms of system parameters. For system \eqref{eq:Sofa-LTI}, such conditions involve both the dynamics of the underlying LTI system and the discontinuous points of the quantizer. Regarding future directions, we are interested in investigating the case when $d(\mathcal{A}, \mathcal{B}) = 0$ and $\mathcal{A} \cap \mathcal{B} = \varnothing.$ We also want to pursue a sufficient condition of weak output observability that is weaker than the proposed sufficient conditions of finite memory output observability. 

\addtolength{\textheight}{-3cm}   

\color{black}

\bibliographystyle{ieeetr}
\bibliography{bib1.bib}

\end{document}